\def\inte#1{
\displaystyle\mathop{#1\kern0pt}^\circ }
\DeclareMathOperator{\sign}{sign}
\def\eqdefa{\buildrel\hbox{\footnotesize def}\over =}
\def\virgp{\raise 2pt\hbox{,}}
\def\cdotpv{\raise 2pt\hbox{;}}
\def\eqdefa{\buildrel\hbox{\footnotesize def}\over =}
\def\C{\mathop{\mathbb C\kern 0pt}\nolimits}
\def\DD{\mathop{\mathbb D\kern 0pt}\nolimits}
\def\EE{\mathop{{\mathbb E \kern 0pt}}\nolimits}
\def\K{\mathop{\mathbb K\kern 0pt}\nolimits}
\def\N{\mathop{\mathbb N\kern 0pt}\nolimits}
\def\Q{\mathop{\mathbb Q\kern 0pt}\nolimits}
\def\R{\mathop{\mathbb R\kern 0pt}\nolimits}
\def\SS{\mathop{\mathbb S\kern 0pt}\nolimits}
\def\ZZ{\mathop{\mathbb Z\kern 0pt}\nolimits}
\def\TT{\mathop{\mathbb T\kern 0pt}\nolimits}
\def\P{\mathop{\mathbb P\kern 0pt}\nolimits}
\def\dv{\mbox{div}}
\def\dive{\mathop{\rm div}\nolimits}
\newcommand{\beq}{\begin{equation}}
\newcommand{\eeq}{\end{equation}}
\newcommand{\ben}{\begin{eqnarray}}
\newcommand{\een}{\end{eqnarray}}
\newcommand{\beno}{\begin{eqnarray*}}
\newcommand{\eeno}{\end{eqnarray*}}
\newtheorem{defi}{Definition}[section]
\newtheorem{thm}{Theorem}[section]
\newtheorem{lem}{Lemma}[section]
\newtheorem{rmk}{Remark}[section]
\newtheorem{col}{Corollary}[section]
\newtheorem{prop}{Proposition}[section]
\renewcommand{\theequation}{\thesection.\arabic{equation}}
\begin{document}
\title[GWP of axisymmetric MHD]
{Low regularity global well-posedness of axisymmetric MHD equations with vertical dissipation and magnetic diffusion}
\author[H.  Abidi]{Hammadi Abidi}
\address[H.  Abidi]{D\'epartement de Math\'ematiques
Facult\'e des Sciences de Tunis
Universit\'e de Tunis EI Manar
2092
Tunis
Tunisia}\email{hammadi.abidi@fst.utm.tn}
\author[G. Gui]{Guilong Gui}
\address[G. Gui]{School of Mathematics and Computational Science, Xiangtan University, Xiangtan 411105, China}\email{glgui@amss.ac.cn}
\author[X. Ke]{Xueli Ke}
\address[X. Ke]{School of Mathematics and Computational Science, Xiangtan University, Xiangtan 411105, China}\email{kexueli123@126.com}

\setcounter{equation}{0}
\maketitle
\begin{abstract}
Consideration in this paper is the global well-posedness for the 3D axisymmetric MHD equations with only vertical dissipation and vertical magnetic diffusion. The existence of unique low-regularity global solutions of the system with initial data in Lorentz spaces is established by using higher-order energy estimates and real interpolation method.
\end{abstract}

\noindent {\sl Keywords:}  Axisymmetric MHD equations; Global well-posedness; Lorentz spaces
\vskip 0.2cm

\noindent {\sl MSC2020:} 35Q30, 76D03


\renewcommand{\theequation}{\thesection.\arabic{equation}}
\setcounter{equation}{0}
\section{Introduction}

We consider herein the 3-D incompressible anisotropic MHD equations 
\begin{equation}\label{1.1}
\begin{cases}
	\partial_{t}u+u\cdot\nabla u-(\nu_h\Delta_h+\nu_z\partial_{z}^{2}) u+\nabla\Pi =B\cdot\nabla B \quad\mbox{in}\quad \R^+\times\R^3,\\
	\partial_{t}B+u\cdot\nabla B-(\mu_h\Delta_h +\mu_z\partial_{z}^{2}) B= B\cdot\nabla u,\\
	\dive u=\dive B= 0,\\
	(u,b)|_{t=0}=(u_{0},B_{0}),
\end{cases}
\end{equation}
where the unknowns $ u$, $B$ and $\Pi$ stand for the velocity of the fluid, the magnetic field and the scalar pressure function respectively. The nonnegative constants $\nu_z$ (or $\nu_h$) and $\mu_z$ (or $\mu_h$) are the vertical (or horizontal) kinematic viscosity coefficient and magnetic diffusive coefficient. In \eqref{1.1}, the usual Laplacians in the classical MHD equations are substituted by the anisotropic Laplacians $\nu_h\Delta_h+\nu_z\partial_{z}^{2}$ and $\mu_h\Delta_h +\mu_z\partial_{z}^{2}$.

The classical 3-D incompressible MHD equations 
\begin{equation}\label{classical-eqns-1}
\begin{cases}
	\partial_{t}u+u\cdot\nabla u-\nu\Delta u+\nabla\Pi =B\cdot\nabla B \quad\mbox{in}\quad \R^+\times\R^3,\\
	\partial_{t}B+u\cdot\nabla B-\mu\Delta B= B\cdot\nabla u,\\
	\dive u=\dive B= 0,\\
	(u,b)|_{t=0}=(u_{0},B_{0}),
\end{cases}
\end{equation}
described the motion of electrically conducting fluids (e.g., astrophysics, geophysics, plasma physics and cosmology, see \cite{FC1999, PA2001, ET2000, G2014}). The existence, uniqueness and regularity of system \eqref{classical-eqns-1} has been extensively studied by many mathematicians recently. For the case that $\nu>0$ and $\mu>0$, it is well-known that Duvaut and Lions \cite{DL1972} proved the local existence and uniqueness of solutions to the $d$-D MHD system in the Sobolev space $H^s(\mathbb{R}^d), s\geq d$. They also obtained the global existence of solutions under the condition for small initial data. Later on,  the global well-posedness of the 2-D MHD system with large initial data has been established by Sermange and Teman \cite{ST1983}. However, in the case in which $\nu$ and $\mu$ are all zero (i.e., the ideal MHD equations), the global well-posedness for the ideal MHD system remains a challenging open problem. Consequently, on the one hand, focuses have been on the equilibrium state for the MHD system \eqref{1.1} with partial dissipation \cite{AZ2017,WZ2021}. On the other hand, there are some mathematical papers \cite{HX2005, WLGS2019, ZG2020} devoting to the global existence of the MHD system some partial regularity results and Serrin-type regularity criteria.

Note that when the initial magnetic field $B_0$ is identically zero, the system \eqref{1.1} reduces to the $3$-D anisotropic incompressible Navier-Stokes system
\begin{equation}\label{Aniso-NS-1}
\begin{cases}
\partial_tu+u\cdot\nabla u-(\nu_h\Delta_h+\nu_z\partial_z^2) u+\nabla \Pi=0\quad\mbox{in}\quad \R^+\times\R^3, \\
\dive u=0,\\
u|_{t=0}=u_0,
  \end{cases}
\end{equation}
which has been extensively studied by many mathematicians recently (see \cite{CDGG2000, DI2-2002, Paicu2005, CZ2007, gui2010}). In particular, for the case where  $\nu_h>0$ and $\nu_z=0$ the system \eqref{Aniso-NS-1} has been studied for the first time by Chemin et al. in \cite{CDGG2000}.  More precisely, the authors have proved in \cite{CDGG2000} the local in time existence of the solution when the initial data belongs to the anisotropic Sobolev space $H^{0,{1\over2}+}$. The global well-posedness was proved for initial data which are small enough compared with horizontal viscosity $\nu_h$, and moreover, the uniqueness of the solution was proved for more regular initial data, belonging to the space $H^{0,{3\over2}+}$, which was removed later by  Iftimie \cite{DI2-2002}. The critical case $s=\frac 12$ was studied by Paicu \cite{Paicu2005}, who proved that the system \eqref{Aniso-NS-1} is globally well-posed for small initial data in the anisotropic Besov space  $\dot{B}^{0,{1\over2}}$ compared with $\nu_h$. Furthermore, Chemin and Zhang \cite{CZ2007} obtained a similar result by working in an anisotropic Besov space with negative regularity indexes in the horizontal variable, which allows them to prove the global existence of the solution for horizontal Navier-Stokes equations with highly oscillating initial data in the horizontal variables. We recall that the main idea in the case where $\nu_h>0$ and $\nu_z=0,$ in order to control the vertical derivative was to use the incompressibility condition, namely $\partial_xu^1+\partial_yu^2+\partial_zu^3=0,$ which allows one to obtain a regularizing effect for the vertical component $u^3$ by using the horizontal viscosity.

Contrarily to the above situation, the case $\nu_h>0$ and $\nu_v=0$ is more difficult to study because of the lack of regularity in two horizontal variables. In fact, utilizing a regularizing effect only in the vertical direction seems very difficult to recover any regularization in all variables in the general case. For this reason, many mathematicians turn to studying the well-posedness of some particular cases, axisymmetric flows for example. 

The vector field $u=u(x_1, x_2, z)$ is axisymmetric ("without swirl", i.e. $u^{\theta}\equiv 0$), if and only if, $u^r$ and $u^z$ do not depend on $\theta$ and
\begin{equation*}
u=u^r(r,z)e_r+u^z(r,z)e_z, 
\end{equation*}
where
\begin{equation*}
e_r=(\frac{x_1}{r}, \frac{x_2}{r}, 0), \quad e_\theta=(-\frac{x_2}{r}, \frac{x_1}{r}, 0), \quad e_z=(0, 0, 1),\quad r=\sqrt{x_1^2+x_2^2},\quad \theta=\arctan \frac{x_2}{x_1}.
\end{equation*}
A scalar function is called axisymmetric if it has no dependencies on the angular variable  $\theta$.

Indeed, for axisymmetric solutions, we have ${\mathop{\rm div}}\,u=\partial_ru^r+{u^r\over r}+\partial_zu^z=0.$ In the case without swirl, Ukhovskii and Yudovich \cite{UI1968}  studied the global regularity of weak solutions of the axisymmetric Navier-Stokes equations  applying the global regularity of the vorticity and the global {\it a priori } estimate $\|r^{-1}\omega\|_{L^r}\leq\|r^{-1}\omega_{0}\|_{L^r}$ for $r \in [1, +\infty]$.  Later on, Leonardi et al. \cite{LMNP1999} and Abidi  \cite{A2008} independently weakened the regularity assumption for $u_{0}\in H^2(\mathbb{R}^3)$ and $u_{0}\in H^{\frac{1}{2}}(\mathbb{R}^3)$. Furthermore, Abidi and Paicu \cite{AP2018} improved the regularity assumption to $\omega_{0}\in L^{\frac{3}{2},1}(\mathbb{R}^3)$ and $r^{-1}\omega_{0}\in L^{\frac{3}{2},1}(\mathbb{R}^3)$.  The recent breakthrough is from Elgindi \cite{Elgindi2019} on the singularity formation of the 3D Euler equation without swirl with $C^{1, \alpha}$ initial data for the velocity. Other results of axisymmetric Navier-Stokes equations can be found in \cite{HFZ2017, D2016, ZZ2014}.

Similarly, the axisymmetric "without swirl" MHD system in this paper means that the solution of the system \eqref{classical-eqns-1} has the form
\begin{equation}\label{initial-axi-1}
u(t,x_1, x_2, z)=u^r(t,r,z)e_r+u^z(t,r,z)e_z,\quad  B(t, x_1, x_2, z)=B^\theta(t,r,z)e_\theta.
\end{equation} 
The global well-posedness of the  axisymmetric "without swirl" MHD equations \eqref{classical-eqns-1} (with $\nu>0$ and $\mu=0$) was established by Lei \cite{Lei2015} for the initial data $\left(u_0, B_0\right)\in H^{s}(\mathbb{R}^{3})$, $s\geq2$ and $\frac{B^\theta_0}{r}\in {L^{\infty}}$. Recently, Ai and Li \cite{XL2022} weakened the condition to $(u_0,B_0)\in H^1(\mathbb{R}^{3})\times H^{2}(\mathbb{R}^{3})$ and   $\frac{\omega_0}{r}\in {L^{2}}$.  For regularity criteria for the axisymmetric MHD solutions, one may refer to \cite{Y2018, WG2022, ZJ2022} and the references cited therein.
 
Consider the case that the anisotropic Laplacians $\nu_h\Delta_h+\nu_z\partial_{z}^{2}$ and $\mu_h\Delta_h +\mu_z\partial_{z}^{2}$ have only vertical viscosity and magnetic diffusion, that is, $\nu_h=\mu_h=0$ and $\nu_z>0,\,\mu_z>0$, the system \eqref{1.1} reads as
\begin{equation}\label{1.2}
	\begin{cases}
	\partial_{t}u+u\cdot\nabla u-\nu_z\partial_{z}^{2} u+\nabla\Pi =B\cdot\nabla B \quad\mbox{in}\quad \R^+\times\R^3,\\
	\partial_{t}B+u\cdot\nabla B-\mu_z\partial_{z}^{2}B=B\cdot\nabla u,\\
	\dive u=\dive B= 0,\\
	(u, B)|_{t=0}=(u_{0}, B_{0}),
	\end{cases}
	\end{equation}
and its corresponding axisymmetric "without swirl" MHD system can be reformulated as
\begin{equation}\label{1.3}
\begin{cases}
\partial_{t} u^{r}+(u^{r}\partial_{r}+ u^{z}\partial_{z})u^{r}+\partial_{r} \Pi
-\partial_{z}^{2}u^r=-\frac{(B^{\theta})^{2}}{r},\\
\partial_t u^z+(u^{r}\partial_{r}+ u^{z}\partial_{z})u^{z}+\partial_{z}\Pi-\partial_{z}^{2} u^{z}=0,\\
\partial_t B^{\theta} +(u^{r}\partial_{r} + u^{z}\partial_{z}) B^{\theta}-\partial_{z}^{2} B^{\theta} =\frac{u^{r} B^{\theta}}{r},\\
\partial_{r} u^{r}+\frac{u^{r}}{r}+\partial_{z} u^{z}=0.
\end{cases}
\end{equation}
For the initial data $(u_0, B_0)\in H^{2}(\mathbb{R}^{3})$, and $\frac{B^\theta_0}{r}\in L^2\cap L^{\infty}(\mathbb{R}^{3})$, Wang and Guo \cite{WG2022} established the existence of the unique global axisymmetric solutions to the system \eqref{1.2}.

We remark that in previous works, all well-posedness results established require the initial data with high regularity. From this, the Lipschitz norm of the velocity $u$ is locally integrable with respect to time $t$ in $\mathbb{R}^+$, which ensures the propagation of the regularity of the solution. A natural and important question is whether a corresponding well-posedness result can be obtained for low-regularity initial data. This kind of result may be helpful to understand the possible blow-up mechanism of the solution to the system \eqref{1.2}, and shows that the model is applicable for general data without high regularity.

Our aim is to establish a family of low regularity global unique solutions to the axisymmetric "without swirl" MHD equations \eqref{1.2}. Notice that the vorticity $\nabla\times u=\omega^{\theta}e_{\theta}$ with $\omega^{\theta}\eqdefa \partial_{z}u^{r}-\partial_{r}u^{z}$. Setting $\omega\eqdefa \omega^{\theta}$ and $b\eqdefa B^{\theta}$, we know from \eqref{1.3} that $(\omega, \,b)$ satisfies
\begin{equation}\label{1.4}
\begin{cases}
&\partial_{t} \omega+(u\cdot\nabla)\omega-\partial_{z}^{2}\omega=-\partial_{z}(\frac{b^{2}}{r})+\frac{ u^{r}}{r}\omega,\\
&\partial_t b +(u\cdot\nabla)b-\partial_{z}^{2} b =\frac{u^{r} }{r}\,b,\\
&u=(-\Delta)^{-1} \nabla\times (\omega\,e_{\theta}),
\end{cases}
\end{equation}
where the operator $u\cdot\nabla\eqdefa u^r\partial_r+ u^z\partial_z$. 

Our main result is given as follows.

\begin{thm}\label{thm1.1}
Let  the initial data $(\omega_{0},b_{0})$ satisfy
\begin{equation}\label{initial-exist-1}
\omega_{0},\,r^{-1} \omega_0 \in L^{\frac{3}{2},1},\quad b_{0}\in L^{3, 2},\quad r^{-1} b_{0}\in L^{\frac{3}{2}, 1}\cap L^{3, 2}.
\end{equation}
 Let $u_0$ be an axisymmetric
solenoidal vector-field with vorticity $\omega_0 e_\theta$ which is given by the Biot-Savart law:
$$
u_0(X)={1\over4\pi}\int_{\mathbb{R}^3}\frac{(X-Y)
\times(\omega_0e_{\theta})(Y)}
{\vert X-Y\vert^3}\,dY,
$$
and $B_0$ be an axisymmetric solenoidal vector-field with the form $B_0=b_0\,e_\theta$.
Then, the system \eqref{1.2} has a global in time axisymmetric solution  $(u, B)$ such that the vorticity  $\omega$ and the magnetic field $be_{\theta}$ satisfy
\begin{equation*} 
\begin{split}
&\omega,\,r^{-1}\omega,\,r^{-1}b \in C \bigl(\mathbb{R}_+;\,L^{{3\over 2},1}(\mathbb{R}^3)\bigr),
\quad
\partial_z\omega,\,r^{-1}\partial_z\omega,\,r^{-1}\partial_zb  \in L^2_{loc}\bigl(\mathbb{R}_+;\,L^{{3\over 2},1}(\mathbb{R}^3)\bigr),
\\
& b, \,r^{-1}b\in C\bigl(\mathbb{R}_+;\,L^{3,2}(\mathbb{R}^3)\bigr).
\end{split}
\end{equation*}
Moreover, if, in addition, the initial data $(\omega_{0},b_{0})$ satisfies
\begin{equation}\label{initial-unique-1}
\omega_0\in L^{3, 1},\quad
\partial_r\omega_0\in L^{{3\over2}}, \quad  b_{0},\,r^{-1}b_{0} \in \dot{H}^1,
\end{equation}
then the vorticity  $\omega$ and the magnetic field $be_{\theta}$ also satisfy
\begin{align*}
&\omega\in C\bigl(\mathbb{R}_+;\,L^{3, 1}(\mathbb{R}^3)\bigr),\quad\partial_r\omega\in C\bigl(\mathbb{R}_+;\,L^{3\over 2}(\mathbb{R}^3)\bigr),\quad
\partial_z\partial_r\omega\in L^2_{loc}\bigl(\mathbb{R}_+;\,L^{3\over 2}(\mathbb{R}^3)\bigr),\\
&(\nabla b,\nabla\frac{b}{r})\in C\bigl(\mathbb{R}_+;\,L^{2}(\mathbb{R}^3)\bigr),\quad
(\partial_z\nabla b,\partial_z\nabla\frac{b}{r})\in L^2_{loc}\bigl(\mathbb{R}_+;\,L^2(\mathbb{R}^3)\bigr),
\end{align*}
and the solution is unique.
\end{thm}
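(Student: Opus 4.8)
The plan is to recast \eqref{1.4} in terms of the rescaled unknowns $\Gamma\eqdefa r^{-1}\om$ and $\Sigma\eqdefa r^{-1}b$. Dividing the first two equations of \eqref{1.4} by $r$, using $u\cdot\grad r=u^{r}$ (so that the stretching term $r^{-1}u^{r}\om$ is absorbed into the material derivative) and $r^{-1}b^{2}=r\Sigma^{2}$, one obtains
\begin{equation*}
\pa_{t}\Sigma+u\cdot\grad\Sigma-\pa_{z}^{2}\Sigma=0,\qquad
\pa_{t}\Gamma+u\cdot\grad\Gamma-\pa_{z}^{2}\Gamma=-\pa_{z}(\Sigma^{2}).
\end{equation*}
The decisive structural fact is that $\Sigma$ solves a \emph{source-free} transport--diffusion equation with divergence-free drift, so its solution operator is an $L^{p}$-contraction for every $p\in(1,\infty)$; this decouples the bound for $\Sigma$ from the rest of the system. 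The velocity and the coefficient $r^{-1}u^{r}$ are reconstructed from $\om$ via the axisymmetric Biot--Savart law, for which I will use the scale-invariant bounds $\|u\|_{L^{3,1}}\lesssim\|\om\|_{L^{3/2,1}}$ and $\|r^{-1}u^{r}\|_{L^{3/2,1}}\lesssim\|r^{-1}\om\|_{L^{3/2,1}}$, the latter expressing that $r^{-1}\om\mapsto r^{-1}u^{r}$ is a zero-order operator bounded on the Lorentz scale. I would work in the meridian half-plane $\{(r,z):r>0\}$ with weight $r\,dr\,dz$, where the reduction is essentially two-dimensional.

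For existence I would regularize \eqref{1.2} by adding a horizontal viscosity $\e\D_{h}$ and mollifying the data, producing smooth global approximate solutions; since the vertical-diffusion and horizontal-viscosity terms have favorable signs when tested against $|f|^{p-2}f$, the bounds below are uniform in $\e$. They are closed in the order: (i) for frozen drift the solution operator of the $\Sigma$-equation is an $L^{p}$-contraction on the whole range $1<p<\infty$, so real interpolation gives $\Sigma\in\bornva{\R_{+}}{L^{3/2,1}\cap L^{3,2}}$ with the vertical gain $\pa_{z}\Sigma\in L^{2}_{loc}(\R_{+};L^{3/2,1})$; (ii) for $\Gamma$, testing against $|\Gamma|^{p-2}\Gamma$, integrating the source by parts in $z$ and using Young's inequality give the heuristic $\tfrac{d}{dt}\|\Gamma\|_{L^{p}}^{2}\lesssim\|\Sigma\|_{L^{2p}}^{4}$, whose right-hand side at $p=3/2$ is controlled through the Lorentz product law $\|\Sigma^{2}\|_{L^{3/2,1}}\lesssim\|\Sigma\|_{L^{3,2}}^{2}$, so that the corresponding frozen-drift linear estimates, valid on the full range of $L^{p}$, interpolate to $\Gamma\in\bornva{\R_{+}}{L^{3/2,1}}$ with $\pa_{z}\Gamma\in L^{2}_{loc}(\R_{+};L^{3/2,1})$; (iii) the analogous estimates for $b$ and for $\om=r\Gamma$, whose coupling terms are handled by the same Lorentz product laws and Biot--Savart bounds, yield $b,\,r^{-1}b\in\bornva{\R_{+}}{L^{3,2}}$ and $\om,\,r^{-1}\om\in\bornva{\R_{+}}{L^{3/2,1}}$. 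Reaching the endpoint index $q=1$ by interpolating the frozen-drift linear estimates across a range of $L^{p}$ is the real-interpolation mechanism announced in the abstract.

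With these $\e$-uniform bounds I would extract a weak-$\star$ limit and pass to $\e\to0$ by an Aubin--Lions argument --- the vertical parabolic gains supplying compactness in $z$, local Rellich compactness handling the meridian variables, and the equations controlling $\pa_{t}$ --- to obtain a global solution with the first-part regularity; time-continuity in the Lorentz norms follows from the Duhamel representation and the strong continuity of the vertical heat semigroup. For uniqueness I would first propagate the higher regularity \eqref{initial-unique-1}: $L^{2}$ energy estimates on $\grad b$ and $\grad\Sigma$ give $(\grad b,\grad\tfrac{b}{r})\in\bornva{\R_{+}}{L^{2}}$ with vertical gains in $L^{2}_{loc}(\R_{+};L^{2})$, while differentiating the $\om$-equation yields $\om\in\bornva{\R_{+}}{L^{3,1}}$, $\pa_{r}\om\in\bornva{\R_{+}}{L^{3/2}}$ and $\pa_{z}\pa_{r}\om\in L^{2}_{loc}(\R_{+};L^{3/2})$. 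These furnish, through Biot--Savart and the $H^{1}$-regularity of the magnetic field, enough control on $\grad u$ (a time-integrable Lipschitz or log-Lipschitz bound) to estimate the difference $(\delta u,\delta b)$ of two solutions in $L^{2}$: one places derivatives on the more regular factor, exploits $\dive u=\dive b=0$ and the vertical diffusion to absorb the terms lacking horizontal smoothing, and closes by Gronwall's (or Osgood's) inequality.

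The main obstacle is that the diffusion acts in the vertical variable alone, so there is \emph{no} horizontal smoothing to spend on the nonlinearities: each critical quantity must be closed algebraically, through the sharp Lorentz product laws and the axisymmetric Biot--Savart structure (notably the boundedness of $r^{-1}\om\mapsto r^{-1}u^{r}$ on $L^{3/2,1}$), with the vertical heat flow providing only the half-derivative in $z$ that tames $\pa_{z}(\Sigma^{2})$. Making these endpoint estimates interlock --- so that the source in the $\Gamma$-equation, the coupling in the $b$-equation and the reconstruction of $(u,r^{-1}u^{r})$ all live in mutually compatible Lorentz spaces --- and then reaching the limiting index $q=1$ by real interpolation is the delicate heart of the existence proof; recovering the unweighted field $b$ in $L^{3,2}$, where the naive energy estimate does not close under vertical-only diffusion, is the sharpest point, and the same anisotropy is what makes the $L^{2}$ stability estimate in the uniqueness part require careful placement of derivatives rather than a plain Lipschitz-transport bound.
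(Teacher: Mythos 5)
Your overall architecture (rescaled unknowns $\omega/r$, $b/r$; source-free transport--diffusion for $b/r$; frozen-drift linear estimates interpolated to reach the Lorentz endpoint; regularize, uniform bounds, compactness) matches the paper's, but two load-bearing claims in your plan are wrong, and each breaks a part of the proof. In the existence part, you propose to control the coefficient $r^{-1}u^r$ by the zero-order bound $\|r^{-1}u^r\|_{L^{3/2,1}}\lesssim\|r^{-1}\omega\|_{L^{3/2,1}}$. That is not the estimate the argument needs. The equations for the \emph{unscaled} quantities $\omega$, $b$ and $b^2/r$ (which you must also bound, since they appear in the statement of Theorem \ref{thm1.1}) carry the stretching terms $\frac{u^r}{r}\omega$ and $\frac{u^r}{r}b$, and to close a Gronwall loop in $L^{3/2,1}$, resp.\ $L^{3,2}$, the product must land back in the same space; by O'Neil's product law this forces $u^r/r\in L^\infty$, since with $u^r/r$ in any Lorentz space of finite first exponent the product drops to strictly lower integrability and the estimate never self-closes. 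The paper's mechanism is exactly the conversion of the parabolic gain $\partial_z(\omega/r)\in L^2_t(L^{3/2,1})$ into $\|u^r/r\|_{L^1_t(L^\infty)}\lesssim \sqrt{t}\,\|\partial_z(\omega/r)\|_{L^2_t(L^{3/2,1})}$ via the limiting-case Biot--Savart estimate $\|u^r/r\|_{L^\infty}\lesssim\|\partial_z(\omega/r)\|_{L^{3/2,1}}$ of Proposition \ref{prop2.2}, which produces the factor $e^{CA_0(t)}$ in Proposition \ref{prop3.2}; your step (iii) has no substitute for it. Relatedly, your step (ii) (``integrate the source by parts in $z$ and use Young'') only works for $p\ge2$: the dissipation is $\|\partial_z|\Omega|^{p/2}\|_{L^2}^2$, and for $p<2$ the integration by parts produces the negative power $|\Omega|^{p-2}$, which cannot be H\"olderized. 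Since reaching $L^{3/2,1}$ by real interpolation forces you to use some exponent $p<3/2<2$, you need the paper's additional device: the evolution equation $\partial_t\Gamma^2+u\cdot\nabla\Gamma^2-\partial_z^2\Gamma^2=-2(\partial_z\Gamma)^2$ together with Lemma \ref{lem2.2}, giving $\|\partial_z(\Gamma^2)\|_{L^1_t(L^p)}\lesssim\sqrt{t}\,\|\Gamma_0^2\|_{L^p}$, so the source is estimated without any integration by parts.

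In the uniqueness part, you assert that the propagated higher norms yield ``a time-integrable Lipschitz or log-Lipschitz bound'' on $\nabla u$ and then run an $L^2$ stability estimate. No such bound is available at this regularity level --- this is precisely the obstruction the paper emphasizes (lack of control of $u$ in $L^1_{loc}(\mathbb{R}^+;\mathrm{Lip})$). Propositions \ref{prop3.3}--\ref{prop3.5} give $\omega\in L^\infty_t(L^{3,1})$, $\partial_r\omega\in L^\infty_t(L^{3/2})$, $\partial_z\partial_r\omega\in L^2_t(L^{3/2})$ and $\nabla b,\nabla(b/r)\in L^\infty_t(L^2)$; through Proposition \ref{prop2.2} these control $\|u\|_{L^\infty}$ and some entries of $\nabla u$, but not $\partial_ru^z$ or $\partial_zu^r$ in $L^\infty$, and $\omega\in L^{3,1}$ alone only gives $\nabla u\in L^{3,1}$, far from Lipschitz or even log-Lipschitz. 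The paper circumvents this by estimating the difference at the vorticity level: $\delta\omega$ in $L^p$ with $\frac65\le p<\frac32$, paired with $\delta b$ in $L^{\frac{6p}{6-p}}$, exploiting the vertical dissipation of the difference system itself (e.g.\ $\|\delta u^r\|_{L^{3p/(3-2p)}}\lesssim\|\partial_z\delta\omega\|_{L^p}$) and the fact that the coefficient functionals $\mathcal{F}_1,\dots,\mathcal{F}_5$ built from the higher-order norms vanish as $t\to0^+$, which yields uniqueness on a short interval and then globally by bootstrap. As written, your uniqueness argument fails at its first step.
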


\begin{rmk} 
Theorem \ref{thm1.1} coincides with the primary conclusion of the Navier-Stokes equations in \cite{AP2018} if the initial magnetic field $B_0\equiv 0$.  In comparison to the result in \cite{WG2022}, the uniqueness of solutions to the MHD equations \eqref{1.3} with the low-regularity initial data in Theorem \ref{thm1.1} is more challenging due to the lack of the control about the velocity $u$ in $L^1_{loc}(\mathbb{R}^+; Lip)$.
\end{rmk}

The proof of Theorem \ref{thm1.1} is completed in Section \ref{sect-4}. We now present a summary of the principal difficulties we encounter in our analysis as well as a sketch of the key ideas used in our proof.

To obtain the existence and uniqueness of regular solutions of the system \eqref{1.2}, we need to establish some higher-order estimates of the velocity field for all $T>0$. Since the system is degenerate along the horizontal direction, it is necessary to establish some new {\it a priori } estimates that overcome the difficulties caused by the lack of smoothing effects in the horizontal direction. To achieve this, some high-order estimates should be obtained from the system \eqref{1.4} about $b$ and the vorticity $\nabla\times u=\omega e_{\theta}$ with $\omega= \partial_{z}u^{r}-\partial_{r}u^{z}$. As in the study of the 3-D axisymmetric Euler equations, for the global existence of the solution to the system \eqref{1.4}, the point is to control the quantity $\|r^{-1} u^r\|_{L^1_t(L^\infty)}$. Indeed, compared with the case in the 3-D axisymmetric Euler equations, the vertical dissipation provides the bound of $\|r^{-1} u^r\|_{L^1_t(L^\infty)}$ by $\|\partial_z{\omega\over r}\|_{L^1_t(L^{{3\over2},1})}$ according to Proposition \ref{prop2.2}.  
Toward this, we introduce the unknowns $(\Omega,\Gamma):=\left(\frac{\omega}{r},\frac{b}{r}\right) $ satisfying
\begin{equation}\label{1.5}
\begin{cases}
\partial_t \Omega+(u\cdot\nabla)\Omega-\partial_{z}^{2}\Omega=-\partial_{z}(\Gamma^{2}),\\
\partial_t\Gamma+(u\cdot\nabla)\Gamma-\partial_{z}^{2}\Gamma=0.
\end{cases}
\end{equation}	
The energy method applied to \eqref{1.5} may give necessary {\it a priori} estimates for the proof of the global existence of the solution to the system \eqref{1.4} with the initial data \eqref{initial-exist-1}. Nevertheless, it's subtle to get the uniqueness of the solution to \eqref{1.4} since the above estimates is not sufficient to ensure the control of the quantity $\|\nabla\,u\|_{L^1_t(L^\infty)}$. Our strategy for proving the uniqueness lies in estimating the system (see \eqref{differ-eqns-1} in Section \ref{sect-4}) satisfied by the differences between two solutions with the same initial data.
Due to the presence of the vertical dissipations in \eqref{differ-eqns-1}, we need only to bound the quantities $\mathcal{F}_i(t)$ with $i=1,...,5$ in Section \ref{sect-4}. Toward this, we can adopt the energy method to get the bounds of these quantities under the assumptions \eqref{initial-unique-1}.

The rest of the paper is organized as follows. In Section \ref{sect-2}, we recall some  properties of the Lorentz spaces and basic lemmas on axisymmetric functions. Section \ref{sect-3} is devoted to some a priori estimates for the system \eqref{1.2}.  Finally, we present the proof of Theorem \ref{thm1.1}  in Section \ref{sect-4}. 
\medbreak \noindent{\bf Notations:} We shall denote
$\int_{\mathbb{R}^3}\cdot dx=2\pi\int_{0}^{\infty}\int_{\mathbb{R}}\cdot rdrdz $.  For $A\lesssim B$, it means that there exists a universal constant $C$, which may vary from line to line, such that $A\leq CB$. Given a Banach space $X$, we shall use $\left(a|b\right)$ to represent the $L^{2}(\mathbb{R}^{3})$ inner product of $a$ and $b$, and $\|\left(a,b \right) \|_{X}=\|a\|_{X}+\|b \|_{X}$. The notation $C_{p}$  is a positive constant depending on $p$.

\renewcommand{\theequation}{\thesection.\arabic{equation}}
\setcounter{equation}{0}

\section{Preliminaries}\label{sect-2}
\setcounter{equation}{0} \vskip .1in
Before to introduce the definition of the Lorentz space, we begin by recalling the rearrangement  of a function. For a measurable function $f$ we define its non-increasing rearrangement  by
 $f^{*}:\mathbb{R}_+\to \mathbb{R}_+$ by
$$
f^{*}(\lambda)\eqdefa \inf\Big\{s\geq0;\,
\big|\{x|\,|f(x)|>s\}\big|\leq\lambda\Big\}.
$$
\begin{defi} (Lorentz spaces, see \cite{Bergh-1976})\label{def2.1}
Let $f$ be a mesurable function and
$1\leq p,\,q\leq\infty.$
Then $f$ belongs to the Lorentz space $L^{p,q}$ if
\begin{equation*}
\|f\|_{L^{p,q}}\eqdefa
\begin{cases}
            \Big( \int^\infty_0(t^{1\over p}f^*(t))^q{dt\over t}\Big)^{1\over q}<\infty&
\text{if}\quad q<\infty,\\
           \sup_{t>0}t^{1\over p}f^*(t)<\infty &\text
             {if}\quad q=\infty.
        \end{cases}
\end{equation*}
\end{defi}
Alternatively, we can define the Lorentz spaces by the real interpolation (see \cite{Bergh-1976}), as the interpolation between Lebesgue spaces:
$$
L^{p,q}\eqdefa(L^{p_0},L^{p_1})_{(\theta,q)},
$$
with $1\le p_0<p<p_1\le\infty,$ $0<\theta<1$ satisfying
${1\over p}={1-\theta\over p_0}+{\theta\over p_1}$ and $1\leq q\leq\infty,$ also $f\in L^{p,q}$ if the following quantity
$$
\|f\|_{L^{p,q}}\eqdefa
\Big(\int_0^\infty\big(t^{-\theta}K(t,f)\big)^q
{dt\over t}\Big)^{1\over q}
$$
is finite with
$$
K(f,t)\eqdefa\displaystyle\inf_{f=f_0+f_1}
\big\{\|f_0\|_{L^{p_0}}+t\|f_1\|_{L^{p_1}}\;\,\big|
\;f_0\in L^{p_0},\,f_1\in L^{p_1}\big\}.
$$
The Lorentz spaces verify the following properties
(see  \cite{ON1963} for more details)~:
\begin{prop}\label{Neil}
Let $f\in L^{p_1,q_1},$ $g\in L^{p_2,q_2}$ and
$1\leq p,q,p_j,q_j\leq\infty$ for $j=1,\,2.$
\begin{itemize}
\item If $1<p<\infty$ and $1\le q\le\infty,$ then
$$
\|fg\|_{L^{p,q}}
\lesssim
\|f\|_{L^{p,q}}\|g\|_{L^{\infty}}.
$$

\item
If ${1\over p}={1\over p_1}+{1\over p_2}$ and
${1\over q}={1\over q_1}+{1\over q_2},$ then
$$
\|fg\|_{L^{p,q}}
\lesssim
\|f\|_{L^{p_1,q_1}}\|g\|_{L^{p_2,q_2}}.
$$

\item If $1<p<\infty$ and $1\le q\le\infty,$ then
$$
\|f\ast g\|_{L^{p,q}}
\lesssim
\|f\|_{L^{p,q}}\|g\|_{L^1}.
$$

\item
If $1<p,\,p_1,\,p_2<\infty,$ ${1\over p}+1={1\over p_1}+{1\over p_2}$ and
${1\over q}={1\over q_1}+{1\over q_2},$ then
$$
\|f\ast g\|_{L^{p,q}}
\lesssim
\|f\|_{L^{p_1,q_1}}\|g\|_{L^{p_2,q_2}},
$$
for $p=\infty,$ and ${1\over q_1}+{1\over q_2}=1,$ then
$$
\|f\ast g\|_{L^\infty}
\lesssim
\|f\|_{L^{p_1,q_1}}\|g\|_{L^{p_2,q_2}}.
$$

\item
For $1\leq p\leq\infty$ and $1\leq q_1\leq q_2\leq\infty,$ we have
$$
L^{p,q_1}\hookrightarrow L^{p,q_2}
\hspace{1cm}\mbox{and}\hspace{1cm}L^{p,p}=L^p.
$$
\end{itemize}
\end{prop}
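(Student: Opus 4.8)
The plan is to prove each item directly from the non-increasing rearrangement $f^*$ and its elementary properties, reserving a real-interpolation argument for the convolution estimates where the rearrangement bookkeeping becomes heavy. For the first item I would observe that $|fg|\le\|g\|_{L^\infty}|f|$ almost everywhere, so the rearrangements satisfy $(fg)^*(t)\le\|g\|_{L^\infty}f^*(t)$, and the claim follows by inserting this into the defining integral of $\|\cdot\|_{L^{p,q}}$. For the H\"older-type estimate (second item) the crucial elementary fact is the bound $(fg)^*(t)\le f^*(t/2)\,g^*(t/2)$, which holds because $\{|fg|>f^*(t/2)g^*(t/2)\}\subset\{|f|>f^*(t/2)\}\cup\{|g|>g^*(t/2)\}$, a set of measure at most $t$. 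Substituting this, rescaling $t\mapsto 2t$, splitting $t^{1/p}=t^{1/p_1}t^{1/p_2}$ via $1/p=1/p_1+1/p_2$, and applying the ordinary H\"older inequality on $(0,\infty)$ in the measure $dt/t$ with the conjugate exponents $q_1/q,\,q_2/q$ (legitimate since $1/q=1/q_1+1/q_2$) yields the product inequality; the borderline cases with an infinite $p_j$ or $q_j$ are handled by replacing the corresponding integral with a supremum.

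For the embedding and the identity in the last item I would argue by monotonicity of $f^*$: since $f^*$ is non-increasing, $(t^{1/p}f^*(t))^{q_1}\le(q_1/p)\int_0^t(s^{1/p}f^*(s))^{q_1}\,ds/s$, which bounds $\|f\|_{L^{p,\infty}}$ by $\|f\|_{L^{p,q_1}}$; combining this with the elementary inequality $\|f\|_{L^{p,q_2}}^{q_2}\le\|f\|_{L^{p,\infty}}^{q_2-q_1}\|f\|_{L^{p,q_1}}^{q_1}$ gives $L^{p,q_1}\hookrightarrow L^{p,q_2}$ for $q_1\le q_2$. The identity $L^{p,p}=L^p$ is then immediate from $\|f\|_{L^{p,p}}^p=\int_0^\infty f^*(t)^p\,dt=\|f\|_{L^p}^p$, using that the rearrangement preserves the $L^p$ norm.

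The convolution estimates are the heart of the matter. For the third item, which is just Young's inequality $L^{p,q}\ast L^1\hookrightarrow L^{p,q}$, I would fix $g\in L^1$ and view $T_gf:=f\ast g$ as a linear operator; the classical Young inequality gives $\|T_g\|_{L^{p_0}\to L^{p_0}}\le\|g\|_{L^1}$ for every $p_0$, so real interpolation between two Lebesgue endpoints, together with the identification $L^{p,q}=(L^{p_0},L^{p_1})_{\theta,q}$ recorded above, yields the bound with constant $\lesssim\|g\|_{L^1}$. The main obstacle is the general O'Neil convolution inequality (fourth item). For its non-endpoint form I would prove O'Neil's rearrangement lemma
$$
(f\ast g)^*(t)\le\frac1t\Big(\int_0^tf^*\,ds\Big)\Big(\int_0^tg^*\,ds\Big)+\int_t^\infty f^*(s)g^*(s)\,ds,
$$
and then control the two resulting terms in the $L^{p,q}$ norm by weighted Hardy integral inequalities, exactly where the conditions $1/p+1=1/p_1+1/p_2$ and $1/q=1/q_1+1/q_2$ are consumed; this is the most delicate part, since one must track the limiting exponents carefully.

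The endpoint case $p=\infty$ is by contrast soft: the Hardy--Littlewood inequality gives the pointwise bound $|f\ast g(x)|\le\int_0^\infty f^*(s)g^*(s)\,ds$, and writing $f^*(s)g^*(s)=(s^{1/p_1}f^*(s))(s^{1/p_2}g^*(s))\,s^{-1}$ (valid since $1/p_1+1/p_2=1$) and applying H\"older in $ds/s$ with $1/q_1+1/q_2=1$ bounds this by $\|f\|_{L^{p_1,q_1}}\|g\|_{L^{p_2,q_2}}$. Since every assertion is classical, the real work is in assembling these rearrangement and Hardy estimates uniformly across the cases where an exponent degenerates to $1$ or $\infty$.
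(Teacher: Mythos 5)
The paper offers no proof of this proposition at all---it is stated as a collection of classical facts with a citation to O'Neil's 1963 paper (and to Bergh--L\"ofstr\"om for the interpolation description)---and your proposal correctly reconstructs exactly the arguments behind that citation: rearrangement submultiplicativity for the pointwise-product estimates, O'Neil's rearrangement lemma combined with weighted Hardy inequalities for the convolution theorem, the Hardy--Littlewood inequality for the endpoint $p=\infty$, real interpolation from the Lebesgue-space Young inequality for the $L^1$-convolution item, and monotonicity of $f^*$ together with equimeasurability for the embeddings and $L^{p,p}=L^p$. The one step you leave as a sketch---the Hardy-inequality control of the two terms produced by O'Neil's lemma in the fourth item---is precisely where the hypotheses $1<p_1,p_2<\infty$ and the exponent relations are consumed, and the tools you name are the correct and standard ones, so this is an accurate outline of the classical proof rather than a gap.
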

Let us recall also the interpolation inequality (see \cite{MICHAE1974}) which allows us to obtain some embeddings of spaces.
\begin{lem}\label{lem2.1}
Let $p_0,\,p_1,\,p,\;q$ in $[1,+\infty]$ and $0<\theta<1.$
\begin{itemize}
\item If $q\le p,$ then
$$
\bigl[L^p(L^{p_0}),L^p(L^{p_1})\bigr]_{(\theta,q)}
\hookrightarrow
L^p\bigl([L^{p_0},L^{p_1}]_{(\theta,q)}\bigr).
$$
\item If $p\le q,$ then
$$
L^p\bigl([L^{p_0},L^{p_1}]_{(\theta,q)}\bigr)
\hookrightarrow
\bigl[L^p(L^{p_0}),L^p(L^{p_1})\bigr]_{(\theta,q)}.
$$

\end{itemize}
\end{lem}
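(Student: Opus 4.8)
The plan is to reduce the comparison of the two interpolation norms to a comparison of mixed Lebesgue norms via Minkowski's integral inequality, after establishing a pointwise-in-$t$ equivalence between the $K$-functional of the couple $(L^p(L^{p_0}),L^p(L^{p_1}))$ and the $L^p$-norm of the pointwise $K$-functionals of $(L^{p_0},L^{p_1})$. Write $A_i=L^{p_i}$ for the inner spaces (over the inner variable, say $y$) and let the outer $L^p$ act on a variable $x$; for $f=f(x,\cdot)$ set $k(t,x):=K(t,f(x);A_0,A_1)$ and $h(t,x):=t^{-\theta}k(t,x)$. Unwinding the definitions in Definition \ref{def2.1}, the norm of $f$ in $L^p([A_0,A_1]_{(\theta,q)})$ is exactly the mixed norm $\|h\|_{L^p_x(L^q_t(dt/t))}$, while the norm of $f$ in $[L^p(A_0),L^p(A_1)]_{(\theta,q)}$ equals $\|t^{-\theta}K(t,f;L^p(A_0),L^p(A_1))\|_{L^q_t(dt/t)}$. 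Thus the whole statement will follow once I (a) compare $K(t,f;L^p(A_0),L^p(A_1))$ with $\|k(t,\cdot)\|_{L^p_x}$ for each fixed $t$, and (b) compare the two mixed norms $\|h\|_{L^q_t L^p_x}$ and $\|h\|_{L^p_x L^q_t}$.

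For the easy half of (a), I would take any admissible splitting $f=f_0+f_1$ with $f_0\in L^p(A_0)$ and $f_1\in L^p(A_1)$; then for a.e.\ $x$ the pointwise splitting $f(x)=f_0(x)+f_1(x)$ is admissible for the couple $(A_0,A_1)$, so $k(t,x)\le\|f_0(x)\|_{A_0}+t\|f_1(x)\|_{A_1}$, and taking the $L^p_x$-norm together with the triangle inequality yields $\|k(t,\cdot)\|_{L^p_x}\le\|f_0\|_{L^p(A_0)}+t\|f_1\|_{L^p(A_1)}$; infimizing over splittings gives $\|k(t,\cdot)\|_{L^p_x}\le K(t,f;L^p(A_0),L^p(A_1))$. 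For the reverse half I would, for each fixed $t$, select a near-optimal pointwise decomposition $f(x)=g_0(x)+g_1(x)$ with $\|g_0(x)\|_{A_0}+t\|g_1(x)\|_{A_1}\le 2k(t,x)$; since $(g_0,g_1)$ is then an admissible global splitting, and since the two summands are each bounded pointwise by $2k(t,x)$, I obtain $K(t,f;L^p(A_0),L^p(A_1))\le\|g_0\|_{L^p(A_0)}+t\|g_1\|_{L^p(A_1)}\le 4\|k(t,\cdot)\|_{L^p_x}$.

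For (b), Minkowski's integral inequality for mixed norms gives $\|h\|_{L^q_t L^p_x}\le\|h\|_{L^p_x L^q_t}$ when $p\le q$, and the reverse inequality when $q\le p$. Combining with (a): if $q\le p$, then $\|f\|_{L^p([A_0,A_1]_{(\theta,q)})}=\|h\|_{L^p_x L^q_t}\le\|h\|_{L^q_t L^p_x}\le\|t^{-\theta}K(t,f;L^p(A_0),L^p(A_1))\|_{L^q_t}=\|f\|_{[L^p(A_0),L^p(A_1)]_{(\theta,q)}}$, where the middle step uses only the easy half of (a); this is the first embedding. If $p\le q$, then $\|f\|_{[L^p(A_0),L^p(A_1)]_{(\theta,q)}}=\|t^{-\theta}K(t,f;L^p(A_0),L^p(A_1))\|_{L^q_t}\lesssim\|h\|_{L^q_t L^p_x}\le\|h\|_{L^p_x L^q_t}=\|f\|_{L^p([A_0,A_1]_{(\theta,q)})}$, where now the first step uses the reverse half of (a); this is the second embedding. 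The endpoints $p,q\in\{1,\infty\}$ are handled by reading the $L^q(dt/t)$ and $L^p_x$ norms as essential suprema, Minkowski's inequality remaining valid.

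The main obstacle is the measurable selection underlying the reverse half of (a): I need the almost-optimal decompositions $g_0(\cdot),g_1(\cdot)$ to depend measurably on $x$, so that they define genuine elements of $L^p(A_0)$ and $L^p(A_1)$. For the concrete couple $(L^{p_0},L^{p_1})$ this can be made explicit: splitting $f(x,\cdot)$ by a level set $\{\,|f(x,\cdot)|>\sigma(t,x)\,\}$ realizes the $K$-functional up to a universal constant and is jointly measurable in $(x,y)$, hence measurable in $x$ for each fixed $t$; alternatively one may invoke a standard measurable-selection argument valid for any Banach couple whose sum is separable. I emphasize that this selection is required \emph{only} for the embedding in the regime $p\le q$ — the regime $q\le p$ rests exclusively on the elementary inequality $\|k(t,\cdot)\|_{L^p_x}\le K(t,f;L^p(A_0),L^p(A_1))$ together with Minkowski's inequality.
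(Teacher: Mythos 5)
The paper itself gives no proof of this lemma --- it is quoted directly from Cwikel \cite{MICHAE1974} --- so the comparison is against the standard argument underlying that citation, and your proposal is exactly that argument: the pointwise $K$-functional comparison $K\bigl(t,f;L^p(A_0),L^p(A_1)\bigr)\approx \bigl\Vert K(t,f(x);A_0,A_1)\bigr\Vert_{L^p_x}$ combined with Minkowski's integral inequality for mixed norms, and it is correct. You also correctly identify the one genuinely delicate point --- that the reverse $K$-functional inequality requires a measurable near-optimal decomposition, available for the concrete couple $(L^{p_0},L^{p_1})$ by truncation at a level given by the rearrangement (Holmstedt), and that this is needed only for the $p\le q$ embedding, the $q\le p$ embedding resting solely on the trivial half of the comparison plus Minkowski.
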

Recall also the definition of  Lebesgue anisotropic spaces. Denote
 the space $L^p_v(\mathbb{R};\,L^q(\mathbb{R}^2))$ by $L^p_v (L^q_h)$ with the norm
$$
\|f\|_{L^p_v (L^q_h)}
\eqdefa\big(\int_{\mathbb{R}}\big(\int_{\mathbb{R}^2}
|f(x,y,z)|^q\,dxdy\big)^{\frac pq}\,dz\big)^{\frac1p}.
$$
Similarly, we denote by $L^q_h(L^p_v)$ the space
$L^q(\mathbb{R}^2;\,L^p(\mathbb{R})),$ with the norm
$$
\|f\|_{L^q_h(L^p_v)}
\eqdefa\big(\int_{\mathbb{R}^2}\big(\int_{\mathbb{R}}
|f(x,y,z)|^p\,dz\big)^{\frac qp}\,dxdy\big)^{\frac1q}.
$$

\begin{lem}\label{lem2.2} (See Lemma 3.1 in \cite{AP2018})
Let $1\leq p\leq2$ and $f\in L^p(\mathbb{R}^n)$ such that
$\partial_i\vert f\vert^{\frac{p}{2}}\in L^2(\mathbb{R}^n).$ Then
\begin{equation}\label{psmall-1}
\Vert \partial_if\Vert_{L^p}
\lesssim
\|\partial_i|f|^{\frac{p}{2}}\|_{L^2}
\| f\|_{L^p}^{\frac{2-p}{2}}.
\end{equation}
\end{lem}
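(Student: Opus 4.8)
The plan is to reduce the inequality to a single pointwise (a.e.) identity and then close with one application of Hölder's inequality; throughout I read $\partial_i f$ as the weak derivative, understood to be a locally integrable function (as it is in every application, and as is forced if the right-hand side is to be finite). Set $g\eqdefa|f|^{\frac p2}$, so that the hypotheses say exactly $g\in L^2$ (since $\|g\|_{L^2}^2=\|f\|_{L^p}^p<\infty$) and $\partial_i g\in L^2$. The heart of the matter is the elementary relation
\[
|\partial_i f|=\frac2p\,|f|^{1-\frac p2}\,\bigl|\partial_i|f|^{\frac p2}\bigr|\qquad\text{a.e.}
\]
Granting it, raising to the power $p$ and integrating gives, with the conjugate exponents $\frac2p$ and $\frac2{2-p}$ (both are $\ge1$ precisely because $1\le p\le2$, and the exponent arithmetic $(p-\tfrac{p^2}2)\cdot\frac2{2-p}=p$ makes the first factor a pure power of $|f|^p$),
\[
\int_{\R^n}|\partial_i f|^p\,dx=\Bigl(\tfrac2p\Bigr)^p\!\int_{\R^n}|f|^{p-\frac{p^2}2}\bigl|\partial_i|f|^{\frac p2}\bigr|^p\,dx\le\Bigl(\tfrac2p\Bigr)^p\Bigl(\int_{\R^n}|f|^p\Bigr)^{\frac{2-p}2}\Bigl(\int_{\R^n}\bigl|\partial_i|f|^{\frac p2}\bigr|^2\Bigr)^{\frac p2}.
\]
Recognizing $\int|f|^p=\|f\|_{L^p}^p$ and the last factor as $\|\partial_i|f|^{\frac p2}\|_{L^2}^p$, taking the $p$-th root yields the claim with constant $\frac2p\le2$, which is universal on $[1,2]$.

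To obtain the pointwise identity I would proceed in two standard steps. First, the classical fact that for a weakly differentiable $f$ one has $\partial_i|f|=\sign(f)\,\partial_i f$ a.e., combined with $\partial_i f=0$ a.e. on $\{f=0\}$, gives $|\partial_i f|=\bigl|\partial_i|f|\bigr|$ a.e. Second, I apply the chain rule to the composition $|f|=\Phi(g)$ with $\Phi(s)=s^{2/p}$: since $1\le p\le2$, $\Phi$ is $C^1$ on $[0,\infty)$ with $\Phi'(s)=\frac2p s^{\frac2p-1}$ bounded near the origin, so $\partial_i|f|=\frac2p\,g^{\frac2p-1}\partial_i g=\frac2p\,|f|^{1-\frac p2}\,\partial_i|f|^{\frac p2}$ a.e. Concatenating the two displays produces the required identity.

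The main technical obstacle is precisely the rigorous justification of this chain rule: $\Phi'$ is bounded near $0$ but grows like $s^{\frac2p-1}$ at infinity, so $\Phi$ is not globally Lipschitz when $p<2$, and the manipulations degenerate on the zero set $\{f=0\}$. I would control the growth at infinity by truncating $g$ at level $N$, applying the Sobolev chain rule for the locally Lipschitz $\Phi$ to the truncated functions and letting $N\to\infty$ by monotone/dominated convergence; and I would dispose of the zero set by noting that for $p<2$ the factor $|f|^{1-\frac p2}$ vanishes there in step with $\partial_i f=0$ a.e.\ on $\{f=0\}$, while $p=2$ reduces to the trivial equality $|\partial_i f|=\bigl|\partial_i|f|\bigr|$. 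A clean alternative that avoids the zero set is to run the entire computation on the smooth regularizations $|f|^{\frac p2}\approx(f^2+\varepsilon^2)^{p/4}$, where every step above is classical, and then pass to the limit $\varepsilon\to0$, keeping the Hölder estimate as the substance of the argument.
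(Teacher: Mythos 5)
Your proof is correct and takes essentially the same route as the paper's source: the paper gives no proof of its own but cites Lemma 3.1 of \cite{AP2018}, whose argument is exactly your pointwise identity $|\partial_i f|=\frac{2}{p}\,|f|^{1-\frac{p}{2}}\bigl|\partial_i|f|^{\frac{p}{2}}\bigr|$ followed by H\"older's inequality with the conjugate exponents $\frac{2}{2-p}$ and $\frac{2}{p}$. Your additional care in justifying the chain rule (working in the direction $g\mapsto g^{2/p}$, truncating or regularizing, and handling the set $\{f=0\}$) is sound and simply makes explicit what the cited proof leaves implicit.
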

Thanks to Proposition 3.1 in \cite{AP2018}, we readily get the following proposition (up to a slight modification).
\begin{prop}\label{prop2.2}
Let $u$ and $b$ be axisymmetric solenoidal vector-field and scalar function respectively with vorticity $\omega=\omega^\theta e_\theta$, which solves the system \eqref{1.4}. Let $(p,q,\lambda)\in[1,\infty]^3,$ then we have
$$
u^r=\omega^\theta=b=0
\quad \mbox{on the axis}\quad r=0,
$$
and the following inequalities :
\vspace*{0,2cm}

\begin{itemize}
\item If ${3\over2}\leq p<\infty$ such that
${1\over q}={1\over3}+{1\over p},$ then
\begin{equation*}
\begin{split}
&\|u\|_{L^{p,\lambda}}\lesssim\|\omega\|_{L^{q,\lambda}},\quad
\|{u^r\over r}\|_{L^{p,\lambda}}\lesssim\|{\omega\over r}\|_{L^{q,\lambda}},\quad
\|\partial_zu^r\|_{L^{p,\lambda}}\lesssim\|\partial_z\omega\|_{L^{q,\lambda}},\\
&\|\partial_zu^z\|_{L^{p,\lambda}}\lesssim\|\partial_z\omega\|_{L^{q,\lambda}}, \quad
\|\partial_zu^z\|_{L^{p,\lambda}} +\|\partial_ru^z\|_{L^{p,\lambda}}\lesssim\|\partial_r\omega\|_{L^{q,\lambda}}+\|{\omega\over r}\|_{L^{q,\lambda}}.
\end{split}
\end{equation*}
\item If $3\leq p<\infty$ such that
${1\over q}={2\over3}+{1\over p},$ then
\begin{equation*}
\begin{split}
&\|u^r\|_{L^{p,\lambda}} \lesssim \|\partial_z\omega\|_{L^{q,\lambda}},\quad \|{u^r\over r}\|_{L^{p,\lambda}}\lesssim\|\partial_z{\omega\over r}\|_{L^{q,\lambda}},\quad\|u^z\|_{L^{p,\lambda}}\lesssim\|\partial_r\omega\|_{L^{q,\lambda}}+\|{\omega\over r}\|_{L^{q,\lambda}},\\
&\|\partial_zu^z\|_{L^{p,\lambda}}\lesssim\|\partial_z\partial_r\omega\|_{L^{q,\lambda}}+\|\partial_z{\omega\over r}\|_{L^{q,\lambda}},\quad \|\partial_ru^r\|_{L^{p,\lambda}}
\lesssim
\|\partial_z\partial_r\omega\|_{L^{q,\lambda}}
+\|\partial_z{\omega\over r}\|_{L^{q,\lambda}}.
\end{split}
\end{equation*}
\item In the limiting case $p=\infty$, there hold
\begin{equation*}
\begin{split}
&\|u\|_{L^\infty} \lesssim \|\omega\|_{L^{3,1}},
\quad \|u^r\|_{L^{\infty}} \lesssim \|\partial_z\omega\|_{L^{{3\over2},1}},
\quad \|{u^r\over r}\|_{L^{\infty}} \lesssim \|\partial_z{\omega\over r}\|_{L^{{3\over2},1}},\\
&\|u^z\|_{L^{\infty}}\lesssim\|\partial_r\omega\|_{L^{{3\over2},1}}+\|{\omega\over r}\|_{L^{{3\over2},1}},
\quad \|\partial_zu^z\|_{L^\infty} \lesssim \|\partial_z\partial_r\omega\|_{L^{{3\over2},1}} +\|\partial_z{\omega\over r}\|_{L^{{3\over2},1}},\\
&\|\partial_ru^r\|_{L^\infty}
\lesssim
\|\partial_z\partial_r\omega\|_{L^{{3\over2},1}}
+\|\partial_z{\omega\over r}\|_{L^{{3\over2},1}}.
\end{split}
\end{equation*}
\end{itemize}
\end{prop}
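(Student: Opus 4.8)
The plan is to observe that every inequality listed concerns only the \emph{static} Biot--Savart relation $u=(-\Delta)^{-1}\curl(\omega\,e_\theta)$ and its $r$-weighted refinements; the magnetic field never enters these relations, so the substance of the proof is exactly the velocity--vorticity estimate of Proposition~3.1 in \cite{AP2018}, the ``slight modification'' being merely the bookkeeping for $b=B^\theta$. First I would dispose of the axis behaviour: writing $B=b\,e_\theta$ and recalling that $e_\theta=(-x_2/r,x_1/r,0)$ has no limit at $r=0$, local finiteness of the Cartesian field $B$ forces $b\to0$ as $r\to0$, exactly as for $u^re_r$ and $\omega^\theta e_\theta$; hence $u^r=\omega^\theta=b=0$ on $\{r=0\}$. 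This reduces the proposition to the convolution-type bounds for $u$ in terms of $\omega$.

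Next I would introduce the stream function. Since $u$ is axisymmetric, solenoidal and without swirl, write $u=\nabla\times(\psi\,e_\theta)$, so that $-\Delta(\psi\,e_\theta)=\omega\,e_\theta$ and
$$
u^r=-\partial_z\psi,\qquad u^z=\Bigl(\partial_r+\tfrac1r\Bigr)\psi .
$$
Computing $-\Delta(\psi\,e_\theta)$ yields the scalar equation $-(\partial_r^2+\tfrac1r\partial_r-\tfrac1{r^2}+\partial_z^2)\psi=\omega$, so $\psi$ is an order-two potential of $\omega$, i.e. $\psi=\mathcal K*\omega$ with $\mathcal K\sim|x|^{-1}$. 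The $r$-weighted quantities are handled by the five-dimensional lifting: setting $\Gamma=\psi/r$, $\Omega=\omega/r$ one finds $-(\partial_r^2+\tfrac3r\partial_r+\partial_z^2)\Gamma=\Omega$, which is precisely $-\Delta_5\Gamma=\Omega$ for functions radial in $x'\in\R^4$ (with $r=|x'|$), together with $u^r/r=-\partial_z\Gamma$. Thus each listed quantity is a fixed homogeneous-kernel operator applied to $\omega$, to one of its first derivatives, or to $\omega/r$ and its derivatives.

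The estimates then follow by matching homogeneities and invoking O'Neil's Lorentz convolution inequalities (Proposition~\ref{Neil}). An order-one gain ($u$, $\partial_zu^r$, $\partial_zu^z$, and $u^r/r$ from $\omega/r$) corresponds to a kernel $\sim|x|^{-2}\in L^{3/2,\infty}(\R^3)$; the convolution bound with $\tfrac1q=\tfrac13+\tfrac1p$ gives the first family, and the exponent is confirmed by the scaling $u^r/r\mapsto(u^r/r)(\lambda\cdot)$ versus $\omega/r\mapsto\lambda\,(\omega/r)(\lambda\cdot)$. An order-two gain ($u^r$, $u^z$, $\partial_zu^z$, $\partial_ru^r$, and $u^r/r$ from $\partial_z(\omega/r)$) corresponds to the Newtonian kernel $\sim|x|^{-1}\in L^{3,\infty}(\R^3)$, yielding $\tfrac1q=\tfrac23+\tfrac1p$. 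The zero-order pieces (where $\partial_ru^z,\partial_zu^z$ are controlled by $\partial_r\omega+\omega/r$) are Calder\'on--Zygmund-type operators, bounded on $L^{q,\lambda}$. Finally the limiting $p=\infty$ bounds use the endpoint O'Neil inequality: pairing $|x|^{-2}\in L^{3/2,\infty}$ with $\omega\in L^{3,1}$ (conjugate first indices $3/2,3$ and second indices $\infty,1$) gives $\|u\|_{L^\infty}\lesssim\|\omega\|_{L^{3,1}}$, while pairing $|x|^{-1}\in L^{3,\infty}$ with $\partial_z\omega\in L^{3/2,1}$ gives $\|u^r\|_{L^\infty}\lesssim\|\partial_z\omega\|_{L^{3/2,1}}$, and similarly for the weighted endpoints through the five-dimensional kernels.

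The hard part will be the bookkeeping for the $r$-weighted quantities. One must pass between the $\R^3$ measure $r\,dr\,dz$ and the $\R^5$ measure $r^3\,dr\,dz$, so that the $\R^5$ Hardy--Littlewood--Sobolev relation (which on its own would read $\tfrac1q=\tfrac15+\tfrac1p$) is converted, via the factor $r^2$ encoded in the extra $r^{-1}$ weights of $\Omega,\Gamma$, into the stated $\R^3$ relation $\tfrac1q=\tfrac13+\tfrac1p$; the scaling check above guarantees consistency, but the second Lorentz index must be tracked exactly through $\tfrac1q=\tfrac1{q_1}+\tfrac1{q_2}$ at the endpoints. The anisotropic mixed-derivative bounds (e.g. $\partial_ru^r$ controlled by $\partial_z\partial_r\omega+\partial_z(\omega/r)$) add a further layer, since they combine a zero-order singular integral with the weighted five-dimensional representation; these are precisely the places where the argument of \cite{AP2018} must be reproduced with care rather than merely quoted.
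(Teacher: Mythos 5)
Your proposal is, in substance, what the paper itself does: the paper's entire proof of Proposition~\ref{prop2.2} is the one-line reduction to Proposition~3.1 of \cite{AP2018} ``up to a slight modification'', and your opening observation --- that every listed inequality concerns only the static Biot--Savart relation, so that $b$ enters only through the elementary axis statement --- is exactly that modification. Your further sketch of the inside of the cited result (stream function, five-dimensional lifting of $\omega/r$, O'Neil's inequalities from Proposition~\ref{Neil}, commuting $\partial_z$ with the Biot--Savart operator) is also the standard route.

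There is, however, one step in your sketch that is not ``bookkeeping'' and would not survive as written: the conversion of the $\mathbb{R}^5$ potential estimates into $\mathbb{R}^3$ Lorentz inequalities. Since $p\neq q$ in every item, rewriting both sides as weighted $\mathbb{R}^5$ norms produces \emph{mismatched} cylindrical weights $r^{-2/p}$ and $r^{-2/q}$, i.e.\ a Stein--Weiss-type problem with negative total weight exponent $2/p-2/q=-2/3$; the factor $r^{-1}$ already spent in forming $\Omega=\omega/r$ and $\Gamma=\psi/r$ is what makes these functions $\mathbb{R}^5$-harmonic potentials and is not available again to absorb the measure discrepancy, and scaling consistency, which you invoke, is necessary but proves nothing. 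What actually closes this step is a pointwise kernel comparison: for $f\geq 0$ axisymmetric, viewed on the left as radial in the first four of five variables,
\begin{equation*}
\int_{\mathbb{R}^5}\frac{f(y)}{|x-y|^{4}}\,dy\lesssim\int_{\mathbb{R}^3}\frac{f(y)}{|x-y|^{2}}\,dy,
\qquad
\int_{\mathbb{R}^5}\frac{f(y)}{|x-y|^{3}}\,dy\lesssim\int_{\mathbb{R}^3}\frac{f(y)}{|x-y|}\,dy,
\end{equation*}
which one checks by comparing the $S^3$ and $S^1$ angular integrals near and away from the diagonal; after this, O'Neil applies directly in $\mathbb{R}^3$ and yields the stated numerology $\tfrac1q=\tfrac13+\tfrac1p$, resp.\ $\tfrac1q=\tfrac23+\tfrac1p$, including the $p=\infty$ endpoints. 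Alternatively one can avoid $\mathbb{R}^5$ altogether via the representation already recorded as Proposition~\ref{prop2.3} (\cite{MZ2012}), $\frac{u^r}{r}=\partial_z\Delta^{-1}\frac{\omega}{r}-2\frac{\partial_r}{r}\Delta^{-1}\partial_z\Delta^{-1}\frac{\omega}{r}$, treating $\frac{\partial_r}{r}\Delta^{-1}$ as a zero-order operator on axisymmetric functions. Two smaller points: the axis argument requires continuity of $B$ (mere local finiteness does not force $b\to0$); and the bounds $\|\partial_ru^z\|_{L^{p,\lambda}}+\|\partial_zu^z\|_{L^{p,\lambda}}\lesssim\|\partial_r\omega\|_{L^{q,\lambda}}+\|\omega/r\|_{L^{q,\lambda}}$ are not zero-order Calder\'on--Zygmund estimates (note $p\neq q$): they follow from $-\Delta u^z=\partial_r\omega+\frac{\omega}{r}$ and the $|x|^{-2}$ kernel, so they belong to your order-one-gain family.
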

The proposition given below can be found in \cite{MZ2012}, which we will use in the proof of higher order estimates of $(u, b)$.
\begin{prop}\label{prop2.3}(\cite{MZ2012})
Let $u$ be a free divergence axisymmetric vector-field without swirl and $\omega=\nabla\times u$. Then there hold
$$\frac{u^r}{r}=\partial_z\Delta^{-1}(\frac{\omega}{r})-2\frac{\partial_r}{r}\Delta^{-1}\partial_z\Delta^{-1}(\frac{\omega}{r})$$
and 
$$\|\partial_{z}(\frac{u^r}{r})\|_{L^p}\leq C\|\frac{\omega}{r}\|_{L^p},\quad 1<p<\infty.$$
\end{prop}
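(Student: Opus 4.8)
The plan is to reduce the whole statement to the scalar Poisson equation and then to Calderón–Zygmund theory. Throughout write $\omega=\omega^\theta$ and $\Omega=\omega/r$. Since $u$ is axisymmetric, divergence free and without swirl, it derives from a stream function: there is a scalar $\psi=\psi(r,z)$ with $u=\nabla\times(\psi e_\theta)$, so that $u^r=-\partial_z\psi$, and taking the curl gives the relation $\omega=-(\Delta-r^{-2})\psi$, where $\Delta=\partial_r^2+\frac1r\partial_r+\partial_z^2$ is the axisymmetric scalar Laplacian. First I would set $\Phi:=\psi/r$, so that $u^r/r=-\partial_z\Phi$. A one–line computation based on $\Delta(r\Phi)=r\Delta\Phi+2\partial_r\Phi+\Phi/r$ converts the relation into
\[
\Omega=-\widetilde\Delta\,\Phi,\qquad \widetilde\Delta:=\Delta+\frac2r\partial_r=\partial_r^2+\frac3r\partial_r+\partial_z^2,
\]
where $\widetilde\Delta$ is precisely the five–dimensional radial Laplacian. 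Thus $u^r/r$ solves $\widetilde\Delta(u^r/r)=\partial_z\Omega$, and the task becomes inverting $\widetilde\Delta$ while expressing the answer through the three–dimensional $\Delta^{-1}$.

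To obtain the representation formula I would split $\widetilde\Delta=\Delta+\frac2r\partial_r$ and treat the $\frac2r\partial_r$ piece as a correction. Setting $\phi_0:=\Delta^{-1}\Omega$ and $\chi:=\Delta^{-1}\partial_z\phi_0=\Delta^{-1}\partial_z\Delta^{-1}\Omega$, the asserted identity reads
\[
\frac{u^r}{r}=\partial_z\phi_0-2\frac{\partial_r}{r}\chi=\partial_z\Delta^{-1}\Omega-2\frac{\partial_r}{r}\Delta^{-1}\partial_z\Delta^{-1}\Omega.
\]
Rather than summing a Neumann series, I would simply verify that the right–hand side solves $\widetilde\Delta(\cdot)=\partial_z\Omega$; the verification rests on the two elementary identities, valid for axisymmetric $g$, $\widetilde\Delta(g/r)=\frac1r(\Delta g-g/r^2)$ and $\Delta\partial_r g=\partial_r\Delta g+\frac1{r^2}\partial_r g$. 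Applying $\widetilde\Delta$ to $\partial_z\phi_0$ yields $\partial_z\Omega+\frac2r\partial_r\partial_z\phi_0$, while applying it to $-2\frac{\partial_r}{r}\chi$ yields exactly $-\frac2r\partial_r\partial_z\phi_0$; the two correction terms cancel, leaving $\partial_z\Omega$. Equality of $u^r/r$ with the right–hand side then follows from uniqueness of decaying solutions of $\widetilde\Delta F=\partial_z\Omega$.

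For the $L^p$ bound I would differentiate the formula in $z$; since $\partial_z$ commutes with $\Delta^{-1}$ and with $\frac{\partial_r}{r}$,
\[
\partial_z\Big(\frac{u^r}{r}\Big)=\partial_z^2\Delta^{-1}\Omega-2\frac{\partial_r}{r}\Delta^{-1}\big(\partial_z^2\Delta^{-1}\Omega\big).
\]
The operator $\partial_z^2\Delta^{-1}$ is the zeroth–order Fourier multiplier $\xi_z^2/|\xi|^2$, hence bounded on $L^p$ for $1<p<\infty$ by the Mikhlin–H\"ormander theorem; this controls the first term and shows $h:=\partial_z^2\Delta^{-1}\Omega$ obeys $\|h\|_{L^p}\lesssim\|\Omega\|_{L^p}$. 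It then remains only to prove the single estimate $\|\frac{\partial_r}{r}\Delta^{-1}h\|_{L^p}\lesssim\|h\|_{L^p}$ for axisymmetric $h$, after which the second term is dominated by $\|h\|_{L^p}\lesssim\|\Omega\|_{L^p}$ and the proof closes. I expect this last estimate to be the main obstacle. Although $\frac{\partial_r}{r}\Delta^{-1}$ is scaling invariant of order zero (writing $\frac1r\partial_r=\frac{x_1}{r^2}\partial_{x_1}+\frac{x_2}{r^2}\partial_{x_2}$ exhibits it as a convolution–type operator on axisymmetric functions), its kernel is singular along the entire axis $r=0$, so the bound does not follow from scalar Riesz–transform estimates alone. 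The delicate point will be to verify the Calderón–Zygmund size and cancellation conditions uniformly up to the symmetry axis — equivalently, to realise $\frac{\partial_r}{r}\Delta^{-1}$ as a combination of second–order Riesz transforms on $\mathbb{R}^5$ acting on the natural axisymmetric lift of $h$, carefully tracking the $r$–weights when passing between the three– and five–dimensional pictures.
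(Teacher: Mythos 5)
Your derivation of the representation formula is correct and complete: the stream-function reduction $u^r/r=-\partial_z\Phi$, the computation $\Omega=-\widetilde\Delta\Phi$ with $\widetilde\Delta=\partial_r^2+\frac{3}{r}\partial_r+\partial_z^2$, the two commutation identities $\widetilde\Delta(g/r)=\frac1r\bigl(\Delta g-g/r^2\bigr)$ and $\Delta\partial_r g=\partial_r\Delta g+\frac{1}{r^2}\partial_r g$, and the resulting cancellation showing that $\partial_z\Delta^{-1}\Omega-2\frac{\partial_r}{r}\Delta^{-1}\partial_z\Delta^{-1}\Omega$ solves $\widetilde\Delta F=\partial_z\Omega$ all check out, and the identification with $u^r/r$ via uniqueness of decaying solutions is standard. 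Bear in mind that the paper itself gives no proof of Proposition 2.3 (it is quoted from \cite{MZ2012}), so the relevant comparison is with the argument there, which has the same two-step structure as yours.

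There is, however, a genuine gap exactly at the point you yourself flag as ``the main obstacle'': the estimate $\bigl\|\frac{\partial_r}{r}\Delta^{-1}h\bigr\|_{L^p}\lesssim\|h\|_{L^p}$ for axisymmetric $h$ is never proved; you only conjecture it and sketch a possible proof through Riesz transforms on $\mathbb{R}^5$. Since the term $\partial_z^2\Delta^{-1}\Omega$ is handled by a routine multiplier bound, this unproven estimate carries the entire content of the $L^p$ inequality, so as written the proof is incomplete at its crux. The gap admits an elementary fix, which is precisely how the cited reference closes the argument: for any smooth axisymmetric $g$ one has the pointwise identity
$$
\frac{1}{r}\partial_r g=\frac{x_2^2}{r^2}\partial_1^2 g-\frac{2x_1x_2}{r^2}\partial_1\partial_2 g+\frac{x_1^2}{r^2}\partial_2^2 g,
$$
i.e.\ $\frac1r\partial_r g$ is the second derivative of $g$ along the horizontal tangential direction (expand the Cartesian second derivatives in polar form: the coefficients of $\partial_r^2 g$ cancel and those of $\frac1r\partial_r g$ sum to one). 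The coefficients are bounded by $1$, so applying this with $g=\Delta^{-1}h$ gives
$$
\Bigl\|\frac{\partial_r}{r}\Delta^{-1}h\Bigr\|_{L^p}\leq 4\max_{i,j}\|\partial_i\partial_j\Delta^{-1}h\|_{L^p}\leq C_p\|h\|_{L^p},\quad 1<p<\infty,
$$
by Calder\'on--Zygmund theory for the second-order Riesz transforms $\partial_i\partial_j\Delta^{-1}$ --- no lift to $\mathbb{R}^5$ and no kernel analysis near the axis is needed, so the singularity of the coefficients at $r=0$ that worried you is harmless. With this lemma inserted, your argument becomes a complete proof of the proposition.
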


\renewcommand{\theequation}{\thesection.\arabic{equation}}
\setcounter{equation}{0}

\section{A prior estimates}\label{sect-3}
\setcounter{equation}{0} \vskip .1in
\begin{prop}\label{prop3.1}
Assume that $1<p<\infty$, $(\Omega_{0},\Gamma_{0})\in L^{p}\times L^{2p}$, $u$ and $b$ are regular axisymmetric such that $\dv \,u=0$. Let $\Omega \eqdefa \frac{\omega}{r}\in L_{t}^{\infty}(L^{p})$ and $\Gamma\eqdefa\frac{b}{r}\in L_{t}^{\infty}(L^{p})$ be regular solutions of the system \eqref{1.5}. Then there are
\begin{align}\label{3.1}
\|\Gamma(t)\|_{ L^{p}}+C_p\|\partial_{z}|\Gamma|^{\frac{p}{2}}\|_{L_{t}^{2}(L^{2})}^{\frac{2}{p}}\leq\|\Gamma_{0}\|_{ L^{p}},
\end{align}
and
\begin{align}\label{3.2}
\|\Omega(t)\|_{L^{p}}+C_p\|\partial_{z}|\Omega|^{\frac{p}{2}}\|_{L_{t}^{2}(L^{2})}^{\frac{2}{p}}\le C\bigl(\|\Omega_{0}\|_{L^{p}}+\sqrt{t}\|\Gamma_{0}^2\|_{L^{p}}\bigr).
\end{align}
\end{prop}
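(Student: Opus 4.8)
The plan is to prove both bounds by the $L^p$ energy method applied directly to the system \eqref{1.5}, establishing \eqref{3.1} first since it will feed into \eqref{3.2}. Throughout, all integrals are taken against the cylindrical measure $2\pi\,r\,dr\,dz$, and two structural facts do the bookkeeping: the divergence-free relation $\partial_r u^r+\frac{u^r}{r}+\partial_z u^z=0$, and the vanishing $u^r=0$ on the axis $r=0$ provided by Proposition \ref{prop2.2} (which kills the boundary term when integrating by parts in $r$).

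\textbf{Estimate \eqref{3.1} for $\Gamma$.} I would test the $\Gamma$-equation against $|\Gamma|^{p-2}\Gamma$ and integrate. The transport term is $\int (u\cdot\nabla\Gamma)\,|\Gamma|^{p-2}\Gamma = \frac1p\int u\cdot\nabla|\Gamma|^p$, which vanishes after integration by parts because of the two structural facts above. The dissipation term, integrated by parts in $z$, gives
$$
-\int \partial_z^2\Gamma\,|\Gamma|^{p-2}\Gamma\,dx=(p-1)\int |\Gamma|^{p-2}(\partial_z\Gamma)^2\,dx=\frac{4(p-1)}{p^2}\big\|\partial_z|\Gamma|^{\frac p2}\big\|_{L^2}^2 .
$$
Hence $\frac1p\frac{d}{dt}\|\Gamma\|_{L^p}^p+\frac{4(p-1)}{p^2}\|\partial_z|\Gamma|^{p/2}\|_{L^2}^2=0$, and integrating in time produces the $p$-th power form of \eqref{3.1}, from which the stated form follows by elementary manipulation. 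The same computation with exponent $2p$ in place of $p$ yields the control $\|\Gamma(t)\|_{L^{2p}}\le\|\Gamma_0\|_{L^{2p}}$, and since $\|\Gamma_0\|_{L^{2p}}^2=\|\Gamma_0^2\|_{L^p}$ this is exactly the quantity that will appear in \eqref{3.2}. This is why the hypothesis requires $\Gamma_0\in L^{2p}$.

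\textbf{Estimate \eqref{3.2} for $\Omega$.} I would test the $\Omega$-equation against $|\Omega|^{p-2}\Omega$; the transport term vanishes and the dissipation term again yields $\frac{4(p-1)}{p^2}\|\partial_z|\Omega|^{p/2}\|_{L^2}^2$. The new object is the source $-\partial_z(\Gamma^2)$. Integrating it by parts in $z$ gives $(p-1)\int \Gamma^2|\Omega|^{p-2}\partial_z\Omega\,dx$; rewriting $|\Omega|^{p-2}\partial_z\Omega$ in terms of $\partial_z|\Omega|^{p/2}$, then using Cauchy--Schwarz together with Hölder's inequality $\int \Gamma^4|\Omega|^{p-2}\,dx\le\|\Gamma\|_{L^{2p}}^4\|\Omega\|_{L^p}^{p-2}$, the source is bounded by a constant times $\|\partial_z|\Omega|^{p/2}\|_{L^2}\,\|\Gamma\|_{L^{2p}}^2\,\|\Omega\|_{L^p}^{(p-2)/2}$. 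Young's inequality then absorbs $\frac{2(p-1)}{p^2}\|\partial_z|\Omega|^{p/2}\|_{L^2}^2$ into the left-hand side, leaving $C_p\|\Gamma\|_{L^{2p}}^4\|\Omega\|_{L^p}^{p-2}$. Invoking $\|\Gamma\|_{L^{2p}}^4\le\|\Gamma_0^2\|_{L^p}^2$ from the previous step gives
$$
\frac1p\frac{d}{dt}\|\Omega\|_{L^p}^p+\frac{2(p-1)}{p^2}\big\|\partial_z|\Omega|^{\frac p2}\big\|_{L^2}^2\le C_p\,\|\Gamma_0^2\|_{L^p}^2\,\|\Omega\|_{L^p}^{p-2}.
$$
Dropping the dissipation and dividing by $\|\Omega\|_{L^p}^{p-2}$ turns this into $\frac{d}{dt}\|\Omega\|_{L^p}^2\le C_p'\|\Gamma_0^2\|_{L^p}^2$, so that $\|\Omega(t)\|_{L^p}^2\le\|\Omega_0\|_{L^p}^2+C_p'\,t\,\|\Gamma_0^2\|_{L^p}^2$, which is precisely the $\sqrt t$ growth in \eqref{3.2}; reinserting the dissipation and integrating in time recovers the full statement.

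The main obstacle is the source term for $\Omega$: the system carries only vertical dissipation, so the same single coercive quantity $\|\partial_z|\Omega|^{p/2}\|_{L^2}$ must both survive on the left and absorb the contribution of $-\partial_z(\Gamma^2)$, which forces the passage to the $L^{2p}$ level for $\Gamma$ and the particular combination $\|\Gamma\|_{L^{2p}}^2\|\Omega\|_{L^p}^{(p-2)/2}$ after Hölder. A secondary technical point is that the Hölder split $\int\Gamma^4|\Omega|^{p-2}$ is transparent for $p\ge2$ but needs the exponent $|\Omega|^{p/2}$ to be carried carefully for $1<p<2$; this is the one place where I would slow down and argue in terms of $|\Omega|^{p/2}$ rather than $|\Omega|^{p-2}$ directly.
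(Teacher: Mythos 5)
Your estimate for $\Gamma$ and your treatment of the case $p\ge 2$ are correct and coincide with the paper's own argument (its ``Case 2''): integrate the source by parts in $z$, Cauchy--Schwarz against $\partial_z|\Omega|^{p/2}$, H\"older to produce $\|\Gamma^2\|_{L^p}\|\Omega\|_{L^p}^{(p-2)/2}$, Young, then divide by $\|\Omega\|_{L^p}^{p-2}$ to get the linear-in-$t$ growth of $\|\Omega\|_{L^p}^2$. Your use of $\|\Gamma(t)\|_{L^{2p}}\le\|\Gamma_0\|_{L^{2p}}$ with $\|\Gamma_0\|_{L^{2p}}^2=\|\Gamma_0^2\|_{L^p}$ is a harmless streamlining of the paper's equivalent bound \eqref{3.8}. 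The genuine gap is the range $1<p<2$, which you dismiss as a ``secondary technical point.'' There your route fails structurally, not notationally: after the integration by parts the source is $\frac{2(p-1)}{p}\int\Gamma^2\,|\Omega|^{(p-2)/2}\sign(\Omega)\,\partial_z|\Omega|^{p/2}\,dx$, and any pairing of $\partial_z|\Omega|^{p/2}\in L^2$ against the remaining factor requires $\int\Gamma^4|\Omega|^{p-2}\,dx<\infty$. For $p<2$ the exponent $p-2$ is negative, so this integral can be infinite wherever $\Omega$ vanishes on a set where $\Gamma\ne 0$, and there is no H\"older inequality bounding it by $\|\Gamma\|_{L^{2p}}^4\|\Omega\|_{L^p}^{p-2}$ (that would be a reverse H\"older with a negative exponent). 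No rearrangement of the exponents ``in terms of $|\Omega|^{p/2}$'' avoids the negative power once you have integrated by parts.

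The paper's proof of the case $1<p\le2$ rests on a different idea that your proposal is missing: do \emph{not} integrate the source by parts. Instead bound it by H\"older as $\|\partial_z\Gamma^2\|_{L^p}\|\Omega\|_{L^p}^{p-1}$, which gives $\|\Omega(t)\|_{L^p}\le\|\Omega_0\|_{L^p}+\int_0^t\|\partial_z\Gamma^2\|_{L^p}\,d\tau$, and then control $\|\partial_z\Gamma^2\|_{L^2_t(L^p)}$ by a separate parabolic estimate: $\Gamma^2$ solves $\partial_t\Gamma^2+u\cdot\nabla\Gamma^2-\partial_z^2\Gamma^2=-2(\partial_z\Gamma)^2$ with a sign-favourable right-hand side, so $\|\Gamma^2\|_{L^\infty_t(L^p)}+\|\partial_z|\Gamma^2|^{\frac p2}\|_{L^2_t(L^2)}^{\frac2p}\lesssim\|\Gamma_0^2\|_{L^p}$, and Lemma \ref{lem2.2} --- valid precisely because $p\le2$ --- converts this into $\|\partial_z\Gamma^2\|_{L^2_t(L^p)}\lesssim\|\Gamma_0^2\|_{L^p}$; Cauchy--Schwarz in time then yields the $\sqrt t$ factor in \eqref{3.2}. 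Note that your own $L^{2p}$ bound on $\Gamma$ already controls $\|\partial_z|\Gamma|^{p}\|_{L^2_t(L^2)}=\|\partial_z|\Gamma^2|^{\frac p2}\|_{L^2_t(L^2)}$, so the missing case is within reach of your ingredients --- but only along the paper's no-integration-by-parts route. Finally, the gap is not peripheral: Corollary \ref{cro3.1} and the main theorem work in $L^{\frac32,1}$, which Remark \ref{rmk-1-2} reaches by real interpolation of Lebesgue exponents below $2$, so $1<p<2$ is exactly the range the paper needs.
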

\begin{proof}
Let's first control $\Gamma$ in Lebesgue spaces. For $1<p<\infty$, multiplying the second equation of \eqref{1.5} by $|\Gamma|^{p-1}\sign(\Gamma)$, and then integrating the result equation, we obtain from $\dv\, u=0$ that
\begin{align}\label{3.3}
\frac{1}{p}\frac{\mathrm{d}}{\mathrm{d}t}\|\Gamma\|_{ L^{p}}^{p}+\frac{4(p-1)}{p^{2}}\|\partial_{z}|\Gamma|^{\frac{p}{2}}\|_{L^{2}}^{2}=0,
\end{align}
which yields \eqref{3.1}.

In order to control $\Omega$ in Lebesgue spaces, we will split it into two cases: $1<p\leq2$ and $2\leq p<\infty$.\\

{\bf Case 1 : $1<p\leq2$.}
Multiplying the first equation \eqref{1.5} by $|\Omega|^{p-1}\sign(\Omega)$, and then integrating the result equation, we find
\begin{align}\label{3.5}
\frac{1}{p}\frac{\mathrm{d}}{\mathrm{d}t}\|\Omega\|_{ L^{p}}^{p}+\frac{4(p-1)}{p^{2}}\|\partial_{z}|\Omega|^{\frac{p}{2}}\|_{L^{2}}^{2}\leq\int_{\mathbb{R}^3}-\partial_{z}\Gamma^2|\Omega|^{p-1}\sign(\Omega)\,dx,
\end{align}
which implies
\begin{align}\label{3.5-11}
\frac{1}{p}\frac{\mathrm{d}}{\mathrm{d}t}\|\Omega\|_{ L^{p}}^{p}+\frac{4(p-1)}{p^{2}}\|\partial_{z}|\Omega|^{\frac{p}{2}}\|_{L^{2}}^{2}\leq\|\partial_{z}\Gamma^2\|_{L^p}\|\Omega\|_{L^p}^{p-1}.
\end{align}
Hence, there holds
\begin{align}\label{3.6}
\|\Omega(t)\|_{L^{p}}+C_{p}\|\partial_{z}|\Omega|^{\frac{p}{2}}\|_{L_{t}^{2}(L^{2})}^{\frac{2}{p}}\leq \|\Omega_{0}\|_{L^{p}}+\int_{0}^{t}\|\partial_{z}\Gamma^2\|_{L^{p}}\mathrm{d}\tau.
\end{align}
In order to close the above inequality, we may obtain the equation of $\Gamma^2$ from the $\Gamma$-equation in \eqref{1.5} that
\begin{equation}\label{3.7}
\partial_t\Gamma^2+(u\cdot\nabla)\Gamma^2-\partial_{z}^{2}\Gamma^2=-2(\partial_{z}\Gamma)^2.
\end{equation}	
Similar to the argument in \eqref{3.5}, we achieve
\begin{align}\label{3.8}
\|\Gamma^{2}(t)\|_{ L^{p}}^{p}+C_{p}\|\partial_{z}|\Gamma^2|^{\frac{p}{2}}\|_{L_{t}^{2}(L^{2})}^{2}\leq\|\Gamma^2_{0}\|_{ L^{p}}^{p}.
\end{align}
Thanks to \eqref{psmall-1}, we have
\begin{align*}
\int_{0}^{t}\|\partial_{z}\Gamma^2\|_{L^{p}}^2\,d\tau\leq C\|\partial_{z}|\Gamma^2|^{\frac{p}{2}}\|_{L_{t}^{2}(L^2)}^2\|\Gamma^2\|_{L_t^\infty(L^p)}^{2-p}\leq C\|\Gamma_{0}^2\|_{L^{p}}^2.
\end{align*}
and then, we get, for $1<p \leq 2$, 
\begin{align}\label{3.1-aa}
\|\Gamma^{2}(t)\|_{ L^{p}}+C_{p}\|\partial_{z}\Gamma^2\|_{L_{t}^{2}(L^{p})}\leq C\|\Gamma^2_{0}\|_{ L^{p}},
\end{align}
and
\begin{align}\label{3.9}
\|\partial_{z}\Gamma^2\|_{L^1_t(L^{p})} \leq \sqrt{t}\|\partial_{z}\Gamma^2\|_{L^2_t(L^{p})} \leq C \sqrt{t}\|\Gamma_{0}^2\|_{L^{p}}.
\end{align}
Inserting \eqref{3.9} into \eqref{3.6} implies \eqref{3.2}.

{\bf Case 2: $2\leq p<+\infty$.}
In view of \eqref{3.5}, we have
\begin{align}\label{3.11}
&\frac{\mathrm{d}}{\mathrm{d}t}\|\Omega\|_{ L^{p}}^{p}+\frac{4(p-1)}{p}\|\partial_{z}|\Omega|^{\frac{p}{2}}\|_{L^{2}}^{2}
\leq\frac{2p-2}{p}|\int_{\mathbb{R}^3}\Gamma^2(\partial_{z}|\Omega|^{\frac{p}{2}})|\Omega|^{\frac{p-2}{2}}\mathrm{d}x|,
\end{align}
which leads to
\begin{align}\label{3.11-11}
&\frac{\mathrm{d}}{\mathrm{d}t}\|\Omega\|_{ L^{p}}^{p}+\frac{4(p-1)}{p}\|\partial_{z}|\Omega|^{\frac{p}{2}}\|_{L^{2}}^{2}
\leq C_{p}\|\partial_{z}|\Omega|^{\frac{p}{2}}\|_{L^2}\|\Gamma^2\|_{L^p}\|\Omega\|_{L^p}^{\frac{p-2}{2}}.
\end{align}
Thence, Young's inequality implies
\begin{align}\label{3.11-22}
&\frac{\mathrm{d}}{\mathrm{d}t}\|\Omega\|_{ L^{p}}^{p}+\frac{2(p-1)}{p}\|\partial_{z}|\Omega|^{\frac{p}{2}}\|_{L^{2}}^{2}
\leq C\|\Gamma^2\|_{L^p}^2\|\Omega\|_{L^p}^{p-2}.
\end{align}
Combining \eqref{3.11-22} with \eqref{3.8}, one obtains \eqref{3.2}.

Therefore, we finish the proof of Proposition \ref{prop3.1}.
\end{proof}

\begin{rmk}\label{rmk-1-1}
For $1<p \leq 2$, thanks to \eqref{psmall-1}, we have
\begin{equation*}
  \|\partial_{z}\Gamma\|_{L^{p}} \lesssim \|\partial_{z}|\Gamma|^{\frac{p}{2}}\|_{L^{2}}  \|\Gamma\|_{L^{p}}^{\frac{2-p}{2}},\quad \|\partial_{z}\Omega\|_{L^{p}} \lesssim \|\partial_{z}|\Omega|^{\frac{p}{2}}\|_{L^{2}}  \|\Omega\|_{L^{p}}^{\frac{2-p}{2}},
\end{equation*}
from which, together with \eqref{3.1} and \eqref{3.2}, one has
\begin{equation}\label{1-1-rmk1}
\begin{split}
& \|\Gamma(t)\|_{ L^{p}}+\|\partial_{z} \Gamma\|_{L_{t}^{2}(L^{p})}\leq C\|\Gamma_{0}\|_{ L^{p}},\\
  &\|\Gamma^{2}(t)\|_{ L^{p}}+\|\partial_{z}\Gamma^2\|_{L_{t}^{2}(L^{p})}\leq C\|\Gamma^2_{0}\|_{ L^{p}},\\
   &\|\Omega(t)\|_{L^{p}}+\|\partial_{z}\Omega\|_{L^2_t(L^{p})} \leq C\bigl(\|\Omega_{0}\|_{L^{p}}+\sqrt{t}\|\Gamma_{0}^2\|_{L^{p}}\bigr).
\end{split}
\end{equation}
\end{rmk}

\begin{rmk}\label{rmk-1-2}
We denote  by ${\mathcal{T}}$ and ${\mathcal{S}}$ the following linear operators:
$$
\begin{aligned}
{\mathcal{T}}:\hspace{1cm} &L^p\longrightarrow L^p
\hspace{2cm}
{\mathcal{S}}:\hspace{1cm} L^p\longrightarrow L^2_t(L^p)
\\&
\Omega_0\longmapsto \Omega
\hspace{3,75cm}
\Omega_0\longmapsto \partial_z\Omega,
\end{aligned}
$$
with $\Omega$ solution of the system \eqref{1.5}.
By definition, we know that ${\mathcal{T}}$ and ${\mathcal{S}}$ are linear operators, then thanks to Propositions \ref{Neil} and \ref{prop3.1}, Lemmas \ref{lem2.1} and \ref{lem2.2}, and Remark \ref{rmk-1-1}, we obtain for $1< p\leq 2$, $1\leq q\leq p$,
\begin{equation}\label{est-Lpq-1}
\begin{split}
&\|\Omega(t)\|_{L^{p,q}}
+\Big\|\partial_z\Omega
\Big\|_{L^2_t(L^{p,q})}\leq C\Big(\|\Omega_0\|_{L^{p,q}}+
\sqrt{t}\|\Gamma_{0}\|_{L^{2p,2q}}^2\Big),\\
&\|\Gamma(t)\|_{ L^{p,q}}+\|\partial_{z}\Gamma\|_{L_{t}^{2}(L^{p,q})}\leq C\|\Gamma_{0}\|_{ L^{p,q}},\quad 
\|\Gamma^2(t)\|_{ L^{p,q}}+\|\partial_{z}\Gamma^2\|_{L_{t}^{2}(L^{p,q})}\leq C\|\Gamma_{0}^2\|_{ L^{p,q}}.
\end{split}
\end{equation}
While for $2< p<\infty$, $1\leq q\leq p$, we have
\begin{equation}\label{est-Lpq-2}
\begin{split}
\|\Omega(t)\|_{L^{p,q}}\leq  C\Big(\|\Omega_0\|_{L^{p,q}}+
\sqrt{t}\|\Gamma_{0}\|_{L^{2p,2q}}^2\Big) \quad \mbox{and} \quad \|\Gamma(t)\|_{ L^{p,q}}\leq\|\Gamma_{0}\|_{ L^{p,q}}.
\end{split}
\end{equation}
\end{rmk}
\begin{col}\label{cro3.1}
Assume that  $(\Omega_{0},\Gamma_{0})\in L^{\frac{3}{2},1}\times L^{3, 2}$ and $u$ is a regular axisymmetric vector field such that $div \,u=0$. Let $\Omega\eqdefa\frac{\omega}{r}\in L_{t}^{\infty}(L^{\frac{3}{2},1})$ and $\Gamma\eqdefa\frac{b}{r}\in L_{t}^{\infty}(L^{3,2})$ be a solution of system \eqref{1.5}. Then there hold
\begin{equation}\label{est-Lpq-3}
\begin{split}
&\|\Gamma(t)\|_{ L^{3,2}}+\|\partial_{z}\Gamma^2\|_{L_{t}^{2}(L^{\frac{3}{2},1})}^{\frac{1}{2}}
\leq
\|\Gamma_{0}\|_{ L^{3,2}},\\
&\|\Omega(t)\|_{L^{\frac{3}{2},1}}+\|\partial_{z}\Omega\|_{L_{t}^{2}(L^{\frac{3}{2},1})}\le C\bigl(\|\Omega_{0}\|_{L^{\frac{3}{2},1}}+\sqrt{t}\|\Gamma_{0}\|_{L^{3,2}}^2\bigr).
\end{split}
\end{equation}
In particular,  we have for all $t\geq 0$,
\begin{align}\label{3.17}
\int_{0}^{t}\|\frac{u^r}{r}\|_{L^{\infty}}\mathrm{d}\tau
\lesssim
\int_{0}^{t}\|\partial_{z}\frac{\omega}{r}\|_{L^{\frac{3}{2},1}}\mathrm{d}\tau\le
C\sqrt{t}\bigl(\|\Omega_{0}\|_{L^{\frac{3}{2},1}}+\sqrt{t}\|\Gamma_{0}\|_{L^{3,2}}^2\bigr).
\end{align}
\end{col}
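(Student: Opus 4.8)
The plan is to read off all three estimates in \eqref{est-Lpq-3} from the Lorentz-space bounds \eqref{est-Lpq-1}--\eqref{est-Lpq-2} of Remark \ref{rmk-1-2} by specializing the indices, combined with the product inequality of Proposition \ref{Neil} and the limiting endpoint bound of Proposition \ref{prop2.2}. Since $(\Omega,\Gamma)$ solves \eqref{1.5}, every hypothesis required to invoke those estimates is already in force.

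First I would establish the $\Gamma$-line of \eqref{est-Lpq-3}. The bound $\|\Gamma(t)\|_{L^{3,2}}\leq\|\Gamma_0\|_{L^{3,2}}$ is exactly \eqref{est-Lpq-2} with $(p,q)=(3,2)$, which is admissible since $2<p<\infty$ and $1\leq q\leq p$. For the dissipation term I would work with $\Gamma^2$ rather than $\Gamma$: applying the third inequality of \eqref{est-Lpq-1} with $(p,q)=(\tfrac32,1)$ (admissible as $1<p\leq2$, $1\leq q\leq p$) gives $\|\partial_z\Gamma^2\|_{L^2_t(L^{3/2,1})}\leq C\|\Gamma_0^2\|_{L^{3/2,1}}$. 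It then remains to convert the right-hand side into the $L^{3,2}$ norm of $\Gamma_0$, and this is where the product rule of Proposition \ref{Neil} enters, with $\tfrac{1}{3/2}=\tfrac13+\tfrac13$ and $1=\tfrac12+\tfrac12$, yielding $\|\Gamma_0^2\|_{L^{3/2,1}}\lesssim\|\Gamma_0\|_{L^{3,2}}^2$. Taking square roots produces the stated $\|\partial_z\Gamma^2\|_{L^2_t(L^{3/2,1})}^{1/2}\lesssim\|\Gamma_0\|_{L^{3,2}}$.

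Next I would obtain the $\Omega$-line of \eqref{est-Lpq-3} directly: the first inequality of \eqref{est-Lpq-1} with $(p,q)=(\tfrac32,1)$ reads
\[
\|\Omega(t)\|_{L^{3/2,1}}+\|\partial_z\Omega\|_{L^2_t(L^{3/2,1})}\leq C\bigl(\|\Omega_0\|_{L^{3/2,1}}+\sqrt{t}\,\|\Gamma_0\|_{L^{3,2}}^2\bigr),
\]
since $2p=3$ and $2q=2$ identify $\|\Gamma_0\|_{L^{2p,2q}}=\|\Gamma_0\|_{L^{3,2}}$. Finally, for \eqref{3.17} I would invoke the limiting case $p=\infty$ of Proposition \ref{prop2.2}, namely $\|u^r/r\|_{L^\infty}\lesssim\|\partial_z(\omega/r)\|_{L^{3/2,1}}=\|\partial_z\Omega\|_{L^{3/2,1}}$, integrate in time, and apply Cauchy--Schwarz in the form $\int_0^t\|\partial_z\Omega\|_{L^{3/2,1}}\,d\tau\leq\sqrt{t}\,\|\partial_z\Omega\|_{L^2_t(L^{3/2,1})}$, before substituting the $\Omega$-line just proved.

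There is no genuine analytic obstacle here; the corollary is an assembly of earlier estimates. The only point that demands care is the bookkeeping of Lorentz indices: verifying that the target spaces $L^{3,2}$ and $L^{3/2,1}$ fall within the admissible ranges $1<p\leq2$, $2<p<\infty$, $1\leq q\leq p$ of \eqref{est-Lpq-1}--\eqref{est-Lpq-2}, and that the squaring map $\Gamma_0\mapsto\Gamma_0^2$ sends $L^{3,2}$ into $L^{3/2,1}$ with the correct second exponents through Proposition \ref{Neil}.
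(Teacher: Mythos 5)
Your proposal is correct and follows essentially the same route the paper intends: Corollary \ref{cro3.1} is stated without proof immediately after Remark \ref{rmk-1-2}, and the intended derivation is precisely your assembly --- specializing \eqref{est-Lpq-1} to $(p,q)=(\tfrac32,1)$ and \eqref{est-Lpq-2} to $(p,q)=(3,2)$, converting $\|\Gamma_0^2\|_{L^{3/2,1}}\lesssim\|\Gamma_0\|_{L^{3,2}}^2$ via O'Neil's product law in Proposition \ref{Neil}, and then combining the endpoint bound $\|u^r/r\|_{L^\infty}\lesssim\|\partial_z(\omega/r)\|_{L^{3/2,1}}$ of Proposition \ref{prop2.2} with Cauchy--Schwarz in time. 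The only discrepancy is that your first estimate carries an implicit constant ($\lesssim$) where the corollary writes a bare $\leq$, but that is the paper's own imprecision (the squared-dissipation term cannot come with constant exactly one), not a gap in your argument.
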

\begin{prop}\label{prop3.2}
Let $1<p<\infty$, $\frac{\omega_0}{r}\in L^{\frac{3}{2},1}, \quad \frac{b_{0}}{r}\in L^{3, 2},\quad \omega_{0},\,b_{0},\, \frac{b_{0}^2}{r}\in L^p$.
Assume that $(\omega,\,b)\in L_{t}^{\infty}(L^{p}) \times L_{t}^{\infty}(L^{p})$ is a regular solution of the equations \eqref{1.4}. Then there hold
\begin{align}\label{3.13}
\Vert\omega(t)\Vert_{L^p}
+\Big\Vert\partial_z\vert\omega\vert^{\frac{p}{2}}
\Big\Vert_{L^{2}_{t}(L^2)}^{\frac{2}{p}}
\leq C\big(\|\omega_{0}\|_{L^{p}}+ \sqrt{t}\,\|\frac{b_{0}^2}{r}\|_{L^p} \big)e^{CA_0(t)},
\end{align}
\begin{align}\label{b2r-est-0}
&  \| \frac{b^2}{r}(t)\|_{L^p}  +\Big\Vert\partial_z\vert\frac{b^2}{r}\vert^{\frac{p}{2}}
\Big\Vert_{L^2_t(L^2)}^{\frac{2}{p}}  \leq C\| \frac{b_0^2}{r}\|_{L^p} e^{CA_0(t)},
\end{align}
and
\begin{align}\label{3.14}
\|b(t)\|_{L^{p}}+\|\partial_{z}|b|^{\frac{p}{2}}\|_{L_{t}^{2}(L^{2})}^{\frac{2}{p}}
\le
C\|b_{0}\|_{L^{p}}e^{CA_{0}(t)},
\end{align}
where
$$
A_0(t)\eqdefa\sqrt{t}\bigl(\|\Omega_{0}\|_{L^{\frac{3}{2},1}}+\sqrt{t}\,\|\Gamma_{0}\|_{L^{3,2}}^2\bigr).
$$
\end{prop}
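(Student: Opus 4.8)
The plan is to establish the three bounds by running separate $L^p$ energy estimates for $b$, for $g:=b^2/r$, and for $\omega$, each closed by Grönwall's lemma with the amplification factor supplied by Corollary~\ref{cro3.1}. The point is that \eqref{3.17} already provides $\int_0^t\|u^r/r\|_{L^\infty}\,d\tau\lesssim A_0(t)$, so every amplification term of the form $\int_{\R^3}\frac{u^r}{r}|f|^p\,dx\le\|u^r/r\|_{L^\infty}\|f\|_{L^p}^p$ will be absorbed into a Grönwall factor $e^{CA_0(t)}$. As in Proposition~\ref{prop3.1}, the transport term integrates to zero against $|f|^{p-1}\sign(f)$ thanks to $\dive u=0$, and the vertical dissipation always yields the good term $\frac{4(p-1)}{p^2}\|\partial_z|f|^{p/2}\|_{L^2}^2$. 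For $b$ this is immediate: testing the $b$-equation in \eqref{1.4} against $|b|^{p-1}\sign(b)$ gives
\[
\frac1p\frac{d}{dt}\|b\|_{L^p}^p+\frac{4(p-1)}{p^2}\big\|\partial_z|b|^{\frac p2}\big\|_{L^2}^2\le\Big\|\frac{u^r}{r}\Big\|_{L^\infty}\|b\|_{L^p}^p,
\]
and Grönwall plus \eqref{3.17} gives \eqref{3.14}.

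The preparatory step is to derive a closed equation for $g=b^2/r$. Writing $g=b\,\Gamma$ with $\Gamma=b/r$, combining the $b$-equation with the $\Gamma$-equation from \eqref{1.5}, and using $\partial_zb=r\,\partial_z\Gamma$, a direct computation gives
\[
\partial_t g+(u\cdot\nabla)g-\partial_z^2 g=\frac{u^r}{r}\,g-2r(\partial_z\Gamma)^2.
\]
The crucial observation is that, since $r\ge0$, the extra term $-2r(\partial_z\Gamma)^2$ is nonpositive. Hence, testing against $g^{p-1}$ (note $g\ge0$, so $|g|=g$) and discarding this favorable term leaves precisely the same structure as for $b$; the same Grönwall argument then yields \eqref{b2r-est-0}.

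It remains to estimate $\omega$, which is the main obstacle because its equation in \eqref{1.4} carries the forcing $-\partial_z(b^2/r)$. Testing against $|\omega|^{p-1}\sign(\omega)$ produces the amplification term $\int\frac{u^r}{r}|\omega|^p$ (handled as above) together with the forcing contribution, which I would control in two regimes mirroring Proposition~\ref{prop3.1}. For $1<p\le2$ I bound the forcing by Hölder, $-\int_{\R^3}\partial_z g\,|\omega|^{p-1}\sign(\omega)\,dx\le\|\partial_z g\|_{L^p}\|\omega\|_{L^p}^{p-1}$, then apply Lemma~\ref{lem2.2} (inequality \eqref{psmall-1}) to $g$ and Cauchy--Schwarz in time, so that by \eqref{b2r-est-0}
\[
\int_0^t\|\partial_z g\|_{L^p}\,d\tau\lesssim\sqrt t\,\big\|\partial_z|g|^{\frac p2}\big\|_{L^2_t(L^2)}\,\|g\|_{L^\infty_t(L^p)}^{\frac{2-p}2}\lesssim\sqrt t\,\Big\|\frac{b_0^2}{r}\Big\|_{L^p}e^{CA_0(t)}.
\]
For $2<p<\infty$ I would instead integrate the forcing by parts into $\frac{2(p-1)}{p}\int_{\R^3} g\,|\omega|^{\frac{p-2}2}\sign(\omega)\,\partial_z|\omega|^{\frac p2}\,dx$, then use Hölder and Young to absorb part of the dissipation and be left with $C\|g\|_{L^p}^2\|\omega\|_{L^p}^{p-2}$; dividing by $\|\omega\|_{L^p}^{p-2}$ gives a Grönwall inequality for $\|\omega\|_{L^p}^2$ whose source $\int_0^t\|g\|_{L^p}^2\,d\tau\le t\,\|g\|_{L^\infty_t(L^p)}^2$ is again controlled by \eqref{b2r-est-0}. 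In both regimes Grönwall produces the $\sqrt t$ prefactor together with the combined exponential $e^{CA_0(t)}$, and the dissipation norm $\|\partial_z|\omega|^{p/2}\|_{L^2_t(L^2)}^{2/p}$ is recovered by retaining the good term and integrating in time, exactly as in the passage to \eqref{3.6}; this gives \eqref{3.13}. Beyond the clean derivation of the $g$-equation, the hard part is precisely to guarantee that the forcing produces only the gain $\sqrt t\,\|b_0^2/r\|_{L^p}$ and no worse power of $t$, which is why the two-case treatment is needed.
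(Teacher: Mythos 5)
Your proposal is correct and follows essentially the same route as the paper: the same equation for $b^2/r$ (your $-2r(\partial_z\Gamma)^2$ equals the paper's $-\tfrac{2}{r}(\partial_zb)^2$) with its signed dissipative term discarded, the same Gr\"onwall absorption of $\int_0^t\|u^r/r\|_{L^\infty}\,d\tau\lesssim A_0(t)$ via \eqref{3.17}, and the same two-case treatment of the vorticity forcing (H\"older plus Lemma \ref{lem2.2} and Cauchy--Schwarz in time for $1<p\le2$; integration by parts plus Young for large $p$). The only cosmetic deviation is that for large $p$ you divide by $\|\omega\|_{L^p}^{p-2}$ to run Gr\"onwall on $\|\omega\|_{L^p}^2$, whereas the paper keeps the factor $\|\omega\|_{L^\infty_t(L^p)}^{p-2}$ and absorbs it at the end --- both amount to the same Young-type absorption.
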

\begin{proof}
Due to the second equation in \eqref{1.4}, we find
\begin{align}\label{b2r-eqns}
\partial_t (\frac{b^2}{r}) +(u\cdot\nabla) (\frac{b^2}{r})-\partial_{z}^{2}  (\frac{b^2}{r}) =- \frac{2}{r}(\partial_zb)^2+\frac{u^{r}}{r} (\frac{b^2}{r}).
\end{align}
Hence, for any $1< p<+\infty$, one can see 
\begin{align}\label{b2r-eqns-est-1}
&\frac{1}{p}\frac{\mathrm{d}}{\mathrm{d}t}\| \frac{b^2}{r}\|_{L^p}^p
+{4(p-1)\over p^2}
\Big\Vert\partial_z\vert\frac{b^2}{r}\vert^{\frac{p}{2}}
\Big\Vert_{L^2}^2=-\int_{\mathbb{R}^3}\frac{2}{r}(\partial_zb)^2(\frac{b^2}{r})^{p-1}\,\mathrm{d}x+ \int_{\mathbb{R}^3}\frac{u^{r}}{r} (\frac{b^2}{r})^p\,\mathrm{d}x,
\end{align}
which follows
\begin{align}\label{b2r-eqns-est-2}
&\frac{1}{p}\frac{\mathrm{d}}{\mathrm{d}t}\| \frac{b^2}{r}\|_{L^p}^p
+{4(p-1)\over p^2}
\Big\Vert\partial_z\vert\frac{b^2}{r}\vert^{\frac{p}{2}}
\Big\Vert_{L^2}^2\leq  \|\frac{u^{r}}{r}\|_{L^\infty} \| \frac{b^2}{r}\|_{L^p}^p.
\end{align}
Gronwall's inequality along with \eqref{3.17} leads to
\begin{align}\label{b2r-eqns-est-3}
&  \sup_{\tau\in [0, t]}\| \frac{b^2}{r}(\tau)\|_{L^p}^p +\Big\Vert\partial_z\vert\frac{b^2}{r}\vert^{\frac{p}{2}}
\Big\Vert_{L^2_t(L^2)}^2\leq C\| \frac{b_0^2}{r}\|_{L^p}^p\exp\{C \int_0^t\|\frac{u^{r}}{r}(\tau)\|_{L^\infty}\,\mathrm{d}\tau\} \leq \| \frac{b_0^2}{r}\|_{L^p}^pe^{CA_0(t)}.
\end{align}
Similarly, from the $b$ equation of \eqref{1.4}, we have
\begin{equation}\label{b-eqns-est-1}
\sup_{\tau\in [0, t]}\|b(\tau)\|_{L^{p}}+\Big\|\partial_{z}|b|^{\frac{p}{2}}\Big\|_{L^{2}}^{\frac{2}{p}} \leq C \|b_{0}\|_{L^{p}}e^{CA_{0}(t)}.
\end{equation}

{\bf Case 1: $2\leq p<+\infty$.}
Multiplying the vorticity equation in \eqref{1.4} by $\vert\omega\vert^{p-1}{\rm sign}(\omega)$, and then integrating the result equation, we obtain
\begin{equation}\label{3.15}
  \begin{split}
 & \frac{1}{p}\frac{\mathrm{d}}{\mathrm{d}t}\Vert\omega\Vert_{L^p}^p
+{4(p-1)\over p^2}
\Big\Vert\partial_z\vert\omega\vert^{\frac{p}{2}}
\Big\Vert_{L^2}^2=\int_{\mathbb{R}^3}  \frac{ u^{r}}{r}|\omega|^{p}\,\mathrm{d}x-\int_{\mathbb{R}^3} \partial_{z}(\frac{b^{2}}{r})|\omega|^{p-1}{\rm sign(\omega)}\,\mathrm{d}x\\
&=\int_{\mathbb{R}^3}  \frac{ u^{r}}{r}|\omega|^{p}\,\mathrm{d}x+\int_{\mathbb{R}^3}(\frac{b^{2}}{r}) \partial_{z}(|\omega|^{p-1}{\rm sign(\omega)})\,\mathrm{d}x.
  \end{split}
\end{equation}
Hence, using H\"{o}lder's and Young's inequalities, one has
\begin{equation}\label{3.15-11}
  \begin{split}
&\frac{1}{p}\frac{\mathrm{d}}{\mathrm{d}t}\Vert\omega\Vert_{L^p}^p
+{4(p-1)\over p^2}
\Big\Vert\partial_z\vert\omega\vert^{\frac{p}{2}}
\Big\Vert_{L^2}^2\leq\|\frac{u^r}{r}\|_{L^{\infty}}\|\omega\|_{L^{p}}^{p}+(p-1)\Big\|\partial_{z}|\omega|^{\frac{p}{2}}\Big\|_{L^2}[\int_{\mathbb{R}^3}\frac{b^{4}}{r^2} |\omega|^{p-2}\mathrm{d}x]^{\frac{1}{2}}\\
&\leq \eta\Big\|\partial_{z}|\omega|^{\frac{p}{2}}\Big\|_{L^2}^{2}+\|\frac{u^r}{r}\|_{L^{\infty}}\|\omega\|_{L^{p}}^p+C_{\eta}\|\frac{b^2}{r}\|_{L^p}^{2}\| \omega\|_{L^p}^{p-2}
  \end{split}
\end{equation}
for any positive constant $\eta$. Hence, taking $\eta={2(p-1)\over p^2}$, we get
\begin{equation}\label{3.15-22}
  \begin{split}
&\frac{d}{dt}\Vert\omega\Vert_{L^p}^p
+{2(p-1)\over p}
\Big\Vert\partial_z\vert\omega\vert^{\frac{p}{2}}
\Big\Vert_{L^2}^2\leq  C\|\frac{u^r}{r}\|_{L^{\infty}}\|\omega\|_{L^{p}}^p+C \|\frac{b^2}{r}\|_{L^p}^{2}\| \omega\|_{L^p}^{p-2}.
  \end{split}
\end{equation}
Gronwall's inequality implies
\begin{equation}\label{3.15-33}
  \begin{split}
\sup_{\tau\in [0, t]}\|\omega(\tau)\|_{L^p}^p+\Big\Vert\partial_z\vert\omega\vert^{\frac{p}{2}}
\Big\Vert_{L^{2}_{t}(L^2)}^{2}&\leq C(\|\omega_0\|_{L^p}^p+ \int_0^t\|\frac{b^2}{r}\|_{L^p}^{2}\| \omega\|_{L^p}^{p-2}\,d\tau)e^{C\int_0^t\|\frac{u^r}{r}(\tau)\|_{L^{\infty}}\,d\tau}\\
&\leq C(\|\omega_0\|_{L^p}^p+ t\,\|\frac{b^2}{r}\|_{L^\infty_t(L^p)}^{2}\| \omega\|_{L^\infty_t(L^p)}^{p-2} )e^{CA_0(t)},
  \end{split}
\end{equation}
which along with \eqref{b2r-eqns-est-3} implies
\begin{align}\label{3.16}
\Vert\omega(t)\Vert_{L^p}
+\Big\Vert\partial_z\vert\omega\vert^{\frac{p}{2}}
\Big\Vert_{L^{2}_{t}(L^2)}^{\frac{2}{p}}
\leq C\big(\|\omega_{0}\|_{L^{p}}+ \|\frac{b_{0}^2}{r}\|_{L^p}\sqrt{t}\big)e^{CA_0(t)}.
\end{align}

{\bf Case 2: $1< p < 2$.} The energy estimates result in
\begin{equation}\label{3.19}
  \begin{split}
\frac{1}{p}\frac{\mathrm{d}}{\mathrm{d}t}\Vert\omega\Vert_{L^p}^p
+{4(p-1)\over p^2}
\Big\Vert\partial_z\vert\omega\vert^{\frac{p}{2}}
\Big\Vert_{L^2}^2&=\int_{\mathbb{R}^3}  \frac{ u^{r}}{r}|\omega|^{p}\,dx-\int_{\mathbb{R}^3} \partial_{z}(\frac{b^{2}}{r})|\omega|^{p-1}{\rm sign(\omega)}\,dx\\
&\leq\|\frac{u^r}{r}\|_{L^{\infty}}\|\omega\|_{L^{p}}^{p}+\|\partial_{z}(\frac{b^{2}}{r})\|_{L^p}\|\omega\|_{L^{p}}^{p-1}.
  \end{split}
\end{equation}
Hence, by virtue to \eqref{psmall-1}, we find
\begin{equation}\label{3.19-11}
 \begin{split}
 &\frac{1}{p}\frac{\mathrm{d}}{\mathrm{d}t}\Vert\omega\Vert_{L^p}^p
+{4(p-1)\over p^2}
\Big\Vert\partial_z\vert\omega\vert^{\frac{p}{2}}
\Big\Vert_{L^2}^2\leq\|\frac{u^r}{r}\|_{L^{\infty}}\|\omega\|_{L^{p}}^{p}+C\|\partial_{z}(\frac{b^{2}}{r})^{\frac{p}{2}}\|_{L^2}\Vert \frac{b^{2}}{r}\Vert_{L^p}^{\frac{2-p}{2}}\|\omega\|_{L^{p}}^{p-1}.
  \end{split}
\end{equation}
Thanks to Gronwall's inequality, we deduce
\begin{equation}\label{3.20}
  \begin{split}
\sup_{\tau\in [0, t]}\Vert\omega(\tau)\Vert_{L^p}
+\Big\Vert\partial_z\vert\omega\vert^{\frac{p}{2}}
\Big\Vert_{L^{2}_{t}(L^2)}^{\frac{2}{p}}
\le C\bigl(\|\omega_{0}\|_{L^{p}}+\int_0^t\|\partial_{z}(\frac{b^{2}}{r})^{\frac{p}{2}}\|_{L^2}\Vert \frac{b^{2}}{r}\Vert_{L^p}^{\frac{2-p}{2}}\,\mathrm{d}\tau\bigr)
e^{CA_{0}(t)},
  \end{split}
\end{equation}
which follows
\begin{equation}\label{3.20-11}
  \begin{split}
&\sup_{\tau\in [0, t]}\Vert\omega(\tau)\Vert_{L^p}
+\Big\Vert\partial_z\vert\omega\vert^{\frac{p}{2}}
\Big\Vert_{L^{2}_{t}(L^2)}^{\frac{2}{p}}
\leq C\bigl(\|\omega_{0}\|_{L^{p}}+\sqrt{t}\|\partial_{z}(\frac{b^{2}}{r})^{\frac{p}{2}}\|_{L^2_t(L^2)}\Vert \frac{b^{2}}{r}\Vert_{L^\infty_t(L^p)}^{\frac{2-p}{2}}\bigr)
e^{CA_{0}(t)}.
  \end{split}
\end{equation}
Therefore, due to \eqref{b2r-eqns-est-3}, we find
\begin{equation}\label{3.20-22}
  \begin{split}
\sup_{\tau\in [0, t]}\Vert\omega(\tau)\Vert_{L^p}
+\Big\Vert\partial_z\vert\omega\vert^{\frac{p}{2}}
\Big\Vert_{L^{2}_{t}(L^2)}^{\frac{2}{p}}
\leq C\bigl(\|\omega_{0}\|_{L^{p}}+\sqrt{t}\,\| \frac{b_0^{2}}{r}\|_{L^p} \bigr)
e^{CA_{0}(t)}.
  \end{split}
\end{equation}
This completes the proof of the proposition.
\end{proof}

Thanks to Proposition \ref{prop3.2} and Lemma \ref{lem2.1}, we have the following results.
\begin{col}\label{cro3.2}
Let  the initial data $(\omega_{0},b_{0})$ satisfy
\begin{align*}
\omega_{0}\in L^{p,q},\quad\frac{\omega_0}{r}\in L^{\frac{3}{2},1},\quad b_{0}\in L^{3,\, 2} \cap L^{p,q},\quad {\rm and}\quad \frac{b_{0}}{r}\in L^{2p,2q}\cap L^{3,\, 2}.
\end{align*}
Assume that $1<p<\infty$, $1 \leq q \leq p$,  $\omega\in L_{t}^{\infty}(L^{p,q})$ and $b\in L_{t}^{\infty}(L^{p,q})$ be a solution of equation \eqref{1.4}. Then there are
\begin{itemize}
  \item if $1<p\leq 2$, $1 \leq q \leq p$, then
  \begin{itemize}
  \item  $\|\omega(t)\|_{L^{p,q}}+\|\partial_{z}\omega\|_{L_{t}^2(L^{p,q})}
  \leq
  C\bigl(\|\omega_{0}\|_{L^{p,q}}+\sqrt{t}\,\| \frac{b_0^{2}}{r}\|_{L^{p,q}}\bigr)e^{CA_0(t)}$  ,
   \item $ \|b(t)\|_{L^{p,q}}+\|\partial_{z}b\|_{L^{2}_t(L^{p,q})}
   \leq
   C\|b_0 \|_{L^{p,q}}e^{CA_{0}(t)}$,
     \item $ \|\frac{b^2}{r}(t)\|_{L^{p,q}}+\|\partial_{z}\frac{b^2}{r}\|_{L^{2}_t(L^{p,q})}
   \leq
   C\|\frac{b_0^2}{r}\|_{L^{p,q}}e^{CA_{0}(t)}$.
   \end{itemize}
  \item if $2<  p<\infty$, $1 \leq q \leq p$, then
  \begin{itemize}
  \item  $\|\omega(t)\|_{L^{p,q}}
  \leq
  C\bigl(\|\omega_{0}\|_{L^{p,q}}+\sqrt{t}\,\| \frac{b_0^{2}}{r}\|_{L^{p,q}}\bigr)e^{CA_0(t)}$  ,
   \item $ \|b(t)\|_{L^{p,q}}
   \leq
   C\|b_0 \|_{L^{p,q}}e^{CA_{0}(t)}$,
     \item $ \|\frac{b^2}{r}(t)\|_{L^{p,q}}
   \leq
   C\|\frac{b_0^2}{r}\|_{L^{p,q}}e^{CA_{0}(t)}$.
   \end{itemize}
\end{itemize}
In particular, if $\omega_{0},\,\frac{\omega_0}{r}\in L^{\frac{3}{2},1}$, $b_{0}\in L^{3, 2}$, and $r^{-1} b_{0}\in L^{\frac{3}{2}, 1}\cap L^{3, 2}$, we have
\begin{equation}\label{est-omega-32-1}
\begin{split}
&\|\omega\|_{L^{\infty}_t(L^{\frac{3}{2},1})}+\|\partial_{z}\omega\|_{L_{t}^{2}(L^{\frac{3}{2},1})}\leq C\bigl(\|\omega_{0}\|_{L^{\frac{3}{2},1}}+\sqrt{t}\,\| \frac{b_0^{2}}{r}\|_{L^{\frac{3}{2},1}}\bigr)e^{CA_0(t)},\\
&\|r^{-1}\omega\|_{L^{\infty}_t(L^{\frac{3}{2},1})}
+\|r^{-1}\partial_z\omega\|_{L^2_t(L^{{3\over 2},1})}
\leq
C\|r^{-1}\omega_0\|_{L^{{3\over 2},1}}+\sqrt{t}\|r^{-1}b_{0}\|_{L^{3,2}}^2,\\
& \|\frac{b}{r}\|_{L^{\infty}_t(L^{\frac{3}{2},1})}+\|\partial_{z}\frac{b}{r}\|_{L^{2}_t(L^{\frac{3}{2},1})}
   \leq C\|\frac{b_0}{r}\|_{L^{\frac{3}{2},1}},\\
   & \|\frac{b^2}{r}\|_{L^{\infty}_t(L^{\frac{3}{2},1})}+\|\partial_{z}\frac{b^2}{r}\|_{L^{2}_t(L^{\frac{3}{2},1})}
   \leq C\|\frac{b_0^2}{r}\|_{L^{\frac{3}{2},1}}e^{CA_{0}(t)},\\
  &\|\frac{b}{r}\|_{L^{\infty}_t(L^{3, 2})}   \leq    C\|\frac{b_0}{r}\|_{L^{3, 2}},\quad \|b\|_{L^{\infty}_t(L^{3, 2})}
   \leq    C\|b_0 \|_{L^{3, 2}}e^{CA_{0}(t)}.
\end{split}
\end{equation}
\end{col}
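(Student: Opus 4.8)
The overarching strategy is to treat Corollary \ref{cro3.2} as the analogue, for the fully coupled system \eqref{1.4}, of the real-interpolation scheme already carried out for the model system \eqref{1.5} in Remark \ref{rmk-1-2}. The decisive observation is that the only norm of the velocity appearing in Proposition \ref{prop3.2} is $A_0(t)$, which by Corollary \ref{cro3.1} is controlled by $\|\Omega_0\|_{L^{3/2,1}}$ and $\|\Gamma_0\|_{L^{3,2}}$ alone, hence independently of the Lorentz norms we now wish to estimate. Consequently, with $u$ (equivalently the coefficient $\frac{u^r}{r}$) regarded as given, the equations for $b$ and $\omega$ in \eqref{1.4} and the equation \eqref{b2r-eqns} for $\frac{b^2}{r}$ become linear (up to a controlled affine forcing) in their respective unknowns, so that their solution maps are bounded linear operators between Lebesgue spaces to which real interpolation applies.

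Concretely, I would fix two exponents $p_0<p<p_1$ and read off from Proposition \ref{prop3.2} the endpoint bounds: the maps $b_0\mapsto b(t)$, $\omega_0\mapsto\omega(t)$ and $\frac{b_0^2}{r}\mapsto\frac{b^2}{r}(t)$ are bounded $L^{p_i}\to L^{p_i}$ with operator norm $\lesssim e^{CA_0(t)}$ (with an extra $\sqrt t$ for the forcing), and, for $1<p\le 2$, the derivative maps send $L^{p_i}$ into $L^2_t(L^{p_i})$, where one first converts the $\partial_z|\cdot|^{p/2}$-bounds of Proposition \ref{prop3.2} into genuine $\partial_z(\cdot)$-bounds through Lemma \ref{lem2.2} and Remark \ref{rmk-1-1}. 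The real interpolation identity $(L^{p_0},L^{p_1})_{(\theta,q)}=L^{p,q}$ then upgrades the function-value bounds to the $L^{p,q}$ scale. For the time-integrated derivative norms I would invoke Lemma \ref{lem2.1}: since $q\le p\le 2$ does not exceed the time-integration exponent $2$, its first embedding identifies $[L^2_t(L^{p_0}),L^2_t(L^{p_1})]_{(\theta,q)}$ with $L^2_t(L^{p,q})$, so the interpolated derivative maps land in $L^2_t(L^{p,q})$ as claimed. For $2<p<\infty$ only function-value bounds are asserted, consistently with the restriction $1<p\le 2$ in Lemma \ref{lem2.2}.

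The quadratic forcing terms are handled exactly as the $-\partial_z\Gamma^2$ forcing is in Remark \ref{rmk-1-2}: the contribution of a forcing to the solution is itself a linear operator of that forcing, which I interpolate, while the forcing is controlled through the corresponding transport--diffusion estimate. Thus the $-\partial_z\frac{b^2}{r}$ source in the $\omega$-equation produces, after interpolation, the term $\sqrt t\,\|\frac{b_0^2}{r}\|_{L^{p,q}}e^{CA_0(t)}$, and the nonpositive source $-\frac{2}{r}(\partial_z b)^2$ in \eqref{b2r-eqns} only helps in the energy endpoint estimates that are then interpolated. Finally I would specialize $(p,q)$ to $(\tfrac32,1)$ and $(3,2)$ and reconstitute the concrete right-hand sides of \eqref{est-omega-32-1} via the Lorentz product rule of Proposition \ref{Neil}, using $\|\frac{b_0^2}{r}\|_{L^{3/2,1}}=\|\frac{b_0}{r}\,b_0\|_{L^{3/2,1}}\lesssim\|\frac{b_0}{r}\|_{L^{3,2}}\|b_0\|_{L^{3,2}}$ and the identification of the factor $\|\Gamma_0\|_{L^{3,2}}^2$; the $\Omega$- and $\Gamma$-lines are precisely Corollary \ref{cro3.1} and Remark \ref{rmk-1-2}.

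I expect the principal obstacle to be conceptual rather than computational: the solution maps of the coupled system \eqref{1.4} are genuinely nonlinear, so real interpolation cannot be applied to them verbatim. The remedy---the same device that makes Remark \ref{rmk-1-2} work---is to freeze $u$, which is legitimate precisely because $A_0(t)$ is pre-controlled independently of the $L^{p,q}$ norm under study, thereby exposing bona fide linear solution operators with controlled forcing, and to keep each quadratic source ($\frac{b^2}{r}$ and $\Gamma^2$) on the linear side by routing it through its own transport--diffusion equation. The one remaining point requiring care is the bookkeeping of the mixed time--space Lorentz norms, where the hypothesis $q\le p$ (hence $q\le 2$ when $p\le 2$) is exactly what is needed to invoke Lemma \ref{lem2.1}.
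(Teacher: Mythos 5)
Your proposal is correct and follows essentially the same route as the paper, which deduces Corollary \ref{cro3.2} exactly as you do: from the Lebesgue endpoint bounds of Proposition \ref{prop3.2} (whose constants involve only the pre-controlled quantity $A_0(t)$ from Corollary \ref{cro3.1}), the linearity of the solution maps once $u$ is frozen, real interpolation via Lemma \ref{lem2.1} combined with Lemma \ref{lem2.2} (the same device as in Remark \ref{rmk-1-2}), and O'Neil's product rule (Proposition \ref{Neil}) to specialize to \eqref{est-omega-32-1}. Nothing in your argument deviates from this scheme or leaves a gap beyond what the paper itself leaves implicit.
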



Below we give more higher-order estimates which will be used in the proof of uniqueness.

\begin{prop}\label{prop3.3}
Assume that the initial data $(\omega_0,b_{0})$ satisfies
\begin{align*}
&\omega_0\in L^{3,1}\cap L^{{3\over2},1},\quad
r^{-1}\omega_0 \in L^{{3\over2},1}, \quad  b_{0}\in L^2\cap L^{3, 2},\quad r^{-1}b_{0}\in L^{3, 2}\cap \dot{H}^1.
\end{align*}
 Let $(\omega,b)$ be a regular solution of the system \eqref{1.4}.
Then
\begin{equation}\label{est-nabla-br-1}
\|\nabla\frac{b}{r}(t)\|_{L^{2}}^2+\|\partial_{z}\nabla \frac{b}{r}\|_{L^{2}_{t}(L^{2})}^2
\leq C\|\nabla\frac{b_{0}}{r}\|_{L^{2}}^2 
\exp\{CA_0(t)+C A_1(t) e^{CA_{0}(t))}\},
\end{equation}
where
\begin{align*}
&A_1(t)\triangleq t\|\omega_{0}\|_{L^{3,1}}^2+t^2\|\nabla\frac{b_0}{r}\|_{L^{2}}^2\|b_0\|_{L^{2}}^2+\|\omega_{0}\|_{L^{\frac{3}{2},1}}^2+t\,\|b_0\|_{L^{3,2}}^2 \|\frac{b_0}{r}\|_{L^{3,2}}^2.
\end{align*}
\end{prop}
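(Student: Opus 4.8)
The plan is to run an $L^2$ energy estimate on $\nabla\Gamma$, where $\Gamma=\frac br$ solves the \emph{source-free} convection–diffusion equation $\partial_t\Gamma+(u\cdot\nabla)\Gamma-\partial_z^2\Gamma=0$ from \eqref{1.5} (that the $\frac{u^r}{r}b$ source cancels is exactly what makes the $\Gamma$-equation clean). Applying $\nabla$, testing against $\nabla\Gamma$, and using $\dive u=0$ to annihilate the transport term, I would obtain $\frac12\frac{d}{dt}\|\nabla\Gamma\|_{L^2}^2+\|\partial_z\nabla\Gamma\|_{L^2}^2=-\int_{\mathbb R^3}(\partial_k u^j)(\partial_k\Gamma)(\partial_j\Gamma)\,dx$, a Cartesian identity whose right-hand side reduces, for axisymmetric swirl-free fields (since $\nabla\Gamma\perp e_\theta$), to the four commutator terms $I_1=-\int\partial_r u^r(\partial_r\Gamma)^2$, $I_4=-\int\partial_z u^z(\partial_z\Gamma)^2$, and the mixed pair $I_2=-\int\partial_r u^z\,\partial_r\Gamma\,\partial_z\Gamma$, $I_3=-\int\partial_z u^r\,\partial_z\Gamma\,\partial_r\Gamma$. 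The guiding principle is that vertical dissipation gives \emph{full} gradient control of $\partial_z\Gamma$, because $\nabla(\partial_z\Gamma)=\partial_z\nabla\Gamma$; hence $\|\partial_z\Gamma\|_{L^6}\lesssim\|\partial_z\nabla\Gamma\|_{L^2}$ and $\|\partial_z\Gamma\|_{L^3}\lesssim\|\partial_z\Gamma\|_{L^2}^{1/2}\|\partial_z\nabla\Gamma\|_{L^2}^{1/2}$, so any term carrying a factor $\partial_z\Gamma$ can borrow a power of the dissipation.

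For the benign terms $I_3,I_4$ I would place the good factor $\partial_z\Gamma$ in $L^6$ (resp. $L^3$) via the above, keep $\partial_r\Gamma$ (resp. the second $\partial_z\Gamma$) in $L^2$, and bound $\partial_z u^r,\partial_z u^z$ in $L^3$ by $\|\partial_z\omega\|_{L^{3/2,1}}$ using Proposition \ref{prop2.2}; Young's inequality then absorbs the dissipation and leaves coefficients $\|\partial_z\omega\|_{L^{3/2,1}}^2$. The term $I_1$ has no vertical derivative of $\Gamma$, so I would first use $\dive u=0$ to write $\partial_r u^r=-\frac{u^r}{r}-\partial_z u^z$: the $\frac{u^r}{r}$-piece is left as $\|\frac{u^r}{r}\|_{L^\infty}\|\partial_r\Gamma\|_{L^2}^2$, a Gronwall coefficient integrable in time by \eqref{3.17}, while the $\partial_z u^z$-piece is integrated by parts in $z$ to give $-2\int u^z\,\partial_r\Gamma\,\partial_z\partial_r\Gamma$, controlled by $\|u^z\|_{L^\infty}\|\partial_r\Gamma\|_{L^2}\|\partial_z\partial_r\Gamma\|_{L^2}$ with $\|u^z\|_{L^\infty}\lesssim\|\omega\|_{L^{3,1}}$.

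The main obstacle is $I_2$, because $\partial_r u^z$ is precisely the velocity derivative that Proposition \ref{prop2.2} controls only through $\partial_r\omega$, which is \emph{not} available under the hypotheses here. The key idea is to trade it for the vorticity via $\partial_r u^z=\partial_z u^r-\omega$: the $\partial_z u^r$-contribution is then handled exactly as $I_3$, while the genuinely new term $J=\int\omega\,\partial_r\Gamma\,\partial_z\Gamma$ must be closed with $\partial_z\Gamma\in L^6$ (dissipation), $\partial_r\Gamma\in L^2$, and $\omega\in L^3$. One cannot improve the integrability of $\partial_r\Gamma$ (that would cost an uncontrolled horizontal derivative $\partial_r^2\Gamma$), so placing $\omega$ in $L^3$ is unavoidable; this is the crux and the reason the hypothesis $\omega_0\in L^{3,1}$ appears. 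Here I would invoke the a priori bound $\|\omega\|_{L^\infty_t(L^{3,1})}$ of Corollary \ref{cro3.2}/Proposition \ref{prop3.2}, whose magnetic source $\frac{b^2}{r}$ is estimated through the sharp Sobolev embedding $\dot H^1\hookrightarrow L^{6,2}$ together with $b\in L^\infty_t(L^2)$, which is what produces the coupling $\|b_0\|_{L^2}\|\nabla\Gamma_0\|_{L^2}$.

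Finally, collecting the four terms yields $\frac{d}{dt}\|\nabla\Gamma\|_{L^2}^2+\|\partial_z\nabla\Gamma\|_{L^2}^2\lesssim\big(\|\tfrac{u^r}{r}\|_{L^\infty}+\|\omega\|_{L^{3,1}}^2+\|\partial_z\omega\|_{L^{3/2,1}}^2\big)\|\nabla\Gamma\|_{L^2}^2$, and Gronwall's inequality gives the exponential bound. The exponent assembles from \eqref{3.17} (the $CA_0(t)$ part, from $\int_0^t\|\tfrac{u^r}{r}\|_{L^\infty}$) and from $\int_0^t(\|\omega\|_{L^{3,1}}^2+\|\partial_z\omega\|_{L^{3/2,1}}^2)\,d\tau$, which by \eqref{est-omega-32-1} and the source estimates $\|\frac{b_0^2}{r}\|_{L^{3/2,1}}\lesssim\min\{\|b_0\|_{L^{3,2}}\|\Gamma_0\|_{L^{3,2}},\ \|b_0\|_{L^2}\|\nabla\Gamma_0\|_{L^2}\}$ reproduces the four contributions defining $A_1(t)$, each carrying a factor $e^{CA_0(t)}$. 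I expect the only delicate bookkeeping to be matching the powers of $t$ and the exact Lorentz exponents in $A_1(t)$; the analytic content is entirely in the treatment of $I_1$ and of $J$ above, which should be carried out on smooth solutions and then propagated by the regularity already granted to $(\omega,b)$.
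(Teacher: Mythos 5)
Your energy estimate is, step for step, the argument the paper itself gives: the paper multiplies the $\Gamma$-equation by $-\Delta\frac{b}{r}$ (equivalent to your testing of $\nabla(\text{equation})$ against $\nabla\Gamma$), obtains the same commutator terms, treats the $\partial_ru^r$-term exactly as you do (via $\partial_ru^r=-\frac{u^r}{r}-\partial_zu^z$ plus an integration by parts in $z$), makes the same substitution $\partial_ru^z=\partial_zu^r-\omega$ to avoid the unavailable $\partial_r\omega$, and closes with $\|u\|_{L^\infty}\lesssim\|\omega\|_{L^{3,1}}$, $\|\partial_zu^r\|_{L^{3}}\lesssim\|\partial_z\omega\|_{L^{3/2}}$, Young's inequality and Gronwall, arriving at the same differential inequality with coefficient $\|\frac{u^r}{r}\|_{L^\infty}+\|\omega\|_{L^{3,1}}^2+\|\partial_z\omega\|_{L^{3/2}}^2$. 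So the analytic core of your proposal is sound and coincides with the paper's proof.

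The gap is in the final bookkeeping, at exactly the point you single out as producing the coupling $\|b_0\|_{L^2}\|\nabla\frac{b_0}{r}\|_{L^2}$. To integrate the coefficient $\|\omega\|_{L^{3,1}}^2$ you invoke $\|\omega\|_{L^\infty_t(L^{3,1})}$ from Corollary \ref{cro3.2}, and that bound requires the source $\frac{b_0^2}{r}$ in $L^{3,1}$. But O'Neil's inequality cannot give $\|\frac{b_0^2}{r}\|_{L^{3,1}}\lesssim\|b_0\|_{L^{2}}\|\frac{b_0}{r}\|_{L^{6,2}}$: the exponents must satisfy $\frac{1}{3}=\frac{1}{p_1}+\frac{1}{p_2}$, and $b_0\in L^2$ forces $\frac{1}{p_1}=\frac{1}{2}>\frac{1}{3}$, so no pairing exists. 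The only admissible estimate is $\|\frac{b_0^2}{r}\|_{L^{3,1}}\le\|b_0\|_{L^{6,2}}\|\frac{b_0}{r}\|_{L^{6,2}}\lesssim\|\nabla b_0\|_{L^2}\|\nabla\frac{b_0}{r}\|_{L^2}$, i.e.\ you need $b_0\in\dot H^1$, which is not among the proposition's stated hypotheses, and then the factor $\|b_0\|_{L^2}$ in $A_1(t)$ must be replaced by $\|\nabla b_0\|_{L^2}$. Your ``min'' estimate is valid only for the $L^{\frac32,1}$ norm (there $\frac{2}{3}=\frac{1}{2}+\frac{1}{6}$ is admissible), and that bound feeds the $\|\partial_z\omega\|_{L^2_t(L^{3/2,1})}$ contribution to $A_1(t)$ via \eqref{est-omega-32-1}, not the $L^{3,1}$ contribution that your terms $I_1$ and $J$ require. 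To be fair, this defect is inherited from the paper: its own proof bounds $\|\omega\|_{L^\infty_t(L^{3,1})}$ by $C\bigl(\|\omega_0\|_{L^{3,1}}+\sqrt t\,\|\nabla\frac{b_0}{r}\|_{L^2}\|\nabla b_0\|_{L^2}\bigr)e^{CA_0(t)}$, which is inconsistent with the $\|b_0\|_{L^2}^2$ appearing in the stated $A_1(t)$ (apparently a typo in the statement, or a missing hypothesis $b_0\in\dot H^1$; compare Proposition \ref{prop3.4}, where both $b_0$ and $r^{-1}b_0$ are assumed in $\dot H^1$). As written, however, your justification of that step asserts a false Lorentz--H\"older inequality, so the proof does not close under the hypotheses you use.
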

\begin{proof}
Multiplying both sides of the second of \eqref{1.5} by $-\Delta\frac{b}{r}$, then integrating the resulting equation, we get
\begin{equation}\label{3.21}
  \begin{split}
&\|\nabla\frac{b}{r}\|_{L^{2}}^{2}(t)+\|\partial_{z}\nabla \frac{b}{r}\|_{L^{2}_{t}(L^{2})}^{2}
=2\pi\int_{\mathbb{R}^2_{+}}(u^r\partial_{r}\frac{b}{r}+u^z\partial_{z}\frac{b}{r})(\frac{1}{r}\partial_{r}(r\partial_{r}\frac{b}{r})
+\partial_{z}^2\frac{b}{r})rdrdz\\
&=2\pi\int_{\mathbb{R}^2_{+}}u^r\partial_{r}\frac{b}{r}\partial_{r}(r\partial_{r}\frac{b}{r})drdz
+2\pi\int_{\mathbb{R}^2_{+}}u^z\partial_{z}\frac{b}{r}\partial_{r}(r\partial_{r}\frac{b}{r})drdz\\
&\qquad+2\pi\int_{\mathbb{R}^2_{+}}(u^r\partial_{r}\frac{b}{r}+u^z\partial_{z}\frac{b}{r})\partial_{z}^2\frac{b}{r}rdrdz\eqdefa I_{1}+I_{2}+I_{3}.
  \end{split}
\end{equation}
Note that $\partial_{r}u^r=-\frac{u^r}{r}-\partial_{z}u^z$, so we find
\begin{equation*}
  \begin{split}
 I_{1}&=\pi\int_{\mathbb{R}^2_{+}}\frac{u^r}{r}\partial_{r}(r\partial_{r}\frac{b}{r})^{2}drdz
 =\pi\int_{\mathbb{R}^2_{+}}\frac{u^r}{r^2}r^2(\partial_{r}\frac{b}{r})^{2}drdz
 -\pi\int_{\mathbb{R}^2_{+}}\frac{1}{r}\partial_{r}u^rr^2(\partial_{r}\frac{b}{r})^{2}drdz\\
&=2\pi\int_{\mathbb{R}^2_{+}}\frac{u^r}{r}(\partial_{r}\frac{b}{r})^{2}rdrdz+\pi\int_{\mathbb{R}^2_{+}} \partial_{z}u^z (\partial_{r}\frac{b}{r})^{2}rdrdz\\
&=2\pi\int_{\mathbb{R}^2_{+}}\frac{u^r}{r}(\partial_{r}\frac{b}{r})^{2}rdrdz
-2\pi\int_{\mathbb{R}^2_{+}}u^z(\partial_{r}\frac{b}{r})\partial_{z}\partial_{r}\frac{b}{r}rdrdz,
  \end{split}
\end{equation*}
which implies
\begin{align*}
|I_{1}|
\lesssim\|\frac{u^r}{r}\|_{L^\infty}\|\partial_{r}\frac{b}{r}\|_{L^2}^2
+\|u\|_{L^{\infty}}\|\partial_z\partial_r\frac{b}{r}\|_{L^2}\|\partial_{r}\frac{b}{r}\|_{L^2}.
\end{align*}
By virtue of $\partial_{r}u^z=\partial_{z}u^r-\omega$ and integration by parts, the integral $I_{2}$ satisfies
\begin{align*}
I_{2}&=-2\pi\int_{\mathbb{R}^2_{+}}u^z\partial_{z}\partial_{r}\frac{b}{r}\partial_{r}\frac{b}{r}rdrdz
-2\pi\int_{\mathbb{R}^2_{+}}\partial_{r}u^z\partial_{z}\frac{b}{r}\partial_{r}\frac{b}{r}rdrdz\\
&=-\int_{\mathbb{R}^3}u^z\partial_{z}\partial_{r}\frac{b}{r}\partial_{r}\frac{b}{r}dx+\int_{\mathbb{R}^3}(\omega-\partial_{z}u^r)\partial_{z}\frac{b}{r}\partial_{r}\frac{b}{r}\mathrm{d}x,
\end{align*}
which along with the facts $\|\partial_zu^r\|_{L^{3}} \lesssim \|\partial_z\omega\|_{L^{\frac{3}{2}}}$ and $\|u\|_{L^\infty}
\lesssim \|\omega\|_{L^{3,1}}$ gives rise to
\begin{equation*}
  \begin{split}
|I_{2}|
&\lesssim\|u^z\|_{L^{\infty}}\|\partial_z\partial_r\frac{b}{r}\|_{L^2}\|\partial_{r}\frac{b}{r}\|_{L^2}
+(\|\omega\|_{L^{3}}+\|\partial_zu^r\|_{L^{3}})\|\partial_{z}\frac{b}{r}\|_{L^{6}}\|\partial_{r}\frac{b}{r}\|_{L^{2}}\\
&\lesssim (\|\omega\|_{L^{3,1}}+\|\partial_z\omega\|_{L^{\frac{3}{2}}})\|\partial_{z}\nabla\frac{b}{r}\|_{L^{2}}\|\nabla\frac{b}{r}\|_{L^{2}}.
 \end{split}
\end{equation*}
By H\"{o}lder's inequality, we have
\begin{align*}
|I_{3}|&\lesssim\|u\|_{L^{\infty}}\|\nabla\frac{b}{r}\|_{L^{2}}\|\partial_{z}\nabla\frac{b}{r}\|_{L^{2}} \lesssim \|\omega\|_{L^{3,1}} \|\partial_{z}\nabla\frac{b}{r}\|_{L^{2}}\|\nabla\frac{b}{r}\|_{L^{2}}.
\end{align*}
Substituting $I_{1}-I_{3}$ estimates into \eqref{3.21}, from the fact $\|u\|_{L^\infty}
\lesssim \|\omega\|_{L^{3,1}}$, we obtain
\begin{equation*}
\begin{split}
&\|\nabla\frac{b}{r}(t)\|_{L^{2}}^2+\|\partial_{z}\nabla \frac{b}{r}\|_{L^{2}_{t}(L^{2})}^2\\
&\lesssim \|\frac{u^r}{r}\|_{L^\infty}\|\nabla\frac{b}{r}\|_{L^2}^2+\|\omega\|_{L^{3,1}}\|\nabla\frac{b}{r}\|_{L^{2}}\|\partial_{z}\nabla\frac{b}{r}\|_{L^{2}}+ \|\partial_z\omega\|_{L^{\frac{3}{2}}}\|\partial_{z}\nabla\frac{b}{r}\|_{L^{2}}\|\nabla\frac{b}{r}\|_{L^{2}},
 \end{split}
\end{equation*}
which along with Young's inequality implies
\begin{equation*}
\begin{split}
&\|\nabla\frac{b}{r}(t)\|_{L^{2}}^2+\|\partial_{z}\nabla \frac{b}{r}\|_{L^{2}_{t}(L^{2})}^2\leq C(\|\frac{u^r}{r}\|_{L^\infty}+\|\omega\|_{L^{3,1}}^2+\|\partial_z\omega\|_{L^{\frac{3}{2}}}^2) \|\nabla\frac{b}{r}\|_{L^2}^2.
 \end{split}
\end{equation*}
Hence, applying Gronwall's inequality gives rise to
\begin{equation}\label{3.22}
  \begin{split}
&\|\nabla\frac{b}{r}(t)\|_{L^{2}}^2+\|\partial_{z}\nabla \frac{b}{r}\|_{L^{2}_{t}(L^{2})}^2\leq C\|\nabla\frac{b_{0}}{r}\|_{L^{2}}^2 
\exp\{C\int_{0}^{t}(\|\frac{u^r}{r}\|_{L^\infty}+\|\omega\|_{L^{3,1}}^2+\|\partial_z\omega\|_{L^{\frac{3}{2}}}^2)d\tau\}.
 \end{split}
\end{equation} 
Thanks to Corollary \ref{cro3.1}, Proposition \ref{prop3.2}, and the Sobolev embedding $\dot{H}^1(\mathbb{R}^3)\hookrightarrow L^{6, 2}(\mathbb{R}^3)$ (see \cite{Peetre-1966, Bergh-1976, Tartar-1998}), we know that
\begin{align*}
\|\frac{u^r}{r}\|_{L^{\infty}_t(L^{\infty})}
&\leq
C(\|\Omega_{0}\|_{L^{\frac{3}{2},1}}+\sqrt{t}\|\Gamma_{0}\|_{L^{3,2}}^2),\\
\|\omega\|_{L^{\infty}_t(L^{3,1})}
&\leq
C\bigl(\|\omega_{0}\|_{L^{3,1}}+\sqrt{t}\|\frac{b_0}{r}\|_{L^{6,2}}\|b_0\|_{L^{6,2}}\bigr)e^{CA_{0}(t)}\\
&\leq
C\bigl(\|\omega_{0}\|_{L^{3,1}}+\sqrt{t}\|\nabla\frac{b_0}{r}\|_{L^{2}}\|\nabla\,b_0\|_{L^{2}}\bigr)e^{CA_{0}(t)},\\
\|\partial_{z}\omega\|_{L_{t}^{2}(L^{\frac{3}{2},1})}^2&\leq C\bigl(\|\omega_{0}\|_{L^{\frac{3}{2},1}}^2+t\,\|b_0\|_{L^{3,2}}^2 \|\frac{b_0}{r}\|_{L^{3,2}}^2\bigr)e^{CA_0(t)}.
\end{align*}
Substituting the above inequalities into \eqref{3.22}, we get \eqref{est-nabla-br-1}, which concludes the proof of Proposition \ref{prop3.3}.
\end{proof}
\begin{rmk}\label{rmk-embedding-1}
Thanks to Theorem 5.3.1. in \cite{Bergh-1976}, we have the following interpolation inequality
$$\|b_0\|_{L^{3, 2}(\mathbb{R}^3)} \lesssim \|b_0\|_{L^{\frac{3}{2}, 2}(\mathbb{R}^3)}^{\frac{1}{3}}\|b_0\|_{L^{6, 2}(\mathbb{R}^3)}^{\frac{2}{3}} \lesssim  \|b_0\|_{L^{\frac{3}{2}}(\mathbb{R}^3)}^{\frac{1}{3}}\|\nabla\,b_0\|_{L^{2}(\mathbb{R}^3)}^{\frac{2}{3}}, $$
where we used the embedding $\dot{H}^1(\mathbb{R}^3)\hookrightarrow L^{6, 2}(\mathbb{R}^3)$ (see \cite{Peetre-1966, Tartar-1998}), which implies that
$$  
L^{\frac{3}{2}}(\mathbb{R}^3) \cap \dot{H}^1 (\mathbb{R}^3)\subset
L^{3, 2}(\mathbb{R}^3) .
$$
\end{rmk}
\begin{prop}\label{prop3.4}
Assume that the initial data $(\omega_0,b_{0})$ satisfies
\begin{align*}
&\omega_0\in L^{3, 1},\quad
r^{-1}\omega_0\in L^{{3\over2},1}, \quad  b_{0}\in L^{3, 2} \cap \dot{H}^1,\quad {r}^{-1}b_{0}\in L^{3, 2} \cap \dot{H}^1.
\end{align*}
Assume that $(\omega,b)$ is a regular solution of the system \eqref{1.4}, then there holds
\begin{equation}\label{3.49}
\begin{split}
&\|\nabla\,b\|_{L^{2}}^{2}+\|\partial_{z}\nabla\,b\|_{L^2_t(L^{2})}^{2}
\leq C\|\nabla\,b_0\|_{L^2}^2\exp\{CA_0(t)+CA_2(t)e^{CA_0(t)}\},
\end{split}
\end{equation}
where
\begin{equation*}
\begin{split}
A_2(t)\eqdefa &t\|\omega_{0}\|_{L^{3,1}}+{t}^{\frac{3}{2}}\,\|\nabla( {r}^{-1}{b_0})\|_{L^{2}}\|\nabla\,b_0\|_{L^{2}}+t\|\omega_{0}\|_{L^{3,1}}^2\\
&\quad+t^2\,\|\nabla({r}^{-1}{b_0})\|_{L^{2}}^2\|\nabla\,b_0\|_{L^{2}}^2+\|\omega_{0}\|_{L^{\frac{3}{2},1}}^2
+t\,\|{r}^{-1} b_0\|_{L^{3,2}}^2\|b_0\|_{L^{3,2}}^2.
\end{split}
\end{equation*}
\end{prop}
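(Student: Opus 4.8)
The plan is to run the same $\dot{H}^1$ energy scheme used in Proposition \ref{prop3.3}, but now applied to $b$ itself rather than to $b/r$; the essential new feature is the reaction term $\frac{u^r}{r}b$ on the right-hand side of the $b$-equation in \eqref{1.4}. First I would multiply that equation by $-\Delta b$ and integrate over $\mathbb{R}^3$. Since $b=0$ on the axis by Proposition \ref{prop2.2}, no boundary terms appear, and integrating by parts in $z$ and then in the horizontal variables gives $-\int\partial_z^2 b\,(-\Delta b)\,dx=\|\partial_z\nabla b\|_{L^2}^2$, so the time-derivative and dissipation terms produce
\[
\tfrac12\tfrac{d}{dt}\|\nabla b\|_{L^2}^2+\|\partial_z\nabla b\|_{L^2}^2 .
\]
It remains to control the convection term $\int(u\cdot\nabla b)\Delta b\,dx$ and the reaction term $\int\frac{u^r}{r}b(-\Delta b)\,dx$ purely by $\|\nabla b\|_{L^2}$, the vertical-dissipation norm $\|\partial_z\nabla b\|_{L^2}$ (which is to be absorbed), and velocity/magnetic quantities already bounded in Section \ref{sect-3}.

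For the convection term I would reproduce the decomposition $I_1+I_2+I_3$ from the proof of Proposition \ref{prop3.3} verbatim, with $b$ in the role of $b/r$: the incompressibility relation $\partial_r u^r=-\frac{u^r}{r}-\partial_z u^z$ disposes of $I_1$, the vorticity relation $\partial_r u^z=\partial_z u^r-\omega$ together with $\|\partial_z u^r\|_{L^{3}}\lesssim\|\partial_z\omega\|_{L^{\frac{3}{2}}}$ and $\|u\|_{L^\infty}\lesssim\|\omega\|_{L^{3,1}}$ handles $I_2$, and H\"older handles $I_3$; the Sobolev embedding $\dot{H}^1\hookrightarrow L^6$ converts the intermediate $\|\partial_z b\|_{L^6}$ into $\|\partial_z\nabla b\|_{L^2}$. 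After Young's inequality this yields a bound of the form $\tfrac14\|\partial_z\nabla b\|_{L^2}^2+C\bigl(\|\frac{u^r}{r}\|_{L^\infty}+\|\omega\|_{L^{3,1}}^2+\|\partial_z\omega\|_{L^{\frac{3}{2}}}^2\bigr)\|\nabla b\|_{L^2}^2$.

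The heart of the matter, and the step I expect to be the main obstacle, is the reaction term. Integrating by parts once gives
\[
\int\tfrac{u^r}{r}b(-\Delta b)\,dx=\int\tfrac{u^r}{r}|\nabla b|^2\,dx+\int b\,\partial_z\bigl(\tfrac{u^r}{r}\bigr)\partial_z b\,dx+\int b\,\partial_r\bigl(\tfrac{u^r}{r}\bigr)\partial_r b\,dx .
\]
The first piece is at once $\le\|\frac{u^r}{r}\|_{L^\infty}\|\nabla b\|_{L^2}^2$; the second is estimated by $\|\partial_z(\frac{u^r}{r})\|_{L^{\frac{3}{2}}}\|b\|_{L^6}\|\partial_z b\|_{L^6}\lesssim\|\frac{\omega}{r}\|_{L^{\frac{3}{2}}}\|\nabla b\|_{L^2}\|\partial_z\nabla b\|_{L^2}$, using Proposition \ref{prop2.3} and $\dot{H}^1\hookrightarrow L^6$, and then absorbed by Young. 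The dangerous piece is the last one, because $\partial_r(\frac{u^r}{r})$ is a \emph{horizontal} derivative of the velocity, for which the vertical dissipation provides no smoothing. The device is again incompressibility: $\partial_r(\frac{u^r}{r})=-\frac{2u^r}{r^2}-\frac{\partial_z u^z}{r}$, which rewrites the term as $-2\int\frac{u^r}{r}\cdot\frac{b}{r}\,\partial_r b\,dx-\int\frac{b}{r}\,\partial_z u^z\,\partial_r b\,dx$. Both integrals now carry a factor $b/r$, which I would put in $L^6$ via $\|\frac{b}{r}\|_{L^6}\lesssim\|\nabla\frac{b}{r}\|_{L^2}$ — precisely the quantity controlled in Proposition \ref{prop3.3} — while the velocity factors are taken in $L^3$ through $\|\frac{u^r}{r}\|_{L^3}\lesssim\|\frac{\omega}{r}\|_{L^{\frac{3}{2}}}$ and $\|\partial_z u^z\|_{L^3}\lesssim\|\partial_z\omega\|_{L^{\frac{3}{2}}}$ (Proposition \ref{prop2.2}), leaving a single factor $\|\partial_r b\|_{L^2}\le\|\nabla b\|_{L^2}$. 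The point of the incompressibility rewriting is exactly that every surviving factor is either the frozen, uniformly bounded quantity $\|\nabla\frac{b}{r}\|_{L^\infty_t(L^2)}$ or an already-estimated velocity norm, rather than an uncontrolled horizontal second derivative.

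Collecting everything gives a differential inequality $\frac{d}{dt}\|\nabla b\|_{L^2}^2+\|\partial_z\nabla b\|_{L^2}^2\le \Phi(t)\,\|\nabla b\|_{L^2}^2$, with $\Phi(t)$ controlled by $\|\frac{u^r}{r}\|_{L^\infty}$, $\|\omega\|_{L^{3,1}}$, $\|\omega\|_{L^{3,1}}^2$, $\|\frac{\omega}{r}\|_{L^{\frac{3}{2}}}^2$ and $\|\partial_z\omega\|_{L^{\frac{3}{2}}}^2$, the $\nabla\frac{b}{r}$ factors entering as the constant from Proposition \ref{prop3.3}. Gronwall's lemma then produces $\|\nabla b(t)\|_{L^2}^2+\|\partial_z\nabla b\|_{L^2_t(L^2)}^2\le C\|\nabla b_0\|_{L^2}^2\exp\{\int_0^t\Phi\}$, and the exponent is evaluated by inserting the bounds of Corollary \ref{cro3.1} for $\|\frac{u^r}{r}\|_{L^\infty}$, of Corollary \ref{cro3.2} for $\|\omega\|_{L^{3,1}}$ and $\|\partial_z\omega\|_{L^{\frac{3}{2},1}}$, and of Proposition \ref{prop3.3} for $\|\nabla\frac{b}{r}\|_{L^2}$: the integral $\int_0^t\|\frac{u^r}{r}\|_{L^\infty}$ furnishes the factor $A_0(t)$, while $\int_0^t\|\omega\|_{L^{3,1}}$, $\int_0^t\|\omega\|_{L^{3,1}}^2$ and $\int_0^t\|\partial_z\omega\|_{L^{\frac{3}{2}}}^2$ assemble into $A_2(t)e^{CA_0(t)}$, yielding \eqref{3.49}.
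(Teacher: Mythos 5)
Your architecture is genuinely different from the paper's: you test the $b$-equation with $-\Delta b$ and recycle the convection estimates of Proposition \ref{prop3.3}, treating the reaction term $\frac{u^r}{r}b$ via incompressibility, Proposition \ref{prop2.3}, and the \emph{frozen} bound on $\|\nabla\frac{b}{r}\|_{L^2}$; the paper instead applies $\partial_r$ and $\partial_z$ to the equation and runs energy estimates on the enlarged vector $(\partial_r b,\,r^{-1}b,\,\partial_z b)$. Your individual H\"older/Sobolev steps and all invocations of Propositions \ref{prop2.2} and \ref{prop2.3} check out.

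The gap is in the concluding Gronwall step, and it is exactly where your route and the paper's diverge. By your own estimate, the dangerous piece is bounded by
\begin{equation*}
\Bigl(\|\tfrac{\omega}{r}\|_{L^{3/2}}+\|\partial_z\omega\|_{L^{3/2}}\Bigr)\,\|\nabla\tfrac{b}{r}\|_{L^2}\,\|\nabla b\|_{L^2}
\eqdefa \Psi(t)\,\|\nabla b\|_{L^2},
\end{equation*}
which is \emph{linear}, not quadratic, in $\|\nabla b\|_{L^2}$: the factor $\|\nabla\frac{b}{r}\|_{L^2}$ is an external quantity that cannot be converted into $\|\nabla b\|_{L^2}$ (there is no 2D-type Hardy inequality controlling $b/r$ by $\nabla b$). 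So the differential inequality you actually obtain is $y'+D\le \Phi\,y+\Psi\sqrt{y}$ with $y=\|\nabla b\|_{L^2}^2$ and $D=\|\partial_z\nabla b\|_{L^2}^2$, and Gronwall then gives
\begin{equation*}
y(t)+\int_0^t D\,d\tau\;\le\; C\Bigl(y(0)+\int_0^t\Psi^2\,d\tau\Bigr)\exp\Bigl\{C\int_0^t(\Phi+1)\,d\tau\Bigr\},
\end{equation*}
an estimate with an \emph{additive} data term, not the multiplicative form \eqref{3.49} whose prefactor is $\|\nabla b_0\|_{L^2}^2$ alone. The missing idea — precisely the paper's device — is to carry $r^{-1}b$ \emph{inside} the Gronwall energy rather than freezing it: adding $\|r^{-1}b\|_{L^2}^2$ to the functional costs nothing, since by the $\Gamma$-equation in \eqref{1.5} (cf.\ \eqref{3.1} with $p=2$) it supplies for free both the $L^\infty_t(L^2)$ bound and the dissipation $\|\partial_z(r^{-1}b)\|_{L^2}^2$; then every source term (cf.\ \eqref{3.36}) is quadratic in the combined energy $\|(\partial_r b,\,r^{-1}b,\,\partial_z b)\|_{L^2}^2$, or of the form (coefficient)$\times\sqrt{\text{energy}}\times\sqrt{\text{dissipation}}$, and Gronwall closes with prefactor the combined initial energy. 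To be fair, your additive bound would suffice for the downstream uses (Proposition \ref{prop3.5} and uniqueness), and the paper itself silently replaces $\|(\partial_r b_0, r^{-1}b_0,\partial_z b_0)\|_{L^2}$ by $\|\nabla b_0\|_{L^2}$ in its last line; but as written your argument does not yield \eqref{3.49}.
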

\begin{proof}
Acting the operator $\partial_r$ to the second equation of \eqref{1.4} yields
\begin{equation}\label{14-1}
\begin{split}
\partial_{t}\partial_{r}b+(u\cdot\nabla)\partial_{r}b-\partial_{z}^2\partial_{r}b
=\partial_{r}(\frac{u^rb}{r})-\partial_{r}u^r\partial_{r}b-\partial_{r}u^z\partial_{z}b.
\end{split}
\end{equation}
Multiplying \eqref{14-1} by $\partial_r\,b$, then integrating the result equation gives rise to
\begin{equation}\label{3.34}
\begin{split}
\frac{1}{2}\frac{\mathrm{d}}{\mathrm{d}t}\|\partial_{r}b\|_{L^{2}}^{2}+\|\partial_{z}\partial_{r}b\|_{L^{2}}^{2}
&=\int_{\mathbb{R}^3}\partial_{r}(\frac{u^rb}{r})\partial_{r}b\,dx-\int_{\mathbb{R}^3}\partial_{r}u^r(\partial_{r}b)^2\,dx
-\int_{\mathbb{R}^3}\partial_{r}u^z\partial_{z}b\partial_{r}b\,dx\\
&\eqdefa K_{1}+K_2+K_3.
\end{split}
\end{equation}
By using the identity $\partial_{r}u^r=-\frac{u^r}{r}-\partial_{z}u^z$, we have
\begin{equation*}
\begin{split}
K_{1}&=\int_{\mathbb{R}^3}-\frac{u^r}{r}\frac{b}{r}\partial_{r}b\,dx
+\int_{\mathbb{R}^3}\partial_{r}u^r\frac{b}{r}\partial_{r}b\,dx
+\int_{\mathbb{R}^3}\frac{u^r}{r}(\partial_{r}b)^2\,dx\\
&=-2\int_{\mathbb{R}^3}\frac{u^r}{r}\frac{b}{r}\partial_{r}b\,dx
-\int_{\mathbb{R}^3}\partial_{z}u^z\frac{b}{r}\partial_{r}b\,dx+\int_{\mathbb{R}^3}\frac{u^r}{r}(\partial_{r}b)^2\,dx\\
&=-2\int_{\mathbb{R}^3}\frac{u^r}{r}\frac{b}{r}\partial_{r}b\,dx
+\int_{\mathbb{R}^3}u^z\partial_{z}\frac{b}{r}\partial_{r}b\,dx
+\int_{\mathbb{R}^3}u^z\frac{b}{r}\partial_{z}\partial_{r}b\,dx+\int_{\mathbb{R}^3}\frac{u^r}{r}(\partial_{r}b)^2\,dx,
\end{split}
\end{equation*}
which along with H\"{o}lder inequality leads to
\begin{equation*}
\begin{split}
|K_{1}|
&\leq 2\|\frac{u^r}{r}\|_{L^{\infty}}\|\frac{b}{r}\|_{L^2}\|\partial_{r}b\|_{L^2}+\|\frac{u^r}{r}\|_{L^{\infty}}\|\partial_{r}b\|_{L^2}^2\notag\\
&\qquad+\|u^z\|_{L^\infty}\|\partial_{z}\frac{b}{r}\|_{L^2}\|\partial_{r}b\|_{L^2}+\|\partial_{z}\partial_{r}b\|_{L^2}\|u^z\|_{L^\infty}\|\frac{b}{r}\|_{L^2}.
\end{split}
\end{equation*}
Along the same line, the bound of $K_2$ yields
\begin{equation*}
\begin{split}
K_2&=\int_{\mathbb{R}^3}\frac{u^r}{r}(\partial_{r}b)^2\,dx
+\int_{\mathbb{R}^3}\partial_{z}u^z(\partial_{r}b)^2\,dx
=\int_{\mathbb{R}^3}\frac{u^r}{r}(\partial_{r}b)^2\,dx+2\int_{\mathbb{R}^3}u^z\partial_{z}\partial_{r}b\partial_{r}b\,dx\\
&\leq\|\frac{u^r}{r}\|_{L^\infty}\|\partial_{r}b\|_{L^2}^2+\|u^z\|_{L^\infty}\|\partial_{z}\partial_{r}b\|_{L^2}\|\partial_{r}b\|_{L^2}.
\end{split}
\end{equation*}
By the definition $\partial_{r}u^z=\omega-\partial_{z}u^r$, we get $K_3=\int_{\mathbb{R}^3}\omega \partial_{z}b\partial_{r}b\,dx-\int_{\mathbb{R}^3}\partial_{z}u^r\partial_{z}b\partial_{r}b\,dx$, which follows
\begin{equation*}
\begin{split}
|K_3|&\lesssim (\|\omega\|_{L^3} +\|\partial_{z}u^r\|_{L^3})\|\partial_{z}b\|_{L^6}\|\partial_{r}b\|_{L^2}.
\end{split}
\end{equation*}
Due to the fact $\|\partial_zu^r\|_{L^3} \leq C\|\partial_z\omega\|_{L^{\frac{3}{2}, 3}}$, we infer
\begin{equation*}
\begin{split}
|K_3|&\lesssim (\|\omega\|_{L^3}  +\|\partial_z\omega\|_{L^{\frac{3}{2}, 3}})\|\partial_{z}\nabla\,b\|_{L^2} \|\partial_{r}b\|_{L^2}.
\end{split}
\end{equation*}
Inserting the estimates of $K_1$-$K_3$ into \eqref{3.34}, we obtain
\begin{equation}\label{3.35}
\begin{split}
&\frac{1}{2}\frac{\mathrm{d}}{\mathrm{d}t}\|\partial_{r}b\|_{L^{2}}^{2}+\|\partial_{z}\partial_{r}b\|_{L^{2}}^{2}
\leq C(\|{r}^{-1}{u^r}\|_{L^{\infty}}+\|u^z\|_{L^\infty}^2)\|(\partial_{r}b,\,{r}^{-1}{b})\|_{L^2}^2\\
&\qquad+C(\|u^z\|_{L^\infty}+\|\omega\|_{L^3}  +\|\partial_z\omega\|_{L^{\frac{3}{2}, 3}})\|(\partial_{r}b,\,{r}^{-1}{b})\|_{L^2}\|\partial_z(\partial_{r}b,\,\partial_z b,\,{r}^{-1}{b})\|_{L^2},
\end{split}
\end{equation}
which along with \eqref{3.1} gives rise to
\begin{equation}\label{3.36}
\begin{split}
&\frac{1}{2}\frac{\mathrm{d}}{\mathrm{d}t}\|(\partial_{r}b,\,{r}^{-1}{b})\|_{L^{2}}^{2}+\|\partial_{z}(\partial_{r}b,\,{r}^{-1}{b})\|_{L^{2}}^{2}
\leq C(\|{r}^{-1}{u^r}\|_{L^{\infty}}+\|u^z\|_{L^\infty}^2)\|(\partial_{r}b,\,{r}^{-1}{b})\|_{L^2}^2\\
&\qquad+C(\|u^z\|_{L^\infty}+\|\omega\|_{L^3}  +\|\partial_z\omega\|_{L^{\frac{3}{2}, 3}})\|(\partial_{r}b,\,{r}^{-1}{b})\|_{L^2}\|\partial_z(\partial_{r}b,\,\partial_z b,\,{r}^{-1}{b})\|_{L^2}.
\end{split}
\end{equation}

We may repeat the above argument to get the estimate of $\|\partial_{z}b \|_{L^{2}}$. In fact, acting the operator $\partial_z$ to the second of the system \eqref{1.4} yields
\begin{equation}\label{15-1}
\begin{split}
\partial_{t}\partial_{z}b+(u\cdot\nabla)\partial_{z}b-\partial_{z}^2\partial_{z}b
=\partial_{z}u^r({r}^{-1}b-\partial_{r}b)+{r}^{-1} u^r\partial_{z}b -\partial_{z}u^z\partial_{z}b.
\end{split}
\end{equation}
Multiply \eqref{15-1} by $\partial_z\,b$, then integrating by part gives
\begin{equation*}
\begin{split}
&\frac{1}{2}\frac{\mathrm{d}}{\mathrm{d}t}\|\partial_{z}b\|_{L^{2}}^{2}+\|\partial_{z}^2b\|_{L^{2}}^{2}\\
&=\int_{\mathbb{R}^3}\partial_{z}u^r({r}^{-1}b-\partial_{r}b)\partial_{z}b\,\mathrm{d}x+\int_{\mathbb{R}^3}{r}^{-1} u^r (\partial_{z}b)^2\,\mathrm{d}x
-\int_{\mathbb{R}^3}\partial_{z}u^z(\partial_{z}b)^2\,\mathrm{d}x\\
&=-\int_{\mathbb{R}^3}u^r\partial_{z}[({r}^{-1}b-\partial_{r}b)\partial_{z}b]\,\mathrm{d}x+\int_{\mathbb{R}^3}{r}^{-1} u^r (\partial_{z}b)^2\,\mathrm{d}x
+2\int_{\mathbb{R}^3}u^z\partial_{z}b\partial_{z}^2b \,\mathrm{d}x.
\end{split}
\end{equation*}
Hence, we have
\begin{equation*}
\begin{split}
\frac{1}{2}\frac{d}{dt}\|\partial_{z}b\|_{L^{2}}^{2}+\|\partial_{z}^2b\|_{L^{2}}^{2}
&\leq\|u^r\|_{L^\infty}\Big(\|\partial_{z}({r}^{-1}b-\partial_{r}b)\|_{L^2}\|\partial_{z}b\|_{L^2}
+\|({r}^{-1}b-\partial_{r}b)\|_{L^2}\|\partial_{z}^2b\|_{L^2}\Big)\\
&\quad+\Big(\|{r}^{-1} u^r\|_{L^\infty}+2 \|u^z\|_{L^\infty}^2\Big)\|\partial_{z}b\|_{L^2}^2,
\end{split}
\end{equation*}
from which, together with \eqref{3.36}, one has
\begin{equation*}
\begin{split}
&\frac{1}{2}\frac{\mathrm{d}}{\mathrm{d}t}\|(\partial_{r}b,\,{r}^{-1}{b}, \,\partial_zb)\|_{L^{2}}^{2}+\|\partial_{z}(\partial_{r}b,\,{r}^{-1}{b}, \,\partial_zb)\|_{L^{2}}^{2}\\
&\leq C\Big(\|{r}^{-1}{u^r}\|_{L^{\infty}}+\|u^z\|_{L^{\infty}}^2\Big)\|(\partial_{r}b,\,{r}^{-1}{b}, \,\partial_zb)\|_{L^2}^2\\
&\quad+C\Big(\|u\|_{L^\infty}+\|\omega\|_{L^3}  +\|\partial_z\omega\|_{L^{\frac{3}{2}, 3}}\Big)\|(\partial_{r}b,\,{r}^{-1}{b}, \,\partial_zb)\|_{L^2}\|\partial_z(\partial_{r}b,\,{r}^{-1}{b}, \,\partial_zb)\|_{L^2}.
\end{split}
\end{equation*}
Thanks to Young's inequality, we find
\begin{equation*}
\begin{split}
&\frac{\mathrm{d}}{\mathrm{d}t}\|(\partial_{r}b,\,{r}^{-1}{b}, \,\partial_zb)\|_{L^{2}}^{2}+\|\partial_{z}(\partial_{r}b,\,{r}^{-1}{b}, \,\partial_zb)\|_{L^{2}}^{2}\\
&\leq C\Big(\|({r}^{-1}{u^r},\,u^z)\|_{L^{\infty}}+\|u\|_{L^\infty}^2+\|\omega\|_{L^3}^2 +\|\partial_z\omega\|_{L^{\frac{3}{2}, 3}}^2\Big)\|(\partial_{r}b,\,{r}^{-1}{b}, \,\partial_zb)\|_{L^2}^2,
\end{split}
\end{equation*}
which follows that
\begin{equation}\label{3.47}
\begin{split}
&\|(\partial_{r}b,\,{r}^{-1}{b}, \,\partial_zb)\|_{L^{2}}^{2}+\|\partial_{z}(\partial_{r}b,\,{r}^{-1}{b}, \,\partial_zb)\|_{L^2_t(L^{2})}^{2}\\
&\leq C\|(\partial_{r}b_0,\,{r}^{-1}{b}_0, \,\partial_zb_0)\|_{L^2}^2\\
&\qquad \times\exp\Big\{C\int_0^t(\|({r}^{-1}{u^r},\,u^z)\|_{L^{\infty}}+\|u\|_{L^\infty}^2+\|\omega\|_{L^3}^2) \,\mathrm{d}\tau +C\|\partial_z\omega\|_{L^2_t(L^{\frac{3}{2}, 3})}^2\Big\}.
\end{split}
\end{equation}
Thanks to \eqref{3.17}, we know that
\begin{equation}\label{3.47-111}
\begin{split}
&\|u\|_{L^\infty}+\|\omega\|_{L^{3}}\leq C\|\omega\|_{L^{3,1}}\leq   C\bigl(\|\omega_{0}\|_{L^{3,1}}+\sqrt{t}\,\| {r}^{-1}{b_0^{2}}\|_{L^{3,1}}\bigr)e^{CA_0(t)},\\
&\|\omega(t)\|_{L^{\frac{3}{2},1}}+\|\partial_{z}\omega\|_{L_{t}^2(L^{\frac{3}{2},1})}
  \leq C\bigl(\|\omega_{0}\|_{L^{\frac{3}{2},1}}+\sqrt{t}\,\|{r}^{-1}{b_0^{2}}\|_{L^{\frac{3}{2},1}}\bigr)e^{CA_0(t)}.
  \end{split}
\end{equation}
Hence, inserting \eqref{3.47-111} into \eqref{3.47} yields
\begin{equation*}
\begin{split}
&\|\nabla\,b\|_{L^{2}}^{2}+\|\partial_{z}\nabla\,b\|_{L^2_t(L^{2})}^{2}
\leq C\|\nabla\,b_0\|_{L^2}^2e^{C A_0(t)}\\
&\quad\times\exp\{C\bigl(t\|\omega_{0}\|_{L^{3,1}}+t^{\frac{3}{2}}\,\| {r}^{-1}{b_0^{2}}\|_{L^{3,1}}\\
&\qquad\qquad\quad+t\|\omega_{0}\|_{L^{3,1}}^2+t^2\,\| {r}^{-1}{b_0^{2}}\|_{L^{3,1}}^2+\|\omega_{0}\|_{L^{\frac{3}{2},1}}^2+t\,\|{r}^{-1}{b_0^{2}}\|_{L^{\frac{3}{2},1}}^2\bigr)e^{CA_0(t)}\}
\end{split}
\end{equation*}
which implies \eqref{3.49}.

Hence, we complete the proof of Proposition \ref{prop3.4}.
\end{proof}

\begin{prop}\label{prop3.5}
Let the initial data $(\omega_0, b_{0})$ satisfy
\begin{align*}
&\omega_0\in L^{3, 1},\,
r^{-1}\omega_0\in L^{{3\over2},1}, \,\partial_r\omega_0 \in L^{\frac{3}{2}},\,  b_{0}\in L^{3, 2}\cap \dot{H}^1,\, r^{-1}b_{0}\in {H}^1.
\end{align*}
Let
$(\omega,b)$ a regular solution
of the system \eqref{1.4}.
Then
\begin{equation}\label{est-romega-1}
\|\partial_r\omega(t)\|_{L^\infty_t(L^{{\frac{3}{2}}})}+
\|\partial_z\partial_r\omega\|_{L^2_t(L^{{\frac{3}{2}}})}
\leq
C(t,\omega_0,b_{0}).
\end{equation}
\end{prop}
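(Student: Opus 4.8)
The plan is to differentiate the vorticity equation in $r$ and run an $L^{3/2}$ energy estimate, exactly in the spirit of Propositions \ref{prop3.1}--\ref{prop3.4}. Applying $\partial_r$ to the first equation of \eqref{1.4} and using $u\cdot\nabla=u^r\partial_r+u^z\partial_z$ gives
\begin{equation*}
\partial_t\partial_r\omega+(u\cdot\nabla)\partial_r\omega-\partial_z^2\partial_r\omega
=-\partial_ru^r\,\partial_r\omega-\partial_ru^z\,\partial_z\omega
-\partial_z\partial_r\Bigl(\frac{b^2}{r}\Bigr)+\partial_r\Bigl(\frac{u^r}{r}\,\omega\Bigr).
\end{equation*}
Since $\tfrac32\in(1,2)$, I would multiply by $|\partial_r\omega|^{-\frac12}\partial_r\omega=|\partial_r\omega|^{\frac12}\sign(\partial_r\omega)$ and integrate; the transport term drops by $\dive u=0$, leaving $\tfrac1p\tfrac{\mathrm d}{\mathrm dt}\|\partial_r\omega\|_{L^{3/2}}^{3/2}+c\,\|\partial_z|\partial_r\omega|^{3/4}\|_{L^2}^2$ on the left ($p=\tfrac32$). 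As in Remark \ref{rmk-1-1} (through Lemma \ref{lem2.2}), this dissipative term controls $\|\partial_z\partial_r\omega\|_{L^2_t(L^{3/2})}$ once $\|\partial_r\omega\|_{L^\infty_t(L^{3/2})}$ is bounded, so it suffices to close the differential inequality for $\|\partial_r\omega\|_{L^{3/2}}$.

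For the algebraic terms I would substitute the structural identities $\partial_ru^r=-\tfrac{u^r}{r}-\partial_zu^z$ and $\partial_ru^z=\partial_zu^r-\omega$ already exploited in Propositions \ref{prop3.3}--\ref{prop3.4}, together with the expansion $\partial_r\bigl(\tfrac{u^r}{r}\omega\bigr)=\tfrac{u^r}{r}\partial_r\omega-2\tfrac{u^r}{r}\tfrac{\omega}{r}-\tfrac{\omega}{r}\partial_zu^z$. The harmless contributions are those proportional to $\tfrac{u^r}{r}|\partial_r\omega|^{3/2}$, bounded by $\|\tfrac{u^r}{r}\|_{L^\infty}\|\partial_r\omega\|_{L^{3/2}}^{3/2}$ via Corollary \ref{cro3.1} and \eqref{3.17}, the term $-2\tfrac{u^r}{r}\tfrac\omega r$, handled by $\|\tfrac{u^r}{r}\|_{L^\infty}\|\tfrac\omega r\|_{L^{3/2}}$, and the piece $\int\partial_zu^z\,|\partial_r\omega|^{3/2}$: integrating the last by parts in $z$ turns it into $-2\int u^z\,|\partial_r\omega|^{3/4}\,\partial_z|\partial_r\omega|^{3/4}$, which, since $\|u^z\|_{L^\infty}\le\|u\|_{L^\infty}\lesssim\|\omega\|_{L^{3,1}}$ by Proposition \ref{prop2.2} and Corollary \ref{cro3.2}, is absorbed into the dissipation by Young's inequality. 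The magnetic forcing is controlled cleanly, away from the vorticity: writing $\tfrac{b^2}{r}=b\cdot\tfrac br$ and using Leibniz together with $\dot H^1(\mathbb R^3)\hookrightarrow L^6(\mathbb R^3)$, each of the four pieces of $\partial_z\partial_r\bigl(b\cdot\tfrac br\bigr)$ pairs one $L^2$ second-order factor ($\partial_z\nabla b$ or $\partial_z\nabla\tfrac br$, in $L^2_t(L^2)$ by Propositions \ref{prop3.3}--\ref{prop3.4}) against one first-order factor in $L^\infty_t(L^2)\hookrightarrow L^\infty_t(L^6)$ ($\nabla b$ or $\nabla\tfrac br$); hence $\|\partial_z\partial_r(\tfrac{b^2}{r})\|_{L^{3/2}}\in L^2_t$ and, paired against $|\partial_r\omega|^{1/2}$ by H\"older, it produces an integrable-in-time source.

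The main obstacle is the remaining quadratic family: the integrals $\int\partial_zu^r\,\partial_z\omega\,|\partial_r\omega|^{1/2}$ and $\int\omega\,\partial_z\omega\,|\partial_r\omega|^{1/2}$ arising from $\partial_ru^z\,\partial_z\omega$, and $\int\tfrac{\omega}{r}\,\partial_zu^z\,|\partial_r\omega|^{1/2}$ from the differentiated stretching term. By Proposition \ref{prop2.2} and Corollary \ref{cro3.2} the available horizontal integrabilities are only $L^3$ (for $\partial_zu^r$, $\partial_zu^z$, $\omega$) and $L^{3/2}$ (for $\partial_z\omega$, $\tfrac\omega r$), while the test weight has $|\partial_r\omega|^{1/2}\in L^3$; the three reciprocal H\"older exponents then sum to $\tfrac43>1$, so a purely isotropic estimate places these terms in $L^1$ rather than $L^{3/2}$. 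This reflects a genuine horizontal integrability deficit that the single vertical dissipation must compensate, and it is exactly where the proof is delicate. The way I would attack it is through anisotropic Lebesgue estimates: the quantities $\partial_z\omega$, $\partial_z\tfrac\omega r$ and $\partial_z\partial_r\omega$ are all controlled in $L^2_t(L^{3/2})$ (Corollaries \ref{cro3.1}, \ref{cro3.2} and the dissipation at hand), so one splits each integral into horizontal and vertical parts, uses one-dimensional Sobolev embedding in the $z$-variable together with the anisotropic interpolation of Lemma \ref{lem2.1}, and matches exponents so that the vertical smoothing supplies the missing integrability, the borderline remainder being reabsorbed into $\|\partial_z|\partial_r\omega|^{3/4}\|_{L^2}^2$ by Young. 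Once this anisotropic bookkeeping is carried out, every contribution is either absorbed on the left or appears as an $L^1_t$ coefficient, and Gronwall's inequality closes the estimate with a constant $C(t,\omega_0,b_0)$ assembled from $A_0,A_1,A_2$ and the bounds of Propositions \ref{prop3.2}--\ref{prop3.4}; the final passage to $\|\partial_z\partial_r\omega\|_{L^2_t(L^{3/2})}$ is then immediate from Lemma \ref{lem2.2}.
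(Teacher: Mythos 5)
Your skeleton is exactly the paper's: differentiate the vorticity equation in $r$, test with $|\partial_r\omega|^{1/2}\sign(\partial_r\omega)$, substitute $\partial_ru^r=-\frac{u^r}{r}-\partial_zu^z$ and $\partial_ru^z=\partial_zu^r-\omega$, integrate $\int\partial_zu^z|\partial_r\omega|^{3/2}\,dx$ by parts in $z$ and absorb it via $\|u^z\|_{L^\infty}\lesssim\|\omega\|_{L^{3,1}}$, bound the magnetic forcing by H\"older with $\dot H^1(\mathbb{R}^3)\hookrightarrow L^6(\mathbb{R}^3)$ and Propositions \ref{prop3.3}--\ref{prop3.4}, then close by Gronwall and Lemma \ref{lem2.2}. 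The gap lies in the three quadratic terms you rightly single out as the heart of the matter, and it is twofold. First, your diagnosis of the terms coming from $g_1\eqdefa-\partial_zu^r\partial_z\omega+\omega\partial_z\omega$ is wrong: they do \emph{not} suffer an unavoidable isotropic deficit; your exponent count fails only because you restricted the admissible bounds to $L^3$ and $L^{3/2}$. The paper closes them isotropically,
\begin{equation*}
\|g_{1}\|_{L^{3/2}}\lesssim\|\partial_zu^r\|_{L^6}\|\partial_z\omega\|_{L^2}+\||\omega|^{\frac12}\|_{L^{6}}\|\partial_z|\omega|^{\frac32}\|_{L^2}\lesssim\|\partial_z\omega\|_{L^2}^2+\|\omega\|_{L^{3}}^{\frac12}\|\partial_z|\omega|^{\frac32}\|_{L^2},
\end{equation*}
using $\|\partial_zu^r\|_{L^6}\lesssim\|\partial_z\omega\|_{L^2}$ from Proposition \ref{prop2.2} and, crucially, the identity $\omega\partial_z\omega=\frac23|\omega|^{1/2}\partial_z|\omega|^{3/2}$, so that the factor gaining integrability is the nonlinear dissipation $\partial_z|\omega|^{3/2}\in L^2_t(L^2)$ supplied by Proposition \ref{prop3.2} at $p=2,3$. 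Your proposed anisotropic treatment of these two terms, by contrast, is unlikely to be executable: both factors already carry a $\partial_z$, so producing an $L^\infty_v$ norm by one-dimensional interpolation in $z$ would require a second vertical derivative ($\partial_z^2\omega$ or $\partial_z^2u^r$), which none of the a priori estimates control.

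Second, for the one term where anisotropy is genuinely needed, $\|\partial_zu^z\frac{\omega}{r}\|_{L^{3/2}}$, your plan coincides with the paper's, but the phrase ``matches exponents so that the vertical smoothing supplies the missing integrability'' is precisely the step that must be proved, and you never verify that the exponents do match. The paper's chain is
\begin{equation*}
\Big\|\partial_zu^z\frac{\omega}{r}\Big\|_{L^{3/2}}\lesssim\Big\|\frac{\omega}{r}\Big\|_{L^{3/2}_h(L^\infty_v)}\|\partial_zu^z\|_{L^\infty_h(L^{3/2}_v)}\lesssim\Big\|\frac{\omega}{r}\Big\|_{L^{3/2}}^{\frac13}\Big\|\partial_z\frac{\omega}{r}\Big\|_{L^{3/2}}^{\frac23}\|\partial_z\omega\|_{L^{3/2}}^{\frac23}\Bigl(\|\partial_z\partial_r\omega\|_{L^{3/2}}^{\frac13}+\Big\|\partial_z\frac{\omega}{r}\Big\|_{L^{3/2}}^{\frac13}\Bigr),
\end{equation*}
and the essential point is quantitative: the dangerous factor $\|\partial_z\partial_r\omega\|_{L^{3/2}}$ enters with power $\frac13$ only. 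After Lemma \ref{lem2.2}, $\|\partial_z\partial_r\omega\|_{L^{3/2}}\lesssim\|\partial_z|\partial_r\omega|^{3/4}\|_{L^2}\|\partial_r\omega\|_{L^{3/2}}^{1/4}$, its contribution to the energy inequality becomes $\|\partial_z|\partial_r\omega|^{3/4}\|_{L^2}^{1/3}\|\partial_r\omega\|_{L^{3/2}}^{7/12}$ times known coefficients, which Young's inequality (exponents $6$ and $6/5$) absorbs into the dissipation, leaving the Gronwallable remainder $\|\frac{\omega}{r}\|_{L^{3/2}}^{2/5}\|\partial_z\frac{\omega}{r}\|_{L^{3/2}}^{4/5}\|\partial_z\omega\|_{L^{3/2}}^{4/5}\|\partial_r\omega\|_{L^{3/2}}^{7/10}$ with an $L^1_t$ coefficient. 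Had the power of the dissipative factor come out too large, neither absorption nor Gronwall would close; checking this bookkeeping is the substance of the proposition, and it is absent from your proposal.
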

\begin{proof}
Acting the operator $\partial_r$ to the first equation of \eqref{1.4} yields
\begin{equation}\label{omega-eqns-r-1}
\begin{split}
\partial_{t} \partial_r\omega+(u\cdot\nabla)\partial_r\omega-\partial_{z}^{2}\partial_r\omega=-\partial_{z}\partial_r(\frac{b^{2}}{r})+\partial_r(\frac{ u^{r}}{r}\omega)+(\frac{u^r}{r}+\partial_zu^z)\partial_r\omega+g_1,
\end{split}
\end{equation}
where $g_{1}:=-\partial_zu^r\partial_z\omega+\omega\partial_z\omega$. Multiplying the equation above by  $\partial_r\omega$ by $|\partial_r\omega|^{\frac 12}{\rm sign} \,\,(\partial_r\omega)$ and integrating the result equation, we obtain
\begin{equation*} 
\begin{split}
&{2\over 3}{\mathrm{d}\over \mathrm{d}t}\|\partial_r\omega\|_{L^{\frac{3}{2}}}^{\frac{3}{2}}
+{8\over 9}
\|\partial_z|\partial_r\omega|^{3\over4}\|_{L^2}^2
\leq
2\|{u^r\over r}\|_{L^\infty}\|\partial_r\omega\|_{L^{\frac{3}{2}}}^{\frac{3}{2}}
+\int_{\mathbb{R}^3}\partial_zu^z|\partial_r\omega|^{\frac{3}{2}}\mathrm{d}x
\\
&\quad
+\bigl(2\|{u^r\over r}\|_{L^\infty}\|{\omega\over r}\|_{L^{\frac{3}{2}}}
+\|\partial_zu^z{\omega\over r}\|_{L^{\frac{3}{2}}}+\|g_{1}\|_{L^{\frac{3}{2}}}\bigr)
\|\partial_r\omega\|_{L^{\frac{3}{2}}}^{\frac{1}{2}}+\int_{\mathbb{R}^3}\partial_{z}\partial_{r}(\frac{b^2}{r})|\partial_r\omega|^{\frac{1}{2}}\mathrm{d}x,
\end{split}
\end{equation*}
Integrating by parts and using the Cauchy-Schwartz inequality, we get
\begin{equation*} 
\begin{split}
\int_{\mathbb{R}^3}\partial_zu^z|\partial_r\omega|^{\frac{3}{2}}\mathrm{d}x=-2\int_{\mathbb{R}^3} u^z
|\partial_r\omega|^{\frac{3}{4}}
\partial_z|\partial_r\omega|^{\frac{3}{4}}\mathrm{d}x
\le
2\|u^z\|_{L^\infty}\Big\|\partial_z|
\partial_r\omega|^{\frac{3}{4}}\Big\|_{L^2}
\|\partial_r\omega\|_{L^{\frac{3}{2}}}^{\frac{3}{4}}
\end{split}
\end{equation*}
and
\begin{equation*} 
\begin{split}
&\int_{\mathbb{R}^3}\partial_{z}\partial_{r}(\frac{b^2}{r})|\partial_r\omega|^{\frac{1}{2}}\,dx
=-\int_{\mathbb{R}^3}\partial_{z}(\frac{b^2}{r^2})|\partial_r\omega|^{\frac{1}{2}}\,dx+2\int_{\mathbb{R}^3}\frac{b}{r}\partial_{z}\partial_{r}b|\partial_r\omega|^{\frac{1}{2}}\mathrm{d}x+2\int_{\mathbb{R}^3}\partial_{z}\frac{b}{r}\partial_{r}b|\partial_r\omega|^{\frac{1}{2}}\mathrm{d}x\notag\\
&\lesssim
\Bigl(\|\partial_{z}(\frac{b^2}{r^2})\|_{L^{\frac{3}{2}}}+\|\frac{b}{r}\|_{L^{6}}\|\partial_{z}\partial_{r}b\|_{L^2}+\|\partial_{z}\frac{b}{r}\|_{L^{6}}\|\partial_{r}b\|_{L^2}\Bigr)\|\partial_r\omega\|_{L^{\frac{3}{2}}}^{\frac{1}{2}}\\
&\lesssim
\Bigl(\|\partial_{z}(\frac{b^2}{r^2})\|_{L^{\frac{3}{2}}}+\|\nabla\frac{b}{r}\|_{L^{2}}\|\partial_{z}\partial_{r}b\|_{L^2}+\|\partial_{z}\nabla\frac{b}{r}\|_{L^{2}}\|\partial_{r}b\|_{L^2}\Bigr)\|\partial_r\omega\|_{L^{\frac{3}{2}}}^{\frac{1}{2}}.
\end{split}
\end{equation*}
As a consequence, we have
\begin{equation}\label{3.37}
\begin{split}
&\frac{d}{dt}\|\partial_r\omega\|_{L^{\frac{3}{2}}}^{\frac{3}{2}}+
\|\partial_z|\partial_r\omega|^{\frac{3}{4}}\|_{L^2}^2\\
&\lesssim
\Bigl(\|{u^r\over r}\|_{L^\infty}+\|u^z\|_{L^\infty}^2\Bigr)
\|\partial_r\omega\|_{L^{\frac{3}{2}}}^{\frac{3}{2}}
+\Bigl(\|\partial_zu^z{\omega\over r}\|_{L^{\frac{3}{2}}}
+\|{u^r\over r}\|_{L^\infty}\|{\omega\over r}\|_{L^{\frac{3}{2}}}
+\|g_{1}\|_{L^{\frac{3}{2}}}\Bigr)
\|\partial_r\omega\|_{L^{\frac{3}{2}}}^{\frac{1}{2}}\\
&\qquad +\Bigl(\|\partial_{z}(\frac{b^2}{r^2})\|_{L^{\frac{3}{2}}}+\|\nabla\frac{b}{r}\|_{L^{2}}\|\partial_{z}\partial_{r}b\|_{L^2}+\|\partial_{z}\nabla\frac{b}{r}\|_{L^{2}}\|\partial_{r}b\|_{L^2}\Bigr)\|\partial_r\omega\|_{L^{\frac{3}{2}}}^{\frac{1}{2}}.
\end{split}
\end{equation}
Thanks to H\"{o}lder's inequality, Proposition \ref{prop2.2}, and the interpolation inequality, we have
\begin{equation*}
\begin{split}
\|\partial_zu^z{\omega\over r}\|_{L^{\frac{3}{2}}}
&\lesssim
\|{\omega\over r}\|_{L^{\frac{3}{2}}_h(L^\infty_v)}
\|\partial_zu^z\|_{L^\infty_h(L^{\frac{3}{2}}_v)}\lesssim
\|{\omega\over r}\|_{L^{\frac{3}{2}}}^{{1\over 3}}
\|\partial_z{\omega\over r}\|_{L^{\frac{3}{2}}}^{{2\over 3}}
\|\partial_z\omega\|_{L^{\frac{3}{2}}}^{\frac{2}{3}}
\Bigl(\|\partial_z\partial_r\omega\|_{L^{\frac{3}{2}}}^{\frac{1}{3}}
+\|\partial_z{\omega\over r}\|_{L^{\frac{3}{2}}}^{{1\over 3}}\Bigr)
\\&
\lesssim
\|{\omega\over r}\|_{L^{\frac{3}{2}}}^{{1\over 3}}
\|\partial_z{\omega\over r}\|_{L^{\frac{3}{2}}}
\|\partial_z\omega\|_{L^{\frac{3}{2}}}^{\frac{2}{3}}
+\|{\omega\over r}\|_{L^{\frac{3}{2}}}^{{1\over 3}}
\|\partial_z{\omega\over r}\|_{L^{\frac{3}{2}}}^{{2\over 3}}
\|\partial_z\omega\|_{L^{\frac{3}{2}}}^{\frac{2}{3}}
\|\partial_z\partial_r\omega\|_{L^{\frac{3}{2}}}^{\frac{1}{3}},
\end{split}
\end{equation*}
and consequently  by Lemma \ref{lem2.2} and H\"{o}lder's inequality, we obtain
\begin{equation*}
\begin{split}
\|\partial_zu^z{\omega\over r}\|_{L^{\frac{3}{2}}}
\|\partial_r\omega\|_{L^{\frac{3}{2}}}^{\frac{1}{2}}
\lesssim &
\|{\omega\over r}\|_{L^{\frac{3}{2}}}^{{1\over 3}}
\|\partial_z{\omega\over r}\|_{L^{\frac{3}{2}}}
\|\partial_z\omega\|_{L^{\frac{3}{2}}}^{\frac{2}{3}}
\|\partial_r\omega\|_{L^{\frac{3}{2}}}^{\frac{1}{2}}
\\&
\quad+
\|{\omega\over r}\|_{L^{\frac{3}{2}}}^{{1\over 3}}
\|\partial_z{\omega\over r}\|_{L^{\frac{3}{2}}}^{{2\over 3}}
\|\partial_z\omega\|_{L^{\frac{3}{2}}}^{\frac{2}{3}}
\|\partial_z|\partial_r\omega|^{3\over4}\|_{L^2}^{\frac{1}{3}}
\|\partial_r\omega\|_{L^{\frac{3}{2}}}^{\frac{7}{12}},
\end{split}
\end{equation*}
which follows
\begin{equation*}
\begin{split}
\|\partial_zu^z{\omega\over r}\|_{L^{\frac{3}{2}}}
\|\partial_r\omega\|_{L^{\frac{3}{2}}}^{\frac{1}{2}}
&\leq
\varepsilon\|\partial_z|\partial_r\omega|^{3\over4}\|_{L^2}^2
+C\|{\omega\over r}\|_{L^{\frac{3}{2}}}^{{1\over 3}}
\|\partial_z{\omega\over r}\|_{L^{\frac{3}{2}}}
\|\partial_z\omega\|_{L^{\frac{3}{2}}}^{\frac{2}{3}}
\|\partial_r\omega\|_{L^{\frac{3}{2}}}^{\frac{1}{2}}
\\&
\qquad+C_{\varepsilon}
\|{\omega\over r}\|_{L^{\frac{3}{2}}}^{{2\over 5}}
\|\partial_z{\omega\over r}\|_{L^{\frac{3}{2}}}^{{4\over 5}}
\|\partial_z\omega\|_{L^{\frac{3}{2}}}^{\frac{4}{5}}
\|\partial_r\omega\|_{L^{\frac{3}{2}}}^{\frac{7}{10}}.
\end{split}
\end{equation*}
On the other hand, thanks to Proposition \ref{prop2.2} again, we find
\begin{equation*}
\begin{split}
 \|g_{1}\|_{L^\frac{3}{2}} \lesssim 
 \|\partial_zu^r\|_{L^6}
\|\partial_z\omega\|_{L^2}+\||\omega|^{1\over2}\|_{L^{6}}
\|\partial_z|\omega|^{3\over2}
\|_{L^2}\lesssim \|\partial_z\omega\|_{L^2}^2+\|\omega\|_{L^{3}}^{{1\over2}}
\|\partial_z|\omega|^{3\over2}\|_{L^2}.
\end{split}
\end{equation*}
Thus in view of \eqref{3.37}, we obtain
\begin{equation*}
\begin{split}
&\frac{d}{dt}\|\partial_r\omega\|_{L^{\frac{3}{2}}}^{\frac{3}{2}}+
\|\partial_z|\partial_r\omega|^{3\over4}\|_{L^2}^2\lesssim
\Bigl(\|{u^r\over r}\|_{L^\infty}+\|u^z\|_{L^\infty}^2\Bigr)
\|\partial_r\omega\|_{L^{\frac{3}{2}}}^{\frac{3}{2}}+\|{\omega\over r}\|_{L^{\frac{3}{2}}}^{{2\over 5}}
\|\partial_z{\omega\over r}\|_{L^{\frac{3}{2}}}^{{4\over 5}}
\|\partial_z\omega\|_{L^{\frac{3}{2}}}^{\frac{4}{5}}
\|\partial_r\omega\|_{L^{\frac{3}{2}}}^{\frac{7}{10}}\\
&
+\Bigl(\|{u^r\over r}\|_{L^\infty}\|{\omega\over r}\|_{L^{\frac{3}{2}}}
+\|\partial_z\omega\|_{L^2}^2+\|\omega\|_{L^{3}}^{{1\over2}}
\|\partial_z|\omega|^{3\over2}\|_{L^2} 
\\&\qquad\qquad\qquad
\qquad+ \|\partial_{z}(\frac{b^2}{r^2})\|_{L^{\frac{3}{2}}}+\|\nabla\frac{b}{r}\|_{L^{2}}\|\partial_{z}\partial_{r}b\|_{L^2}
+\|\partial_{z}\nabla\frac{b}{r}\|_{L^{2}}\|\partial_{r}b\|_{L^2}\Bigr)\|\partial_r\omega\|_{L^{\frac{3}{2}}}^{\frac{1}{2}},
\end{split}
\end{equation*}
from which, together with Lemma \ref{lem2.2} and Proposition \ref{prop2.2}, one can see that
\begin{equation*}
\begin{split}
&\frac{d}{dt}\|\partial_r\omega\|_{L^{\frac{3}{2}}}^{\frac{3}{2}}+
\|\partial_z|\partial_r\omega|^{3\over4}\|_{L^2}^2\lesssim
\Bigl(\|\partial_z{\omega\over r}\|_{L^{\frac{3}{2}, 1}}+\|\omega\|_{L^{3, 1}}^2+\|{\omega\over r}\|_{L^{\frac{3}{2}}}^{{2\over 5}}
\|\partial_z{\omega\over r}\|_{L^{\frac{3}{2}}}^{{4\over 5}}
\|\partial_z\omega\|_{L^{\frac{3}{2}}}^{\frac{4}{5}}\Bigr)
\|\partial_r\omega\|_{L^{\frac{3}{2}}}^{\frac{3}{2}}\\
&
+\Bigl(\|\partial_z{\omega\over r}\|_{L^{\frac{3}{2}, 1}}\|{\omega\over r}\|_{L^{\frac{3}{2}}}+\|{\omega\over r}\|_{L^{\frac{3}{2}}}^{{2\over 5}}
\|\partial_z{\omega\over r}\|_{L^{\frac{3}{2}}}^{{4\over 5}}
\|\partial_z\omega\|_{L^{\frac{3}{2}}}^{\frac{4}{5}}
+\|\partial_z\omega\|_{L^2}^2+\|\omega\|_{L^{3}}^{{1\over2}}
\|\partial_z|\omega|^{3\over2}\|_{L^2} 
\\&\qquad\qquad\qquad
\qquad+ \|\partial_{z}(\frac{b^2}{r^2})\|_{L^{\frac{3}{2}}}+\|\nabla\frac{b}{r}\|_{L^{2}}\|\partial_{z}\partial_{r}b\|_{L^2}
+\|\partial_{z}\nabla\frac{b}{r}\|_{L^{2}}\|\partial_{r}b\|_{L^2}\Bigr)\|\partial_r\omega\|_{L^{\frac{3}{2}}}^{\frac{1}{2}}.
\end{split}
\end{equation*}
Then Gronwall's inequality implies that
\begin{equation}\label{3.38}
\begin{aligned}
&\|\partial_r\omega\|_{L^\infty_t(L^{\frac{3}{2}})}
+
\|\partial_z\partial_r\omega\|_{L^2_t(L^{\frac{3}{2}})}\\
&
\leq
C
\exp\{C\bigl(
\|\partial_z{\omega\over r}\|_{L^1_t(L^{\frac{3}{2}, 1})}+\|\omega\|_{L^2_t(L^{3, 1})}^2
+\|{\omega\over r}\|_{L^2_t(L^{\frac{3}{2}})}^{{2\over 5}}
\|\partial_z{\omega\over r}\|_{L^2_t(L^{\frac{3}{2}})}^{{4\over 5}}
\|\partial_z\omega\|_{L^2_t(L^{\frac{3}{2}})}^{\frac{4}{5}}\bigr)\}
\\&
\times
\Bigl(\|\partial_r\omega_0\|_{L^{\frac{3}{2}}}
+ \|\partial_z{\omega\over r}\|_{L^1_t(L^{\frac{3}{2}, 1})}\|{\omega\over r}\|_{L^\infty_t(L^{\frac{3}{2}})}
+\|{\omega\over r}\|_{L^2_t(L^{\frac{3}{2}})}^{{2\over 5}}
\|\partial_z{\omega\over r}\|_{L^2_t(L^{\frac{3}{2}})}^{{4\over 5}}
\|\partial_z\omega\|_{L^2_t(L^{\frac{3}{2}})}^{\frac{4}{5}}\\
&\qquad+\|\partial_z\omega\|_{L^{2}_t(L^2)}^2+\|\omega\|_{L^\infty_t(L^{3})}^{{1\over2}}
\|\partial_z|\omega|^{3\over2}\|_{L^1_t(L^2)}+\|\partial_{z}(\frac{b^2}{r^2})\|_{L^1_t(L^{\frac{3}{2}})}\\
&\qquad 
+\|\nabla\frac{b}{r}\|_{L^2_t(L^{2})}\|\partial_{z}\partial_{r}b\|_{L^2_t(L^2)}
+\|\partial_{z}\nabla\frac{b}{r}\|_{L^2_t(L^2)}\|\partial_{r}b\|_{L^2_t(L^2)} \Big).
\end{aligned}
\end{equation}
Therefore, inserting \eqref{est-omega-32-1}, \eqref{est-nabla-br-1}, and \eqref {3.49} into \eqref{3.38} implies \eqref{est-romega-1}. We then complete the proof of Proposition \ref{prop3.5}.
\end{proof}

\renewcommand{\theequation}{\thesection.\arabic{equation}}
\setcounter{equation}{0}

\section{Proof of Theorem \ref{thm1.1}}\label{sect-4}

\subsection{Existence part of the proof}
First of all, we note that $\omega_0, \, r^{-1}\omega_0\in L^{\frac{3}{2}, 1}(\mathbb{R}^3)$ ensures that $u_0,\,r^{-1}u_0\in L^{3, 1}(\mathbb{R}^3).$ Let $u_0\in L^{3, 1}(\mathbb{R}^3)$ be an axisymmetric vector field without swirl such that $r^{-1}u_0\in L^{3, 1}(\mathbb{R}^3)$, $\omega_0\in L^{\frac{3}{2}, 1}(\mathbb{R}^3)$, and ${r}^{-1}{\omega_0}\in L^{\frac{3}{2}, 1}(\mathbb{R}^3)$, and assume that the initial axisymmetric data $b_{0}\in L^{3, 2}(\mathbb{R}^3)$ with $r^{-1} b_{0}\in L^{3, 2}(\mathbb{R}^3)$.

Let $J_n$ the operator which localizes in low frequencies defined by 
\begin{equation*}
\widehat{J_n f}(\xi)\eqdefa \chi(2^{-n}\xi)\widehat{f}(\xi) \quad (\forall\,\, n \in \mathbb{Z}),
\end{equation*}
 where $\chi(\xi)$ is a radial and regular function, equal to which to $1$ on a ball around zero, and $\widehat{f}(\xi)$ is the Fourier transform of $f$. Since $(u_0, \,B_0)$ is axisymmetrical with the form \eqref{initial-axi-1}, we know that $(J_nu_0, \, J_nB_0)$ is also axisymmetrical with the form \eqref{initial-axi-1} and also is regular (see for example \cite{AHK}). So, by \cite{WG2022}, there exists a unique regular and global in time axisymmetrical solution $(u^n, \,B^n)$ (with the form \eqref{initial-axi-1}) to the system
 \begin{equation*}
	\begin{cases}
	\partial_{t}u^n+u^n\cdot\nabla u^n-\nu_z\partial_{z}^{2} u^n+\nabla\Pi^n =B^n\cdot\nabla B^n,\\
	\partial_{t}b^n+u^n\cdot\nabla b^n-\mu_z\partial_{z}^{2}B^n= B^n\cdot\nabla u^n,\\
	\dive u^n=\dive B^n= 0,\\
	(u^n,\,b^n)|_{t=0}=(J_nu_{0},\,J_nB_{0}),
	\end{cases}
	\end{equation*}
that is,
 \begin{equation}\label{1.4-n-1}
\begin{cases}
&\partial_{t} \omega^n+\nabla\cdot(\omega^nu^n)-\frac{ (u^n)^{r}}{r}\omega^n-\partial_{z}^{2}\omega^n=-\partial_{z}(\frac{(b^n)^{2}}{r}),\\
&\partial_t \frac{ b^n}{r} +\nabla\cdot(\frac{ b^n}{r} u^n)-\partial_{z}^{2} \frac{ b^n}{r} =0,\\
&u^n=(-\Delta)^{-1}\nabla\times\,(\omega^ne_{\theta}),\\
&(\omega^n,\,b^n)|_{t=0}=(J_n\omega_{0},\,J_nb_{0}).
\end{cases}
\end{equation}
Notice that $J_n\omega_0$ and $\frac{J_n\omega_0}{r}$ are uniformly bounded in
$L^{{3\over 2},1}(\mathbb{R}^3)$, and $J_n b_{0}$ and $\frac{J_n b_{0}}{r}$ are uniformly bounded in $L^{3,2}(\mathbb{R}^3)$, we then obtain from Propositions \ref{prop3.1} and \ref{prop3.2} that:
 \begin{equation}\label{est-unif-bdd-1}
\begin{split}
 &\{(u^n, \omega^n, b^n)\}_{n\in \mathbb{N}} \quad \mbox{ is uniformly bounded  (u.b. for short) in}\\
  &\qquad\qquad\qquad\qquad L^\infty_{loc}(\mathbb{R}^+; \dot{W}^{1, \frac{3}{2}})\times L^\infty_{loc}(\mathbb{R}^+; L^{\frac{3}{2}, 1})\times L^\infty_{loc}(\mathbb{R}^+; L^{3, 2});\\
 &\{(\frac{(u^n)^r}{r}, \frac{b^n}{r})\}_{n\in \mathbb{N}}\quad \mbox{ is u.b. in}\quad L^\infty_{loc}(\mathbb{R}^+; L^{3, 1})\times L^\infty_{loc}(\mathbb{R}^+; L^{3, 2});\\
 &\{(\partial_zu^n, \partial_z\omega^n)\}_{n\in \mathbb{N}}\quad \mbox{ is u.b. in}\quad L^2_{loc}(\mathbb{R}^+; W^{1,\frac{3}{2}}) \times L^2_{loc}(\mathbb{R}^+; L^{\frac{3}{2}, 1});\\
&\{\frac{(u^n)^r}{r}\}_{n\in \mathbb{N}}\quad \mbox{ is u.b. in}\quad L^2_{loc}\bigl(\mathbb{R}^+;\,L^{\infty}\bigr);\\
&\{(\partial_t u^n, \partial_t\frac{ b^n}{r})\}_{n\in \mathbb{N}}\quad \mbox{ is u.b. in} \quad  L^1_{loc}(\mathbb{R}^+; L^{\frac{3}{2}}) \times L^1_{loc}(\mathbb{R}^+; \dot{W}^{-1,\frac{3}{2}}).
   \end{split}
\end{equation}   
By standard compactness arguments and the  Arzela-Ascoli lemma, we can obtain up to a subsequence denoted again by $(u^n, b^n)$, that $(u^n,b^n)$ converges strongly to  $(u, b)$ in $C_{loc}(\mathbb{R}^+; L^2_{loc})\times C_{loc}(\mathbb{R}^+; \dot{H}^{-\frac{1}{2}}_{loc})$. Interpolating with the fact that  $(u_n, \frac{b^n}{r})$ has uniform bound in  \eqref{est-unif-bdd-1}, we found that $u_n\to u$ in $L^2_{loc}(\mathbb{R}^+; H^{\frac14}_{loc}(\mathbb{R}^3))$ and $\frac{b^n}{r}\to \frac{b}{r}$ in $L^2_{loc}(\mathbb{R}^+; L^{\frac32}_{loc}(\mathbb{R}^3))$. This allows to pass to the limit in the nonlinear terms and we conclude that $(\omega^nu^n\to \omega u , \frac{b^n}{r}u^n\to \frac{b}{r}\,u, \frac{b^n}{r}b^n\to \frac{b}{r}b, \frac{ (u^n)^{r}b^n}{r}\to \frac{ u^{r}b}{r})$ in ${\mathcal D}'$. Finally, by passing to the limit in the system \eqref{1.4-n-1} we obtain a global in time,  axisymmetric solution, without swirl,  $(u, B)$ of the system \eqref{1.2}.

\subsection{Uniqueness part of the proof}

In order to prove the uniqueness of the solution for the system \eqref{1.4}, let
$(\omega_1, \,b_1)$ and $(\omega_2,\, b_2)$ be two solutions, and  define
$(\delta\omega, \delta b)\eqdefa (\omega_2 -\omega_1, b_2-b_1)$ their differences, which verifies the following system:
\begin{equation}\label{differ-eqns-1}
  \begin{cases}
&\partial_t\delta\omega+(u_2\cdot \nabla)\delta\omega
-\partial_{z}^2\delta\omega=-( \delta u\cdot \nabla)\omega_1+{u^r_2\over r}\delta\omega
+{ \delta u^r\over r}\omega_1+\partial_{z}[\delta b(\frac{b_{1}+b_{2}}{r})],\\
&\partial_{t}\delta b+(u_{2}\cdot\nabla)\delta b-\partial_{z}^{2}\delta b=-(\delta u\cdot\nabla)b_{1}+\frac{u_{2}^{r}}{r}\delta b+\frac{\delta u^{r}}{r}b_{1},\\
&(\delta\omega,\delta b)_{|t=0}=(0,0).
  \end{cases}
\end{equation} 
The functional framework where we control the differences of the two solutions is  $L^p(\mathbb{R}^3)$ with
${6\over5}\le p<{3\over2}.$
Applying the energy method to the first equation in \eqref{differ-eqns-1} implies
 \begin{equation*}
\begin{split}
\frac{1}{p}\frac{d}{dt}\|\delta\omega\|_{L^{p}}^{p}
+&{4(p-1)\over p^2}
\Big\|\partial_z\vert\delta\omega\vert^{\frac{p}{2}}
\Big\|_{L^2}^2\le
\|{u^r_2\over r}\|_{L^\infty}\|\delta\omega\|_{L^{p}}^{p}
+\|{\omega_1 \delta u^r\over r}\|_{L^{p}}\|\delta\omega\|_{L^{p}}^{p-1}
\\&
+\|( \delta u\cdot \nabla)\omega_1\|_{L^{p}}
\|\delta\omega\|_{L^{p}}^{p-1}+\|\partial_{z}\delta b\|_{L^{\frac{6p}{6-p}}}
\|\frac{b_{1}+b_{2}}{r}\|_{L^{6}}\|\delta\omega\|_{L^{p}}^{p-1}\\
&+\|\delta b\|_{L^{\frac{6p}{6-p}}}\|\partial_{z}\frac{b_{1}+b_{2}}{r}\|_{L^{6}}\|\delta\omega\|_{L^{p}}^{p-1}.
\end{split}
\end{equation*}
Using H\"older's inequality, the Sobolev embedding, Proposition \ref{prop2.2}, and Lemma \ref{lem2.1}, one gets
 \begin{equation*}
\begin{split}
&\|{\omega_1 \delta u^r\over r}\|_{L^{p}}
+\|( \delta u\cdot\nabla)\omega_1\|_{L^{p}}\\
&\le
\bigl(\|{\omega_1\over r}\|_{L^{3\over2}}
+\|\partial_r\omega_1\|_{L^{3\over2}}\bigr)
\| \delta u^r\|_{L^{3p\over3-2p}}
+\|\partial_z\omega_1\|_{L^6_h(L^{\frac{3}{2}}_v)}
\|\delta u^z\|_{L^{6p\over6-p}_h(L^{\frac{3p}{3-2p}}_v)}
\\&
\lesssim
\bigl(\|{\omega_1\over r}\|_{L^{3\over2}}
+\|\partial_r\omega_1\|_{L^{3\over2}}\bigr)
\|\partial_z\delta\omega\|_{L^{p}}
+\|\partial_z\partial_r\omega_1\|_{L^{\frac{3}{2}}}
\| \delta u^z\|_{L^{6p\over6-p}_h(L^{\frac{3p}{3-2p}}_v)}
\\&
\lesssim
\Bigl(\|{\omega_1\over r}\|_{L^{3\over2}}
+\|\partial_r\omega_1\|_{L^{3\over2}}\Bigr)
\Big\|\partial_z\vert\delta\omega\vert^{p\over2}\Big\|_{L^2}
\Vert\delta\omega\Vert_{L^{p}}^{2-p\over2}
+\|\partial_z\partial_r\omega_1\|_{L^{\frac{3}{2}}}
\|\delta u^z\|_{L^{6p\over6-p}_h(L^{\frac{3p}{3-2p}}_v)}.
\end{split}
\end{equation*}
Concerning  $\| \delta u^z\|_{L^{6p\over6-p}_h(L^{\frac{3p}{3-2p}}_v)}$
and using the identity $\Delta \delta u^z
=\partial_r\delta\omega+r^{-1}\delta\omega$, we obtain by integration by parts that
$
| \delta u^z|
\lesssim
\frac{1}{|\cdot|^2}\star|\delta\omega|.
$
Then, using the convolution law, we get
 \begin{equation*}
\begin{split}
\| \delta u^z\|_{L^{6p\over6-p}_h(L^{\frac{3p}{3-2p}}_v)}
\lesssim
\|\delta\omega\|_{L^{{p},{6p\over6-p}}_h
(L^p_v)}
\lesssim
\|\delta\omega\|_{L^p}.
\end{split}
\end{equation*}
The Young inequality implies that
\begin{equation*}
\begin{split}
&\frac{d}{dt}\|\delta\omega\|_{L^{p}}^{p}+
{2(p-1)\over p}\Big\|\partial_z\vert\delta\omega\vert^{\frac{p}{2}}
\Big\|_{L^2}^2
\lesssim
\Big(\|{u^r_2\over r}\|_{L^\infty}+\|{\omega_1\over r}\|_{L^{3\over2}}^2
+\|\partial_r\omega_1\|_{L^{3\over2}}^2
+\|\partial_z\partial_r\omega_1
\|_{L^{\frac{3}{2}}}\Big)
\|\delta\omega\|_{L^{p}}^{p}\notag\\
&\qquad+\Big(\|\partial_z\delta b\|_{L^{\frac{6p}{6-p}}} \|(\frac{b_{1}}{r},\,\frac{b_{2}}{r})\|_{L^{6}}+\|\delta b\|_{L^{\frac{6p}{6-p}}}
 \|\partial_{z}(\nabla\frac{b_{1}}{r},\,\nabla\frac{b_{2}}{r})\|_{L^{2}}\Big)\|\delta\omega\|_{L^{p}}^{p-1},
\end{split}
\end{equation*}
 which follows that
 \begin{equation*}
\begin{split}
&\|\delta\omega\|_{L^\infty_t(L^{p})}^{p}+
\Big\|\partial_z\vert\delta\omega\vert^{\frac{p}{2}}
\Big\|_{L^2_t(L^2)}^2
\leq C\mathcal{F}_1(t)\|\delta\omega\|_{L^\infty_t(L^{p})}^{p}\\
&+C\Big(\|\partial_z\delta b\|_{L^2_t(L^{\frac{6p}{6-p}})} \|(\frac{b_{1}}{r},\,\frac{b_{2}}{r})\|_{L^2_t(L^{6})}+\|\delta b\|_{L^\infty_t(L^{\frac{6p}{6-p}})}
 \|\partial_{z}(\nabla\frac{b_{1}}{r},\,\nabla\frac{b_{2}}{r})\|_{L^1_t(L^{2})}\Big)\|\delta\omega\|_{L^\infty_t(L^{p})}^{p-1},
\end{split}
\end{equation*}
where
\begin{equation}\label{def-f1}
\begin{split}
&\mathcal{F}_1(t)\eqdefa \|{u^r_2\over r}\|_{L^1_t(L^\infty)}+\|{\omega_1\over r}\|_{L^2_t(L^{3\over2})}^2
+\|\partial_r\omega_1\|_{L^2_t(L^{3\over2})}^2
+\|\partial_z\partial_r\omega_1
\|_{L^1_t(L^{\frac{3}{2}})}.
\end{split}
\end{equation}
Hence, due to the fact that $\mathcal{F}_1(t) \rightarrow 0$ as $t\rightarrow 0^+$, we obtain that, there is $\epsilon_1>0$ so small that, if $t\in [0, \epsilon_1]$, there holds
\begin{equation}\label{4.21-34d}
  \begin{split}
&\|\delta\omega\|_{L^\infty_t(L^p)}+\Big\|\partial_{z}|\delta \omega|^{\frac{p}{2}}\Big\|_{L^2_t(L^{2})}^{\frac{2}{p}} \leq C\mathcal{F}_2(t)\|\partial_z\delta b\|_{L^2_t(L^{\frac{6p}{6-p}})}+C \mathcal{F}_3(t) \|\delta b\|_{L^\infty_t(L^{\frac{6p}{6-p}})},
  \end{split}
  \end{equation}
  where
\begin{equation}\label{def-f23}
  \begin{split}
&\mathcal{F}_2(t)\eqdefa \|(\frac{b_{1}}{r},\,\frac{b_{2}}{r})\|_{L^2_t(L^{6})},\quad\mathcal{F}_3(t)\eqdefa \|\partial_{z}(\nabla\frac{b_{1}}{r},\,\nabla\frac{b_{2}}{r})\|_{L^1_t(L^{2})}.
  \end{split}
  \end{equation}
  Due to \eqref{psmall-1}, we arrive at
  \begin{equation}\label{4.21-34e}
  \begin{split}
&\|\delta\omega\|_{L^\infty_t(L^p)}+\|\partial_{z}\delta \omega\|_{L^2_t(L^{p})}\\
&\leq C\mathcal{F}_2(t)\|\partial_z|\delta b|^{\frac{3p}{6-p}}\|_{L^2_t(L^{2})}\|\delta b\|_{L^\infty_t(L^{\frac{6p}{6-p}})}^{1-\frac{3p}{6-p}}+C \mathcal{F}_3(t) \|\delta b\|_{L^\infty_t(L^{\frac{6p}{6-p}})}.
  \end{split}
  \end{equation}
On the other hand, from the $\delta b$ equation of \eqref{differ-eqns-1}, the energy estimates imply
\begin{align}\label{4.3}
\frac{\mathrm{d}}{\mathrm{d}t}\|\delta b\|_{L^{\frac{6p}{6-p}}}^{\frac{6p}{6-p}}+&
\Big\|\partial_{z}|\delta b|^{\frac{3p}{6-p}}\Big\|_{L^{2}}^{2}
\notag\\&
\lesssim
\bigl(\|(\delta u\cdot\nabla)b_{1}\|_{L^{\frac{6p}{6-p}}}+\|\frac{u_{2}^{r}}{r}\|_{L^{\infty}}\|\delta b\|_{L^{\frac{6p}{6-p}}}+\|\frac{\delta u^{r}}{r}b_{1}\|_{L^{\frac{6p}{6-p}}}\bigr)
\|\delta b\|_{L^{\frac{6p}{6-p}}}^{\frac{7p-6}{6-p}}.
\end{align}
Again, using H\"older inequality, Proposition \ref{prop2.2}, Sobolev
embedding, we have
\begin{align*}
&
\|(\delta u\cdot\nabla)b_{1}\|_{L^{\frac{6p}{6-p}}}
\lesssim
\|\partial_z\delta \omega\|_{L^p}\|\partial_rb_{1}\|_{L^2}+\|\delta \omega\|_{L^p}\|\partial_z\nabla\,b_{1}\|_{L^2},
\\&
\|\frac{\delta u^{r}}{r}b_{1}\|_{L^{\frac{6p}{6-p}}}
\lesssim
\|\delta u^{r}\|_{L^{\frac{3p}{3-2p}}}\|\frac{b_1}{r}\|_{L^2}
\lesssim
\|\partial_z\delta\omega\|_{L^{p}}\|\frac{b_1}{r}\|_{L^2}.
\end{align*}
Notice that
\begin{equation}\label{u-b1-1-1}
  \begin{split}
\|(\delta u\cdot\nabla)b_{1}\|_{L^{\frac{6p}{6-p}}}&\lesssim\|\delta u^r\partial_rb_{1}\|_{L^{\frac{6p}{6-p}}}+\|\delta u^z\partial_zb_{1}\|_{L^{\frac{6p}{6-p}}}\\
&\lesssim\|\delta u^r\|_{L^{\frac{3p}{3-2p}}}\|\partial_rb_{1}\|_{L^2}+\|\delta u^z\|_{L^{\frac{3p}{3-p}}}\|\partial_zb_{1}\|_{L^6}.
  \end{split}
\end{equation}
Form Proposition \ref{prop2.2}, we known that, for ${6\over5}\le p<{3\over2}$, 
\begin{equation*}\|\delta u^r\|_{L^{\frac{3p}{3-2p}}}\lesssim\|\partial_z\delta \omega\|_{L^p}, \quad \|\delta u^{z}\|_{L^{\frac{3p}{3-p}}}\lesssim\|\delta\omega\|_{L^{p}},
\end{equation*}
which along with \eqref{u-b1-1-1} implies
\begin{equation*}
  \begin{split}
\|(\delta u\cdot\nabla)b_{1}\|_{L^{\frac{6p}{6-p}}}\lesssim \|\partial_z\delta \omega\|_{L^p}\|\partial_rb_{1}\|_{L^2}+\|\delta \omega\|_{L^p}\|\partial_z\nabla\,b_{1}\|_{L^2}.
  \end{split}
\end{equation*}
For the last term of \eqref{4.3}, again using the above embedding inequality, we obtain
$$\|\frac{\delta u^{r}}{r}b_{1}\|_{L^{\frac{6p}{6-p}}}\lesssim\|\delta u^{r}\|_{L^{\frac{3p}{3-2p}}}\|\frac{b_1}{r}\|_{L^2}\lesssim
\|\partial_z\delta\omega\|_{L^{p}}\|\frac{b_1}{r}\|_{L^2}$$
and then, we arrive at
\begin{equation*}
  \begin{split}
\frac{\mathrm{d}}{\mathrm{d}t}\|\delta b\|_{L^{\frac{6p}{6-p}}}^{\frac{6p}{6-p}}+
\Big\|\partial_{z}|\delta b|^{\frac{3p}{6-p}}\Big\|_{L^{2}}^{2}\lesssim
\bigl(&\|\partial_z\delta \omega\|_{L^p}\|\partial_rb_{1}\|_{L^2}+\|\delta \omega\|_{L^p}\|\partial_z\nabla\,b_{1}\|_{L^2}
\\&
+\|\frac{u_{2}^{r}}{r}\|_{L^{\infty}}\|\delta b\|_{L^{\frac{6p}{6-p}}}+\|\partial_z\delta\omega\|_{L^{p}}\|\frac{b_1}{r}\|_{L^2}\bigr)
\|\delta b\|_{L^{\frac{6p}{6-p}}}^{\frac{7p-6}{6-p}}.
  \end{split}
  \end{equation*}
Hence, we have 
  \begin{equation}\label{4.2-22}
  \begin{split}
&\|\delta b\|_{L^\infty_t(L^{\frac{6p}{6-p}})}^{\frac{6p}{6-p}}+
\Big\|\partial_{z}|\delta b|^{\frac{3p}{6-p}}\Big\|_{L^2_t(L^{2})}^{2} \leq C\|{r}^{-1}{u_{2}^{r}}\|_{L^1_t(L^{\infty})}\|\delta b\|_{L^\infty_t(L^{\frac{6p}{6-p}})}^{\frac{6p}{6-p}}\\
&\quad+
 C(\|(\partial_rb_{1}, \frac{b_1}{r})\|_{L^2_t(L^2)}+\|\partial_z\nabla\,b_{1}\|_{L^1_t(L^2)})(\|\partial_z\delta \omega\|_{L^2_t(L^p)}+\|\delta \omega\|_{L^\infty_t(L^p)})\|\delta b\|_{L^\infty_t(L^{\frac{6p}{6-p}})}^{\frac{7p-6}{6-p}}.
  \end{split}
  \end{equation}
 Substituting  \eqref{4.21-34e} into \eqref{4.2-22} gives rise to
   \begin{equation*}
  \begin{split}
&\|\delta b\|_{L^\infty_t(L^{\frac{6p}{6-p}})}^{\frac{6p}{6-p}}+
\Big\|\partial_{z}|\delta b|^{\frac{3p}{6-p}}\Big\|_{L^2_t(L^{2})}^{2}\leq C\|{r}^{-1}{u_{2}^{r}}\|_{L^1_t(L^{\infty})}\|\delta b\|_{L^\infty_t(L^{\frac{6p}{6-p}})}^{\frac{6p}{6-p}}\\
&\quad+
 C\mathcal{F}_4(t) \bigg( \mathcal{F}_2(t)\|\partial_z|\delta b|^{\frac{3p}{6-p}}\|_{L^2_t(L^{2})}\|\delta b\|_{L^\infty_t(L^{\frac{6p}{6-p}})}^{1-\frac{3p}{6-p}}+\mathcal{F}_3(t) \|\delta b\|_{L^\infty_t(L^{\frac{6p}{6-p}})}\bigg)\|\delta b\|_{L^\infty_t(L^{\frac{6p}{6-p}})}^{\frac{7p-6}{6-p}}
  \end{split}
  \end{equation*}
  with
\begin{equation}\label{def-f4}
  \begin{split}
&\mathcal{F}_4(t)\eqdefa \|(\partial_rb_{1}, \frac{b_1}{r})\|_{L^2_t(L^2)}+\|\partial_z\nabla\,b_{1}\|_{L^1_t(L^2)},
  \end{split}
  \end{equation}
  which along with Young's inequality implies  
  \begin{equation*}
  \begin{split}
&\|\delta b\|_{L^\infty_t(L^{\frac{6p}{6-p}})}^{\frac{6p}{6-p}}+
\Big\|\partial_{z}|\delta b|^{\frac{3p}{6-p}}\Big\|_{L^2_t(L^{2})}^{2} \leq C \mathcal{F}_5(t)\|\delta b\|_{L^\infty_t(L^{\frac{6p}{6-p}})}^{\frac{6p}{6-p}}  \end{split}
  \end{equation*}
 with
\begin{equation}\label{def-f5}
  \begin{split}
&\mathcal{F}_5(t)\eqdefa \|{r}^{-1}{u_{2}^{r}}\|_{L^1_t(L^{\infty})}+\mathcal{F}_3(t)\mathcal{F}_4(t)+(\mathcal{F}_2(t)\mathcal{F}_4(t))^2.
  \end{split}
  \end{equation}
Notice that $\mathcal{F}_5(t)\rightarrow 0$ as $t\rightarrow 0+$, so we get that, there exists $\epsilon_0 \in (0, \epsilon_1)$ so small that, if $t\in [0, \epsilon_0]$, there holds
       \begin{equation*} 
  \begin{split}
&\|\delta b\|_{L^\infty_t(L^{\frac{6p}{6-p}})}^{\frac{6p}{6-p}}+
\Big\|\partial_{z}|\delta b|^{\frac{3p}{6-p}}\Big\|_{L^2_t(L^{2})}^{2} =0,
 \end{split}
  \end{equation*}
  which immediately follows from \eqref{4.21-34d} and \eqref{psmall-1} that
  \begin{equation*} 
  \begin{split}
  \|\delta\omega\|_{L^\infty_t(L^p)}=0.
 \end{split}
  \end{equation*}
  Therefore, we obtain $\delta\,b(t)=\delta\omega(t)\equiv 0$ for any $t\in [0, \epsilon_0]$.
 The uniqueness of such strong solutions on the whole time interval $[0, +\infty)$ then follows by a bootstrap argument.

Moreover, the continuity with respect to the initial data may also be obtained by the same argument in the proof of the uniqueness, which ends the proof of Theorem \ref{thm1.1}.

\bigbreak \noindent {\bf Acknowledgments.} G. Gui's research is supported in part by the National Natural Science Foundation of China under Grants 12371211 and 12126359.

\end{document}